\newcommand{\ans}[1]{
\fbox{\begin{minipage}{36em}
{\color{black}#1}\end{minipage}} \\
}
\newcommand{\e}{\epsilon}
\newtheorem{thm}{Theorem}[section]
\newtheorem{cor}[thm]{Corollary}
\newtheorem{lem}[thm]{Lemma}
\newtheorem{prop}[thm]{Proposition}
\theoremstyle{definition}
\newtheorem{defn}[thm]{Definition}
\def\R{{\mathbb R}} % Reals
\def\A{{\mathbb A}} % Affine Space
\def\Z{{\mathbb Z}} % Integers
\title{Ergodicity and Algebraticity  of the Fast and Slow Triangle Maps}
\author{Thomas Garrity\footnote{Department of Mathematics and Statistics, Williams College, Williamstown, MA 01267, USA. Email: tgarrity@williams.edu} \and Jacob Lehmann Duke\footnote{Department of Mathematics, Dartmouth College, Hanover, NH 03755, USA. Email:jacob.lehmann.duke.gr@dartmouth.edu} }
\date{}
\begin{document}
\maketitle

\begin{abstract}
Our goal is to show that both the fast and slow versions of the triangle map (a type of multi-dimensional continued fraction algorithm)  in dimension $n$ are ergodic, resolving a conjecture of Messaoudi, Noguiera and Schweiger \cite{Mes} .  This particular type of  higher dimensional multi-dimensional continued fraction algorithm has recently been linked to the study of partition numbers, with the result that the underlying dynamics has combinatorial implications.

\bigskip\noindent 
MSC2020: 11K55, 11J70, 11R04, 28D99 \\
Keywords: \end{abstract}

%%%%%%%%%%%%%%%%%%%%%%%%%%%%%%%%%%%%%%%%%%%%%%%
%%%%%%%%%%%%%%%%%%INTRODUCTION%%%%%%%%%%%%%%%%%%%
%%%%%%%%%%%%%%%%%%%%%%%%%%%%%%%%%%%%%%%%%%%%%%%
\section{Introduction}\label{Introduction} The triangle map is a special type of multi-dimensional continued fraction algorithm.  Multidimensional continued fraction algorithms, of which there are many, are, in general, attempts to generalize the many wonderful properties of traditional continued fractions, such as for finding excellent rational approximations for a vector of real numbers and as for describing algebraic numbers via periodic sequences of integers (the Hermite Problem). (For more general information about multi-dimensional continued fractions, see Schweiger \cite{Sch} or Karpenkov \cite{Karpenkov 1}.)  Now, traditional continued fractions stem from the Gauss map, a map from the unit interval to itself.  Gauss realized that understanding the dynamics of this map is important.  Not surprisingly, once one has a multi-dimensional continued fraction algorithm, its associated dynamics is almost immediately studied. In 2009, 
Messaoudi, Nogueira, and Schweiger \cite{Mes} showed that the triangle map in dimension two is ergodic, and posed the problem of showing that higher dimensional analogs would also be ergodic.  Proving this conjecture is the main goal of this paper.  

In \cite{Mes}, the authors state in the abstract that ``As far as we know, it is the first example of a 2-dimensional algorithm where a surprising diophantine phenomenon happens: there are sequences of nested cells whose intersections is a segment, although no vertex is fixed.'' We strongly conjecture  the analog holds for the triangle map in general, meaning that for  the $n$-dimensional algorithm, there are sequences of nested cells whose intersections  are various simplices with possible dimension ranging from $1$ to $n-1$, still with  no vertex fixed.  

There is another motivating reason to study the dynamics of the triangle map. Recently in \cite{BBDGI,BG}, the triangle map has been shown to be linked with the classical combinatorial subject of partition numbers, unlike all other known multi-dimensional continued fraction algorithms.  Further, there is strong evidence that the underlying dynamics of the triangle map shapes the structure on the space of all partitions.  That was the prime motivation for us to try to show ergodicity in higher dimensions.

Our overall approach for showing that the triangle map is ergodic follows the outline of \cite{Mes}.  The key in the work of Messaoudi, Nogueira, and Schweiger is in showing that a certain nested sequence of cells converge to a single point, almost everywhere.  As standard in the rhetoric of these types of arguments, they show that for the $n=2$ triangle map, whenever infinite many of the terms in the triangle sequence are zero, then there is weak convergence.  Then they show that infinitely many of the terms of the triangle sequence are zero almost everywhere.  This is no longer a strong enough condition in general.  In Section \ref{weak convergence section} we show that to have  weak convergence  we need  infinitely many blocks of length $n-1$ in the triangle sequence to be bounded by a constant. Then in Section \ref{almost everywhere weak convergence}, we show that this happens almost everywhere.  Key are two combinatorial identities, shown in Subsection \ref{First Combinatorial Identity} and Subsection \ref{Second Combinatorial Identity}.  We view the need and the proof of these  two identities are what makes our proof of ergodicity not straightforward from the work in \cite{Mes}..

In Section \ref{basics triangle}, we review the basics of the triangle map. In Subsection \ref{Non-homgeneous Triangle Map}, we give the original definition of the triangle map (at least for dimension two), which is the version needed for partition theory.  In Subsection \ref{Homogeneous Triangle Map}, we give another version of the triangle map, switching the map from an iteration of an $n$ dimensional simplex to itself to a map that is the iteration of a cone over an $n$ dimensional simplex to itself.  This is a standard type of move in multi-dimensional continued fractions, allowing us to translate the maps to the language of $(n+1) \times (n+1)$ matrices.
In Subsection \ref{The sliced non-homogeous triangle map}, we look at the version of the triangle map used in \cite{Mes}.  This version is not directly useful in the study of partition numbers but is the version best suited for proofs of ergodicity. 

In Section \ref{weak convergence section}, we discuss what it means for an algorithm to weakly converge at a given point.  It is in this section that the behavior of the algorithm on the vertices of subcells is worked out.   Section \ref{almost everywhere weak convergence} contains the technical heart of the paper, and is what makes this paper not  merely  a straightforward generalization of \cite{Mes}.  Here we show that the triangle map converges weakly almost everywhere.  Key is a first combinatorial identity in Subsection \ref{First Combinatorial Identity} and a second combinatorial identity in  Subsection \ref{Second Combinatorial Identity}.

The goal of the first paper on the triangle map \cite{Gar} was in showing (in the dimension three case) that if the triangle sequence for a point $(\alpha_1,\alpha_2)$ is eventually periodic, then $\alpha_1$ and $\alpha_2$ are in the same number field of degree less than or equal to three.  With the work done in Section \ref{weak convergence section}, we show in Section \ref{algebratiticity} that if the triangle sequence for a point $(\alpha_1, \ldots, \alpha_n)$ is eventually periodic, then $\alpha_1, \ldots, \alpha_n$ are in the same number field of degree less than or equal to $n$.

In Section \ref{ergodic proof},  we use our earlier work to prove that the triangle map in any dimension is ergodic.  In Section \ref{fast invariant measure}, we find the invariant measure for the triangle map in each dimension.

Traditional continued fractions can be studied either via the Gauss map or by the Farey map.  The Gauss map is often also called the multiplicative version or the fast version of the map.  The Farey map can go by the name of the additive version or the slow version.  There is a natural interplay between the Gauss map and the Farey map.  The triangle map defined in Section \ref{basics triangle} is a direct generalization of the Gauss map.In Section \ref{slow version}, we look at the slow version (or the additive version or the Farey version, if you prefer) of the triangle map.  It is this slow version that is best suited to study partition numbers.  Hence this section, in which we prove that the slow version is also ergodic.

In Section \ref{generalization}, we quickly discuss why the generalization of the triangle map given in this paper is ``more correct'' than the version in \cite{Gar, Mes}.  Finally, in Section \ref{Conclusions}, we discuss further questions.

We would like to thank J. Fox, L. Pedersen and C. Silva for helpful conversations. 

\section{Background on the Triangle Map}\label{basics triangle}

The triangle map is a generalization of classical continued fractions. The $n=2$ case was   originally described in \cite{Gar, Ass}, where the concern was the number-theoretic  Hermite problem.    As mentioned in the introduction, Messaoudi,  Nogueira, and  Schweiger \cite{Mes} showed that the $n=2$  map is ergodic.  Our main goal is to generalize this to higher dimensions.   As mentioned in \cite{BBDGI}, ``further dynamical properties were discovered by  Berth\'e, Steiner and Thuswaldner \cite{Berthe-Steiner-Thuswaldner}  and by Fougeron and  Skripchenko \cite{Fougeron-Skripchenko}.   Bonanno, Del Vigna and  Munday \cite {Bonanno- Del Vigna-Munday} and Bonanno and Del Vigna \cite{Bonanno-Del Vigna} recently used the $\R^3$ slow  triangle map to develop a tree structure of rational pairs in the plane.   In a recent preprint, Ito \cite{Ito} showed that the fast  map is self-dual (in section three of that paper).  These papers are all primarily motivated by questions from dynamics.  For general background in multidimensional continued fraction algorithms, see Karpenkov \cite{Karpenkov 1} and Schweiger \cite{Sch}'' .

  \subsection{Non-homgeneous Triangle Map}\label{Non-homgeneous Triangle Map}
  Fix the simplex
  $$\triangle = \{ (x_1, \ldots, x_n) \in \R^n:1\geq x_1 \cdots \geq x_n\geq 0\}.$$
 Partition this simplex into sub-simplices
  
 \begin{eqnarray*}\triangle_b 
 &=&  \{ {\bf x}\in \triangle: 1-x_1 -bx_n \geq 0 >1-x_1- (b+1) x_n \} 
 \end{eqnarray*}
 Here 
 $$b=\left\lfloor \frac{1-x_1}{x_n} \right\rfloor$$
 
 \begin{defn}\label{triangle map} The triangle map
 $$T:\triangle \rightarrow \triangle$$ 
 is defined as 
$$T(x_1, \ldots, x_n) = \left(\frac{x_2}{x_1}, \ldots , \frac{x_n}{x_1}, \frac{1-x_1-b x_n}{x_1}\right)$$
for any $(x_1, \ldots , x_n) \in \triangle_b.$
 \end{defn}
  
 When $n=1$, this is the classical Gauss map.

 \begin{defn} The triangle sequence  for any  ${\bf x}\in \triangle$ is a sequence 
 $$(b_0,b_1, \ldots)$$
 of non-negative integers such that 
 $${\bf x }\in \triangle_{b_0}, T({\bf x}) \in \triangle_{b_1}, T^{(2)}({\bf x}) \in \triangle_{b_2}, \ldots .$$
 
 \end{defn}
 For any $x\in [0,1]$, its triangle sequence encodes  the classical continued fraction expansion of $x$.

 In turn, this suggests

 \begin{defn} For any sequence of non-negative integers   
 $$(b_0,b_1, \ldots),$$
 the corresponding cylinder is 
 $$\triangle (b_0, \ldots, b_k)= \{{\bf x } \in \triangle_{b_0}: T({\bf x}) \in \triangle_{b_1}, T^{(2)}({\bf x}) \in \triangle_{b_2}, \ldots, T^{(k)}({\bf x}) \in \triangle_{b_k}\}.$$

 \end{defn}

\subsection{Homogeneous Triangle Map}\label{Homogeneous Triangle Map}
Here we start with the cone
$$\triangle^H = \{(x_0, x_1, \ldots, x_n)\in \R^{n+1} : x_0 >x_1 > \cdots > x_n>0\}.$$
The superscript $H$ stands for ``homogeneous.''
We partition this cone into sub-cones
  
 \begin{eqnarray*}\triangle_b^H 
 &=&  \{ {\bf x}\in \triangle: x_0-x_1 -bx_n \geq 0 >x_0-x_1- (b+1) x_n \} 
 \end{eqnarray*}

Then 

 \begin{defn} The (homogeneous) triangle map
 $$T:\triangle^H \rightarrow \triangle^H$$ 
 is defined as 
$$T(x_0, \ldots, x_n) =(x_1, x_2, \ldots, x_n, x_0 - x_1 - bx_n)$$
for any $(x_0, \ldots , x_n) \in \triangle_b^H.$
 \end{defn}
 When we need to distinguish this map $T$ from the $T$ in the previous Subsection \ref{Non-homgeneous Triangle Map}, we will use $T^H$.
 
 If we write each ${\bf x}=(x_0, \ldots, x_n) $ as a column vector, we can describe the map $T$ via  matrix 
 multiplication:
 
$$ \begin{array} {ccccc}
 T({\bf x} ) &=& T_b\left(  \begin{array}{c} x_0 \\ \vdots \\ x_n \end{array}  \right) &\mbox{if} & {\bf x} \in \triangle_b^H\\
  &=& \left(  \begin{array}{cccccc}   0&1&0&\cdots &0 &0  \\
  0&0&1& \cdots &0&0 \\
   & & \vdots & & &  \\
   0&0&0& \cdots &0& 1 \\
   1 & -1 & 0 &\cdots &0& -b \end{array}  \right)   \left(  \begin{array}{c} x_0 \\ \vdots \\ x_n \end{array}  \right) &\mbox{if} & {\bf x} \in \triangle_b^H\\
 &=&    \left(  \begin{array}{c} x_1 \\ x_2 \\ \vdots \\ x_n \\ x_0-x_1-bx_n \end{array}  \right) & & 
 \end{array}$$

The link between the homogeneous and the non-homogeneous triangle maps is the following.  We have a projection
$$P:\triangle^H \rightarrow \triangle$$
given by 
$$P(x_0, \ldots , x_n) =\left( \frac{x_1}{x_0}, \ldots, \frac{x_n}{x_0} \right).$$
It is straightforward to show that $P:\triangle_b^H \rightarrow \triangle_b$ is onto.  What is key is that the following diagram is commutative:

$$\begin{CD}
\triangle^H @>T>>  \triangle^H \\
@VVPV   @VVPV \\ \triangle @ >T>> \triangle
\end{CD}$$

We have the embedding 
$$i:\triangle \rightarrow \triangle^H$$
given by
$$i(x_1, \ldots, x_n) =( 1, x_1, x_2, \ldots, x_n).$$
This allows us to translate calculations for the non-homogeneous triangle map into calculations for the homogeneous triangle map, which in turn allows us to use simple linear algebraic tools. 

Finally, we have 

 \begin{defn} The triangle sequence  for any  ${\bf x}\in \triangle^H$ is a sequence 
 $$(b_0,b_1, \ldots)$$
 of non-negative integers such that 
 $${\bf x }\in \triangle_{b_0}, T({\bf x}) \in \triangle_{b_1}, T^{(2)}({\bf x}) \in \triangle_{b_2}, \ldots .$$
 
 \end{defn}

 \begin{defn} For any sequence of non-negative integers   
 $$(b_0,b_1, \ldots),$$
 the corresponding cylinder is 
 $$\triangle^H (b_0, \ldots, b_k)= \{{\bf x } \in \triangle_{b_0}^H: T({\bf x}) \in \triangle_{b_1}^H, T^{(2)}({\bf x}) \in \triangle_{b_2}^H, \ldots, T^{(k)}({\bf x}) \in \triangle_{b_k}^H\}$$

 \end{defn}

\subsection{The sliced non-homogeous triangle map}\label{The sliced non-homogeous triangle map}
(While all of this is standard, our nomenclature of using the term``slice'' is not.)
For any vector 
${\bf x} = (x_0, x_1, \ldots, x_n)$, denote its $\ell_1$ norm as 
$$||{\bf x } || = |x_0| + \ldots + |x_n|.$$

Set 
$$\triangle^S = \{ { \bf x}\in \triangle^H : ||{\bf x}||=1\} .$$
(We are using the superscript ``S'' for slice, as $\triangle^S = \triangle^H \cap \{||{\bf x}|| = 1\}$.)
This is an n-dimensional simplex.  We have the natural projection
$$P^S:\triangle^H \rightarrow \triangle^S$$
given by 
$$P^S({\bf x}) = \frac{{\bf x}}{ ||{\bf x}||}.$$
We set 
$$\triangle_b^S= \{ {\bf x} \in \triangle^S \cap \triangle^H\}.$$

 \begin{defn} The sliced non-homogeous triangle map
 $$T:\triangle^S \rightarrow \triangle^S$$
 is
 $$T({\bf x})  =  \frac{T^H({\bf x})}{||T^H({\bf x})||},$$
 where the map $T^H$ on the right-hand-side is the homogeneous triangle map.
 \end{defn}
 When we need to distinguish this map $T$ from the $T$ in the previous Subsection \ref{Non-homgeneous Triangle Map} or from $T^H$, we will use $T^S$.

 \begin{defn}\label{triangle sequence} The triangle sequence  for any  ${\bf x}\in \triangle^S$ is a sequence 
 $$(b_0,b_1, \ldots)$$
 of non-negative integers such that 
 $${\bf x }\in \triangle_{b_0}^S, T({\bf x}) \in \triangle_{b_1}^S, T^{(2)}({\bf x}) \in \triangle_{b_2}^S, \ldots .$$
 
 \end{defn}

 The composition of 
 $$ P^S\circ i:\triangle \rightarrow \triangle^S$$
 gives us 
 \begin{eqnarray*}
     (x_1,x_2 \ldots) & \xrightarrow{i} & (1, x_1, x_2, \ldots)\\
     &\xrightarrow{P^S}& \left( \frac{1}{1+x_1+ \ldots + x_n},\frac{x_1}{1+x_1+ \ldots + x_n}, \ldots, \frac{x_n}{1+x_1+ \ldots + x_n}  \right)
 \end{eqnarray*}

 \begin{defn} For any sequence of non-negative integers   
 $$(b_0,b_1, \ldots),$$
 the corresponding cylinder is 
 $$\triangle^S (b_0, \ldots, b_k)= \{{\bf x } \in \triangle_{b_0}^S: T({\bf x}) \in \triangle_{b_1}^S, T^{(2)}({\bf x}) \in \triangle_{b_2}^S, \ldots, T^{(k)}({\bf x}) \in \triangle_{b_k}^S\}$$

 \end{defn}
 
We have 
 \begin{lem}\label{1-1} For all non-negative integers $b_0, b_1, b_2$, the map 
 $$\triangle(b_0, b_1, \ldots, b_k) \xrightarrow{P^S\circ i}\triangle(b_0, b_1, \ldots, b_k)  $$
 is a continuous, one-to-one, onto map.
 \end{lem}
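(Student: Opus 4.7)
The plan is to first show that $P^S \circ i$ is a homeomorphism from the full simplex $\triangle$ to $\triangle^S$, and then use the commutative diagram relating the three versions of the triangle map to conclude that this homeomorphism takes each cylinder $\triangle(b_0,\ldots,b_k)$ bijectively onto the corresponding cylinder $\triangle^S(b_0,\ldots,b_k)$.

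For the first step, the displayed formula preceding the lemma gives $P^S \circ i$ explicitly as $(x_1,\ldots,x_n) \mapsto \tfrac{1}{s}(1, x_1, \ldots, x_n)$, where $s = 1 + x_1 + \cdots + x_n \geq 1$. Continuity is immediate from this formula. Injectivity is also immediate: the zeroth coordinate of the image determines $s$, and then the remaining coordinates determine each $x_i$. For surjectivity I would exhibit the two-sided inverse explicitly as $\pi : \triangle^S \to \triangle$, $\pi(z_0,\ldots,z_n) = (z_1/z_0, \ldots, z_n/z_0)$; this is simply the restriction of the map $P$ from Subsection~\ref{Homogeneous Triangle Map} to $\triangle^S$, and is well-defined because $z_0 > 0$ there. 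Direct substitution verifies that $\pi \circ (P^S \circ i)$ and $(P^S \circ i)\circ \pi$ are both the identity (using $1+\sum x_i = 1/z_0$ in the second identity).

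For the second step, I would show that $P^S \circ i$ conjugates $T$ to $T^S$. Since $\pi = P|_{\triangle^S}$ is scale-invariant, the sliced map satisfies $\pi\bigl(T^S(\mathbf{x})\bigr) = \pi\bigl(T^H(\mathbf{x})/\|T^H(\mathbf{x})\|\bigr) = \pi\bigl(T^H(\mathbf{x})\bigr)$, and the commutative square displayed in Subsection~\ref{Homogeneous Triangle Map} then gives $\pi(T^H(\mathbf{x})) = T(\pi(\mathbf{x}))$. Hence $(P^S \circ i) \circ T = T^S \circ (P^S \circ i)$. Combined with the base-case identity $(P^S \circ i)(\triangle_b) = \triangle^S_b$, an easy induction on $k$ then shows that $\mathbf{x} \in \triangle(b_0,\ldots,b_k)$ if and only if $(P^S \circ i)(\mathbf{x}) \in \triangle^S(b_0,\ldots,b_k)$.

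All that remains is the base case, which is a one-line computation: writing $\mathbf{y} = (P^S\circ i)(\mathbf{x})$, one has $y_0 - y_1 - b y_n = (1 - x_1 - b x_n)/s$, which has the same sign as $1 - x_1 - b x_n$, so the two inequalities defining $\triangle_b$ match those defining $\triangle^S_b$ exactly. The only point of caution is the mild mismatch between the weak inequalities defining $\triangle$ and the strict inequalities defining $\triangle^H$ (hence $\triangle^S$), but this concerns only a measure-zero boundary and does not affect the cylinder identification, so I would not expect any genuine obstacle.
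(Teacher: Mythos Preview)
Your argument is correct. The paper itself does not give a proof of this lemma; it is simply stated and then used. So there is nothing to compare against in terms of approach---you have supplied exactly the routine verification the authors chose to omit. Your two steps (exhibiting the explicit inverse $\pi = P|_{\triangle^S}$ to get a global homeomorphism, then using the commutative square to pass to cylinders by induction) are the natural way to do it, and your handling of the base case via the sign identity $y_0 - y_1 - b y_n = (1 - x_1 - b x_n)/s$ is clean. Your closing caveat about the strict-versus-weak inequality mismatch on the boundary is accurate and is indeed only a measure-zero nuisance; in fact the cylinder conditions $1 - x_1 - (b+1)x_n < 0$ already force $x_n > 0$, so the mismatch does not even arise on the cylinders themselves.
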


\section{Weak Convergence} \label{weak convergence section}
This is the start of the key technical results for this paper.  From our theorem about when the higher dimensional triangle map does and does not exhibit weak convergence, we will be able to show, via fairly standard arguments, that the triangle map is ergodic and that eventual periodicity implies degree n irrationality. 

 \subsection{Definition of weak convergence}

 \begin{defn} Let ${\bf x} \in \triangle$ have triangle sequence $(b_0, b_1, \ldots).$ We say the the triangle map weakly converges to  ${\bf x}$ if 
 
 $$\lim_{m\rightarrow \infty} \cap_{k=0}^m \triangle(b_0, \ldots, b_m) = \{{\bf x}\}.$$

 \end{defn}
 
 By lemma \ref{1-1}, we have that the triangle map will weakly converge to ${\bf x} \in \triangle$  precisely when
 $$\lim_{m\rightarrow \infty} \cap_{k=0}^m \triangle^S(b_0, \ldots, b_m) = \{P^S\circ({\bf x})\}.$$
 We will be doing most of our work on convergence in the world of $\triangle^S$ and $\triangle^H.$
 
 In the $n=1$, and hence the classical continued fraction case, there is weak convergence for all ${\bf x}.$  As discussed in \cite{Gar, Ass, Mes}, this is not the case for $n=2$, when the limit $\lim{m\rightarrow \infty} \cap_{k=0}^m \triangle(b_0, \ldots, b_m) $ can be an entire line segment.  But the key technical part of the work of Messaoudi, Nogueira and Schweiger was their Theorem 1.1, where they show that weak convergence does happen if  infinitely many of the $b_k$ we have $b_k=0$.  Coupled with their Theorem 1.2, which states that  almost every point ${\bf x}\in \triangle$, with respect to Lebesgue measure, has infinitely many of their $b_k=0$,  allows them to prove ergodicity in the $n=2$ case.
 
 For higher dimensions,  the convergence of $\lim_{m\rightarrow \infty} \cap_{k=0}^m \triangle(b_0, \ldots, b_m) $ appears that it can be any simplex up to dimension $n-1$.  Further, just requiring infinitely many of the $b_k=0$ is no longer  a strong enough condition  to guarantee weak convergence. 
 
 One of our  main technical goals is 
 
\begin{thm}
\label{weakConvThm}
In the dimension $n$ case, let 
   $u \in \triangle^S$. If at infinitely many points in the triangle sequence for $u$, we have $n-1$ zeros in a row, then the triangle sequence is weakly convergent at $u$.
\end{thm}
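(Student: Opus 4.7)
The plan is to work in the slice picture $\triangle^S$ (equivalent by Lemma~\ref{1-1}) and reduce weak convergence to a linear-algebraic contraction statement for products of inverse transfer matrices. The cylinder $\triangle^H(b_0,\ldots,b_{m-1})$ is exactly the image $M_m(\triangle^H)$ under $M_m = T_{b_0}^{-1} T_{b_1}^{-1} \cdots T_{b_{m-1}}^{-1}$, so its $n+1$ extreme rays are $M_m v_1, \ldots, M_m v_{n+1}$, where $v_i = e_0 + e_1 + \cdots + e_{i-1}$ are the extreme rays of $\triangle^H$. Projecting to the slice, $\triangle^S(b_0, \ldots, b_{m-1})$ is the simplex spanned by the normalized partial-column sums $M_m v_i / \|M_m v_i\|_1$, so weak convergence at $u$ is equivalent to the $\ell^1$-diameter of this simplex tending to zero. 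Since each $M_m v_i$ is a partial sum of the columns of $M_m$, this reduces to proving that the normalized columns $c_j^{(m)}/\|c_j^{(m)}\|_1$ all converge to a single common direction, which must then be $P^S\circ i(u)$.

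A direct computation from $M_{m+1} = M_m T_{b_m}^{-1}$ gives the column update rule
\begin{align*}
  c_0 &\mapsto c_0 + c_1, & c_i &\mapsto c_{i+1}\ \ (1 \le i \le n-2), \\
  c_{n-1} &\mapsto b_m c_0 + c_n, & c_n &\mapsto c_0.
\end{align*}
When $b_m = 0$ no multiple of $c_0$ is injected into $c_{n-1}$, and iterating this update $n-1$ consecutive times yields a clean ``block formula'': if $(c_0, \ldots, c_n)$ are the columns just before the block, then just after the block they become
$c_0+c_1+\cdots+c_{n-1},\ c_n,\ c_0,\ c_0+c_1,\ c_0+c_1+c_2,\ \ldots,\ c_0+c_1+\cdots+c_{n-2}$.
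Thus, apart from the single entry $c_n$ inherited in position~$1$, all new columns are initial partial sums of the old $c_0, c_1, \ldots, c_{n-1}$; successive differences $c_{k+1}^{\mathrm{new}}-c_k^{\mathrm{new}}$ recover the old $c_{k-1}$, and $c_0^{\mathrm{new}}-c_n^{\mathrm{new}}=c_{n-1}$. This highly correlated output structure is the algebraic signature of an $n{-}1$-zero block, and it is what must be converted into a projective contraction.

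The heart of the proof is to show that wherever an $n{-}1$-zero block occurs, it forces a uniform projective contraction of the column spread, independently of the surrounding digits. To this end I would introduce a projective pseudometric $\rho$ on the cone of admissible column tuples $(c_0, \ldots, c_n)$ -- adapted from a Hilbert-type metric but tailored to handle vanishing entries -- such that (i) every single $T_b^{-1}$ is $\rho$-nonexpansive on the tuples arising in our products, regardless of the size of $b$, and (ii) every $n{-}1$-zero block is a strict contraction by a factor $\lambda_n < 1$ depending only on the dimension. Granted (i) and (ii), the hypothesis that the triangle sequence of $u$ contains infinitely many $n{-}1$-zero blocks forces the column spread to decay geometrically along the corresponding subsequence; by monotonicity of the nested cylinders the $\ell^1$-diameter of $\triangle^S(b_0,\ldots,b_m)$ then tends to zero in full, so $\bigcap_m \triangle^S(b_0, \ldots, b_m) = \{P^S\circ i(u)\}$, which is weak convergence at $u$.

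The main obstacle is step (ii). The matrices $T_b^{-1}$, and even the product $T_0^{-(n-1)}$, are never strictly positive (several columns contain explicit zeros), so Birkhoff's classical projective contraction theorem does not apply off the shelf. Moreover a single large digit $b$ can inflate one column's $\ell^1$ norm arbitrarily while leaving the rest small, so naive Euclidean or $\ell^1$ distances between normalized columns can fail to be nonexpansive. The real technical content of the theorem is identifying the right projective invariant of a column tuple -- a combinatorial quantity measuring how close the columns are to having a common direction -- that is nonexpansive under every $T_b^{-1}$ while being strictly shrunk by the ``initial partial sum'' structure that the block formula above pins down as the signature of $n-1$ consecutive zeros.
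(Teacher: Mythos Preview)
Your column recursion and block formula are correct, but the proposal is not a proof: you explicitly leave the heart of the argument---the uniform projective contraction forced by an $(n{-}1)$-zero block---as an unresolved ``main obstacle.'' Proposing to build a bespoke Hilbert-type pseudometric that is nonexpansive under every $T_b^{-1}$ and strictly contracted by the block is a strategy, not a proof, and you yourself note that the matrices are not strictly positive, so there is no off-the-shelf theorem to invoke. As it stands, the proposal contains the setup but none of the analysis.

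The paper avoids this difficulty entirely with an elementary device you are missing. Work not with the raw columns $c_j$ but with the vertices $A_i(m)$ of the cylinder, which are the partial column sums $c_0+\cdots+c_i$; and track \emph{one extra step} before the $(n{-}1)$-zero block, so that you compare $\triangle^H(b_0,\ldots,b_m)$ with $\triangle^H(b_0,\ldots,b_{m-n})$. The resulting vertex recursion (the paper's Lemma~\ref{recursive formula}) shows that for every $j>i$ the difference $A_j(m)-A_i(m)$ is a nonnegative integer combination of the old vertices $A_0,\ldots,A_n$, hence lies in the old cylinder, so $D\bigl(A_i(m),A_j(m)-A_i(m)\bigr)\le d(m-n)$. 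Combined with the one-line identity
\[
D(U,U+V)=\frac{\|V\|}{\|U+V\|}\,D(U,V),
\]
this gives $D(A_i(m),A_j(m))\le\bigl(1-\|A_i(m)\|/\|A_j(m)\|\bigr)\,d(m-n)$. Finally, the same vertex formulas show $\|A_0(m)\|/\|A_n(m)\|\ge 1/(n+1)$ when the $n-1$ trailing digits are zero (more generally $\ge 1/((B+1)(n+1))$ when they are bounded by $B$), yielding $d(m)\le\frac{n}{n+1}\,d(m-n)$ across each block. No projective metric machinery is needed; the ``right invariant'' you are searching for is simply the $\ell^1$-norm ratio of the smallest and largest vertices, and the extra pre-block step is what makes it uniformly bounded below.
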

This theorem will follow from our true 
 technical goal, which will take some work:
\begin{thm}
\label{}
Let $B$ be a constant. 
In the dimension $n$ case, 
    let $u \in \triangle^S$. If at infinitely many points in the triangle sequence for $u$, we have $n-1$ terms in a row being bounded by the constant $B$, then the triangle sequence is weakly convergent at $u$.
\end{thm}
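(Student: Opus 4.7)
The plan is to reduce weak convergence to an explicit matrix contraction question via the homogeneous coordinates of Section \ref{Homogeneous Triangle Map}, and to extract a uniform projective contraction from each bounded block of length $n-1$.

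By Lemma \ref{1-1}, it suffices to prove weak convergence of the cylinders $\triangle^S(b_0,\ldots,b_{k-1})$. Since each $T_b\colon\triangle_b^H\to\triangle^H$ is a bijection, the cylinder $\triangle^H(b_0,\ldots,b_{k-1})$ equals $N_k(\triangle^H)$ where $N_k=T_{b_0}^{-1}\cdots T_{b_{k-1}}^{-1}$, a simplicial cone whose extreme rays are $N_k v_j$ for $v_j=e_0+e_1+\cdots+e_j$. Changing basis by $V=(v_0|v_1|\cdots|v_n)$ sends $\triangle^H$ to the positive orthant and each $T_b^{-1}$ to the non-negative matrix $A_b:=V^{-1}T_b^{-1}V$ whose action on the standard basis is
$$A_b e_j=e_{j+1}\ (0\le j\le n-2),\quad A_b e_{n-1}=b e_0+e_n,\quad A_b e_n=(b+1)e_0+e_n.$$
Setting $c_j^{(k)}:=(A_{b_0}\cdots A_{b_{k-1}})e_j$, weak convergence at $u$ becomes the statement that the projective classes $[c_j^{(k)}]$ all converge to a common limit.

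The recurrence induced by right-multiplication by $A_{b_k}$ reads $c_j^{(k+1)}=c_{j+1}^{(k)}$ for $0\le j\le n-2$, $c_{n-1}^{(k+1)}=b_k c_0^{(k)}+c_n^{(k)}$, and $c_n^{(k+1)}=(b_k+1)c_0^{(k)}+c_n^{(k)}$. Consequently the columns are entrywise increasing in $j$; the consecutive differences $d_j^{(k)}=c_j^{(k)}-c_{j-1}^{(k)}$ shift one position per step, with $d_j^{(k+1)}=d_{j+1}^{(k)}$ for $1\le j\le n-2$ and $d_n^{(k+1)}=c_0^{(k)}$; and over $n-1$ steps each $d_j$ visits every index. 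My plan is to control the projective diameter of the sliced cylinder by the normalized quantity $\rho^{(k)}:=\max_j \|d_j^{(k)}\|/\|c_n^{(k)}\|$ and to prove two things: (i) a single arbitrary step does not increase $\rho^{(k)}$ beyond a universal multiplicative slack, because any $b_k$-blow-up in $d_{n-1}^{(k+1)}$ is matched by the $b_k$-blow-up of $c_n^{(k+1)}$; and (ii) a block $A_{b_k}\cdots A_{b_{k+n-2}}$ with all $b_{k+i}\le B$ strictly contracts $\rho$ by a factor $\lambda(B)<1$. Statement (ii) should be extracted by writing the block product in closed form — the ``first combinatorial identity'' of Subsection \ref{First Combinatorial Identity} — showing that each output column acquires explicit positive contributions from every input column, with the minimum-to-maximum ratio bounded below by a function of $B$ alone.

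The main obstacle will be the uniform contraction (ii). The difficulty is the feedback loop between the two dominant columns $c_{n-1}^{(k)}$ and $c_n^{(k)}$, both of the same order and coupled through $c_0^{(k)}$; a naive one-step estimate does not close, and this is exactly why the block length must be $n-1$, so that the internal shift completes one full cycle before the feedback step at the top re-engages. Once (i) and (ii) are established, the hypothesis of infinitely many disjoint bounded blocks of length $n-1$ provides an accumulation of contraction factors each at most $\lambda(B)<1$, forcing $\rho^{(k)}\to 0$; the columns $c_j^{(k)}$ therefore coalesce projectively to a common limit, the vertices of the sliced cylinder $\triangle^S(b_0,\ldots,b_{k-1})$ converge to a single point, and pulling back via $P^S\circ i$ gives weak convergence at $u$.
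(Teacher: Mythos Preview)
Your plan has the right shape but a genuine gap in the choice of proxy. The quantity $\rho^{(k)}=\max_j\|d_j^{(k)}\|/\|c_n^{(k)}\|$ is not controlled under a single arbitrary step: from your own recurrence, $d_{n-1}^{(k+1)}=b_k\,c_0^{(k)}+d_n^{(k)}$ while $\|c_n^{(k+1)}\|=(b_k+1)\|c_0^{(k)}\|+\|c_n^{(k)}\|$, so for large $b_k$ the ratio $\|d_{n-1}^{(k+1)}\|/\|c_n^{(k+1)}\|$ tends to $1$ no matter how small $\rho^{(k)}$ was. Thus (i) fails as a multiplicative bound $\rho^{(k+1)}\le C\rho^{(k)}$, and between bounded blocks $\rho$ can reset to nearly~$1$; an infinite sequence of contractions by $\lambda(B)$ applied to a quantity that keeps resetting does not force $\rho^{(k)}\to 0$. (A Hilbert-metric version of your argument would repair this, since non-negative matrices never expand that metric, but that is not what you wrote.)

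The paper sidesteps the issue by working directly with the projective diameter $d(k)=\max_{i,j}D(A_i(k),A_j(k))$ of the sliced cylinder, which is \emph{automatically} non-increasing because the cylinders are nested --- no step-by-step control is needed. The contraction at a bounded block is then elementary: Lemma~\ref{recursive formula} shows that for $i<j$ the difference $A_j(m)-A_i(m)$ lies in $\triangle^H(b_0,\ldots,b_{m-n})$, so $D(A_i(m),A_j(m)-A_i(m))\le d(m-n)$; Lemma~\ref{bounds on vertices} gives $\|A_i(m)\|/\|A_j(m)\|\ge 1/((B+1)(n+1))$; and the identity $D(U,U+V)=\tfrac{\|V\|}{\|U+V\|}D(U,V)$ then yields $d(m)\le\bigl(1-\tfrac{1}{(B+1)(n+1)}\bigr)d(m-n)$. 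Note also that the First Combinatorial Identity of Subsection~\ref{First Combinatorial Identity} plays no role in this theorem --- it is a partial-fractions lemma used only for the Lebesgue-measure estimates of Section~\ref{almost everywhere weak convergence}, a completely separate part of the argument.
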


In Corollary \ref{b_k=0 inf often cor}  we will show that  this happens almost everywhere with respect to Lebesgue measure, which in turn will allow us to finally prove ergodicity.

\subsection{Vertices of the subcells}\label{vertices}

This part is, relatively speaking, a straightforward generalization of section  4 of \cite{Ass} and of the introduction of \cite{ Mes}.
 
Fix a sequence $(b_0, b_1, \ldots)$ of non-negative integers.  Denote the vertices of $\triangle^H(b_0, \ldots, b_m)$ as
$$A_0(m), \ldots, A_n(m).$$
Then the vertices of $\triangle^S(b_0, \ldots, b_m)$ are 
$$ \frac{A_0(m)}{||A_0(m) ||}, \ldots,  \frac{A_n(m)}{||A_n(m) ||}.$$

The goal of Theorem \ref{weakConvThm}  is now equivalent to showing that   the distances between the vertices of $\triangle^S(b_0, \ldots, b_m)$ approach  zero:

\begin{lem} For a sequence $(b_0, b_1, \ldots)$ of non-negative integers, we have that 
$\lim_{m\rightarrow \infty} \cap_{k=0}^m \triangle^S(b_0, \ldots, b_m) $
is a single point if and only if, for all $i$ and $j$, 
$$\lim_{m\rightarrow \infty} \left|  \frac{A_i(m)}{||A_i(m) ||} -   \frac{A_j(m)}{||A_j(m) ||},\right|_2 =0.$$
\end{lem}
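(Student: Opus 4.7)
The plan is to exploit the fact that, passing to closures if necessary (the cells $\triangle_b$ are only semi-closed, but this is harmless), each cylinder $\triangle^S(b_0,\ldots,b_m)$ is the convex hull of its $n+1$ vertices $\frac{A_0(m)}{\|A_0(m)\|},\ldots,\frac{A_n(m)}{\|A_n(m)\|}$. The lemma then reduces to the standard convex-geometric observation that the Euclidean diameter of a convex polytope is attained as the maximum pairwise Euclidean distance between its vertices. Thus the vertex-distance hypothesis of the lemma is equivalent to $\mathrm{diam}\bigl(\triangle^S(b_0,\ldots,b_m)\bigr)\to 0$.

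Granting this reformulation, the ``if'' direction is immediate from the Cantor intersection theorem: the cylinders are nested and, after passing to closures, form a decreasing family of nonempty compact subsets of the compact simplex $\triangle^S$, so diameters tending to zero force the intersection to be a single point. For the ``only if'' direction I would argue by contradiction. Suppose $\bigcap_m \triangle^S(b_0,\ldots,b_m)=\{p\}$ but some pair $i,j$ satisfies $\left|\frac{A_i(m)}{\|A_i(m)\|}-\frac{A_j(m)}{\|A_j(m)\|}\right|_2 \not\to 0$. Choose a subsequence $m_k$ along which this distance is bounded below by some $\varepsilon>0$. Since $\triangle^S$ is compact, after passing to a further subsequence both normalized vertex sequences converge, say to $q_i$ and $q_j$, with $|q_i-q_j|_2\geq\varepsilon$. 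By nestedness, for any fixed $\ell$, every vertex $\frac{A_i(m_k)}{\|A_i(m_k)\|}$ with $m_k\geq\ell$ lies in the closure of $\triangle^S(b_0,\ldots,b_\ell)$, so the limits $q_i,q_j$ both lie in $\bigcap_\ell \overline{\triangle^S(b_0,\ldots,b_\ell)}$. Since this intersection reduces to the single point $p$ (the closures add at most boundary points of a nested family whose interiors already shrink to $p$), we obtain $q_i=q_j=p$, contradicting $|q_i-q_j|_2\geq\varepsilon$.

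There is no serious obstacle here; the lemma is essentially a bookkeeping step translating the geometric statement ``nested simplices shrink to a point'' into a concrete statement about the columns $A_i(m)$, which are obtained by applying products of the matrices $T_{b_k}$ from Subsection \ref{Homogeneous Triangle Map} to the vertices of $\triangle^H$. The only mildly delicate point is handling closures, and this is harmless because the cells are convex semi-algebraic with closures coinciding with the simplex $\mathrm{conv}\bigl(\frac{A_i(m)}{\|A_i(m)\|}\bigr)$. The real work begins afterward, when one must control the vertex distances under the hypothesis of Theorem \ref{weakConvThm} (runs of $n-1$ small entries in the triangle sequence), and that is precisely where the combinatorial identities promised in the introduction will be needed.
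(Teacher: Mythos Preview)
Your argument is correct, and in fact the paper does not supply a proof of this lemma at all: it is stated as an evident reformulation and the text moves directly on to the recursive description of the vertices $A_i(m)$. Your approach---identifying the closure of each cylinder with the convex hull of its normalized vertices, so that the diameter equals the maximum pairwise vertex distance, and then invoking Cantor's intersection theorem for the ``if'' direction and a compactness/subsequence argument for the ``only if'' direction---is exactly the standard way to justify such a statement, and nothing more is needed.

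The one place where you are appropriately careful is the closure issue in the ``only if'' direction. Your argument in fact shows the cleaner statement that for any nested sequence of nonempty compact convex sets $K_m$, the intersection $\bigcap_m K_m$ is a singleton if and only if $\operatorname{diam}(K_m)\to 0$; this is precisely what your subsequence argument establishes once you pass to closures. Since the semi-open cylinders differ from their closures only on faces of Lebesgue measure zero, and since the paper's subsequent use of the lemma (bounding $d(m)=\max_{i,j}D(A_i(m),A_j(m))$ in the proof of Theorem~\ref{weakConvThm}) is entirely in terms of the closed simplices, the distinction is immaterial. Your remark that ``the real work begins afterward'' is exactly right: this lemma is bookkeeping, and the substance lies in Lemma~\ref{bounds on vertices} and the combinatorial identities of Subsections~\ref{First Combinatorial Identity}--\ref{Second Combinatorial Identity}.
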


 Building on work in \cite{Ass, Mes}, we can recursively define the vertices of each $\triangle^H(b_0, \ldots, b_m)$.
 We have 
 \begin{lem}
 \[A_i(m+1)=A_{i+1}(m) \text{ for $i\in \{0, 2, \ldots, n-2\}$} \]
\[A_{n-1}(m+1)=b_{m+1}A_0(m)+A_n(m) \]
\[A_{n}(m+1)   = (b_{m+1}+1)A_0(m) +A_n(m).\]
 \end{lem}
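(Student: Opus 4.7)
The plan is to identify each cylinder $\triangle^H(b_0, \ldots, b_m)$ as the image of the full cone $\triangle^H$ under the explicit linear map $M_{b_0} M_{b_1} \cdots M_{b_m}$, where $M_b := T_b^{-1}$. Once this identification is in place, the vertices $A_i(m)$ are precisely the images of the extreme rays $E_0, E_1, \ldots, E_n$ of $\triangle^H$, namely $E_k = (\underbrace{1,1,\ldots,1}_{k+1\ \text{ones}},0,\ldots,0)^T$. The three recursions in the lemma then collapse into a single linear-algebra calculation: the action of one transition matrix $M_{b_{m+1}}$ on these $n+1$ extreme rays.

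First I would invert $T_b$ directly: solving $T_b {\bf y} = {\bf x}$ yields
\[
M_b {\bf x} \;=\; (x_0 + b x_{n-1} + x_n,\; x_0,\; x_1,\; \ldots,\; x_{n-1})^T.
\]
A short check with the defining inequalities shows that $M_b \triangle^H = \triangle^H_b$: writing ${\bf x} = M_b {\bf y}$ with $y_0 > y_1 > \cdots > y_n > 0$, one sees directly that ${\bf x} \in \triangle^H$, and that $\lfloor (x_0-x_1)/x_n \rfloor = \lfloor b + y_n/y_{n-1} \rfloor = b$ since $0 < y_n/y_{n-1} < 1$. An easy induction on $m$, using that $T^H$ acts as left-multiplication by $T_{b_0}$ on $\triangle^H_{b_0}$, then gives $\triangle^H(b_0, \ldots, b_m) = M_{b_0} M_{b_1} \cdots M_{b_m}\, \triangle^H$, so that $A_i(m) = M_{b_0} \cdots M_{b_m} E_i$ (with the natural labelling of vertices induced by the $E_i$).

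The central computation is the action of a single $M_b$ on each $E_k$. Reading directly off the explicit form of $M_b$, one finds $M_b E_k = E_{k+1}$ for $0 \leq k \leq n-2$ (the block of $1$'s shifts down one row, and the $b$- and $1$-entries in the top row of $M_b$ at positions $n-1$ and $n$ meet only zeros of $E_k$), $M_b E_{n-1} = b E_0 + E_n$ (the top row now picks up one extra $b$ from position $n-1$), and $M_b E_n = (b+1)E_0 + E_n$ (the top row picks up both the $b$ and the $1$). Applying the linear map $M_{b_0} \cdots M_{b_m}$ to both sides of each of these three identities, with $b$ taken to be $b_{m+1}$, gives the three claimed recursions for the $A_i(m+1)$.

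The only substantive step beyond routine matrix multiplication is the geometric identification $\triangle^H(b_0, \ldots, b_m) = M_{b_0} \cdots M_{b_m}\, \triangle^H$; this is what I expect to be the main (and quite mild) obstacle, since it requires the floor-function argument above to match the combinatorial label $b_j$ with the action of the corresponding matrix. I also read the indexing set displayed as $\{0, 2, \ldots, n-2\}$ in the statement as $\{0, 1, 2, \ldots, n-2\}$, since the computation above yields $M_b E_k = E_{k+1}$ uniformly for every $k$ in that range, including $k=1$.
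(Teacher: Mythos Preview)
Your proposal is correct. The paper itself does not actually prove this lemma: it cites \cite{Ass, Mes} and offers only the schematic diagram showing $A_{n-1}(m+1)$ and $A_n(m+1)$ sitting on the edge from $A_0(m)$ to $A_n(m)$. Your argument via the inverse matrices $M_b = T_b^{-1}$ and their action on the extreme rays $E_k$ is precisely the way to make this rigorous and self-contained; it also makes explicit the identification $\triangle^H(b_0,\ldots,b_m)=M_{b_0}\cdots M_{b_m}\,\triangle^H$ that underlies the paper's whole treatment of cylinder cells (and which reappears, for instance, in the Jacobian computation in Section~\ref{ergodic proof}). The floor computation $\lfloor (x_0-x_1)/x_n\rfloor = \lfloor b + y_n/y_{n-1}\rfloor = b$ is exactly what is needed to pin $M_b\triangle^H$ inside $\triangle^H_b$, and the three recursions then fall out of the single calculation $M_b E_k = E_{k+1}$ for $k\le n-2$, $M_b E_{n-1}=bE_0+E_n$, $M_b E_n=(b+1)E_0+E_n$. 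You are also right that the index set $\{0,2,\ldots,n-2\}$ in the displayed statement is a typo for $\{0,1,2,\ldots,n-2\}$; the paper uses the case $i=1$ freely in Lemma~\ref{recursive formula} and in the $n=3$ example that follows.
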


For the dimension $n$ case,  the vertices of each $\triangle^H(b_0, \ldots, b_m)$ are vectors in $\R^{n+1}$.  The inital simplex $\triangle^H$ has vertices
$$A_0 = \left( \begin{array}{c} 1 \\ 0 \\ \vdots \\ 0 \end{array}  \right), A_1 = \left( \begin{array}{c} 1 \\ 1\\0 \\ \vdots \\ 0 \end{array}  \right), \ldots, A_n = \left( \begin{array}{c} 1 \\ 1 \\ \vdots \\ 1 \end{array}  \right)$$
giving us that the vertices of $\triangle^H(b_0)$ are 
$$A_0(0) = \left( \begin{array}{c} 1 \\ 1 \\ 0 \\ \vdots \\ 0 \end{array}  \right), A_1(0) = \left( \begin{array}{c} 1 \\ 1\\1\\ 0 \\ \vdots \\ 0 \end{array}  \right), \ldots, A_{n-2}(0) = \left( \begin{array}{c} 1 \\   \vdots \\ 1 \\ 0 \end{array}\right),$$
$$A_{n-1}(0) = \left( \begin{array}{c} b_0 \\ 1 \\ \vdots \\ 1 \end{array}  \right),   A_n(0) = \left( \begin{array}{c} b_0+1 \\ 1 \\ \vdots \\ 1 \end{array}  \right)$$
 
 As a schematic diagram, we can think of this as 
 
 \begin{center}

 \begin{tikzpicture}[scale=1.5]

\draw(0,0)--(4,0);
\draw(4,0)--(3, 4);
\draw(3,4)--(0,0);

\draw(4,0)--(1,4/3);
\draw(4,0)--(2,8/3);

\node[below] at (0,0) {$A_0(m)$};
\node[below] at (4,0) {$A_1(m) = A_0(m+1)$};
\node[right] at (3,4) {$A_n(m)$};

\node[left] at (2, 8/3) {$b_{m+1}A_0(m)+ A_n(m) = A_{n-1}(m+1)$};
\node[left] at (1, 4/3) {$(b_{m+1}+1)A_0(m)+ A_n(m) = A_{n}(m+1)$};

\end{tikzpicture}  
      
\end{center}

Let $m>n.$  Suppose that there is a constant $B$ such that 
$$b_{m-(n-2)}, b_{m-(n-3)}, \ldots, b_m \leq B.$$
For notational convenience, set 
$$b_{m-(n-1)}=b,$$
where $b$ could be any nonnegative integer. Finally, again for notational convenience, denote the 
$$A_i(m-n) = A_i.$$

Then we have
\begin{lem}\label{recursive formula}
    \begin{eqnarray*}
A_0(m) &=& bA_0 + A_n   \\
A_1(m) &=& b_{m-(n-2)}A_1 + (b+1) A_0 + A_ n \\
A_2(m) &=& b_{m-(n-3)} A_2 + (b_{m-(n-2)}+1) A_1 + (b+1) A_0 + A_ n\\
A_3(m) &=&  b_{m-(n-4)} A_3 + (b_{m-(n-3)} +1) A_2 + (b_{m-(n-2)}+1) A_1 + (b+1) A_0 + A_ n \\
& \vdots & \\
A_{n-2}(m ) &=& b_{m-1} A_{n-2} + (b_{m-2}+1) A_{n-3} + \ldots +   (b_{m-(n-2)}+1) A_1 + (b+1) A_0 + A_ n   \\
A_{n-1}(m) &=&   b_mA_{n-1}  +(b_{m-1}+1) A_{n-2} + (b_{m-2}+1) A_{n-3} \\
&&
+ \ldots   + (b_{m-(n-2)}+1) A_1 + (b+1) A_0 + A_ n  \\
A_n(m ) &=&    (b_m+ 1) A_{n-1}  +(b_{m-1}+1) A_{n-2} + (b_{m-2}+1) A_{n-3} \\ 
&& 
+ \ldots   + (b_{m-(n-2)}+1) A_1 + (b+1) A_0 + A_ n     \\
\end{eqnarray*}

\end{lem}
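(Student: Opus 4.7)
The plan is to iterate the previous lemma exactly $n$ times, starting from the vertices $A_j(m-n)=A_j$. Write $A_j^{(k)} := A_j(m-n+k)$, so that $A_j^{(0)}=A_j$ and $A_j^{(n)}=A_j(m)$. The recursion at each level reads
\begin{align*}
A_j^{(k+1)} &= A_{j+1}^{(k)} \quad \text{for } 0\leq j\leq n-2, \\
A_{n-1}^{(k+1)} &= b_{m-n+k+1}\, A_0^{(k)} + A_n^{(k)}, \\
A_n^{(k+1)} &= (b_{m-n+k+1}+1)\, A_0^{(k)} + A_n^{(k)}.
\end{align*}
Iterating the first line yields two shift identities that I will rely on throughout: (a)~$A_0^{(k)}=A_k$ for $0\leq k\leq n-1$, obtained by shifting until the superscript drops to $0$, and (b)~$A_j^{(n)}=A_{n-1}^{(j+1)}$ for $0\leq j\leq n-2$, obtained by shifting until the subscript reaches $n-1$.

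The key step is then to prove by induction on $k\in\{1,\ldots,n\}$ the pair of formulas
\begin{align*}
A_{n-1}^{(k)} &= b_{m-(n-k)}\, A_{k-1} + \sum_{j=0}^{k-2} (b_{m-(n-1-j)}+1)\, A_j + A_n, \\
A_n^{(k)} &= (b_{m-(n-k)}+1)\, A_{k-1} + \sum_{j=0}^{k-2} (b_{m-(n-1-j)}+1)\, A_j + A_n.
\end{align*}
The base case $k=1$ is just the recursion at level $0$ together with the relabeling $b=b_{m-(n-1)}$ (the sum being empty). For the inductive step, substituting identity (a) and the inductive form of $A_n^{(k)}$ into the recursions at level $k$ absorbs the leading coefficient $b_{m-(n-k)} A_{k-1}$ of $A_n^{(k)}$ into the sum as the new term $(b_{m-(n-k)}+1)A_{k-1}$ (which is exactly the $j=k-1$ summand of the level-$(k+1)$ formula), while the freshly introduced term $b_{m-(n-k-1)}A_k$ (respectively $(b_{m-(n-k-1)}+1)A_k$) supplies the new leading term at level $k+1$.

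Reading off the lemma is then mechanical. The case $k=n$ of the induction gives the last two rows directly. For each $0\leq j\leq n-2$, identity (b) gives $A_j(m)=A_j^{(n)}=A_{n-1}^{(j+1)}$, and substituting the inductive formula at level $j+1$ produces the $j$th row of the claimed statement. The main obstacle is purely bookkeeping: aligning the iteration level $k$, the coordinate index $j$, and the three different $b$-index offsets ($b_{m-(n-k)}$, $b_{m-(n-k-1)}$, $b_{m-(n-1-j)}$), so that under the change of variable $k=j+1$ the leading term $b_{m-(n-k)}A_{k-1}$ at level $k$ matches the leading $b_{m-(n-j-1)}A_j$ of row $j$ in the lemma, and similarly for every summand inside the sum.
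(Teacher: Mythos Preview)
Your proof is correct and follows exactly the approach the paper intends: the paper does not give a formal proof of this lemma at all, but simply states it and then illustrates the computation in the $n=3$ example of the following subsection, which is precisely your iteration of the previous recursion lemma carried out by hand. Your version formalizes this via the induction on the level $k$ with the two shift identities (a) and (b), which is a cleaner and more rigorous packaging of the same argument.
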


\subsection{The n=3 example}

Just to ground the above in a concrete example, set
 $n=3$.  This means that each simplex 
$$\triangle^H(b_0, b_1,\ldots, b_m)$$
has $4$ vertices.

Suppose $m>4,$
set $$b_{m-2}= b,$$
and suppose that there is a constant $B$ so that 
$$b_{m-1}, b_m \leq B.$$
Denote the vertices of $\triangle^H(b_0, \ldots, b_{m-3})$  by 
$$A_0, A_1, A_2, A_3 .$$
Then in our notation we  have 
\begin{eqnarray*}
A_0(m-3) &=& A_0 \\
A_1(m-3) &=& A_1 \\
A_2(m-3) &=& A_2 \\
A_3(m-3) &=& A_3
\end{eqnarray*}

We have 
\begin{eqnarray*}
A_0(m-2) &=& A_1(m-3) \\
&=& A_1\\
A_1(m-2  )  &=& A_2(m-3) \\
&=& A_2 \\
A_{2}(m- 2) &=& b_{m-2}A_0 (m-3) + A_3(m-3)\\
&=& b  A_0   + A_3  \\
A_{3}(m- 2) &=& (b_{m-2}+1) A_0 (m-3) + A_3(m-3)\\
&=& (b +1) A_0   + A_3  \\
\end{eqnarray*}

Then
\begin{eqnarray*}
A_0(m-1) &=& A_1(m-2) \\
&=& A_2    \\
A_1(m-1  )  &=& A_2(m-2) \\
&=& b  A_0   + A_3 \\
A_{2}(m- 1) &=& b_{m-1}A_0 (m-2) + A_3(m-2)\\
&=& b_{m-1}  A_1 + (b +1) A_0   + A_3  \\
A_{3}(m- 1) &=& (b_{m-1}+1) A_0 (m-2) + A_3(m-2)\\
&=&  (b_{m-1} +1) A_1 + (b +1) A_0   + A_3    \\
\end{eqnarray*}

and

\begin{eqnarray*}
A_0(m) &=& A_1(m-1) \\
&=&  b  A_0   + A_3   \\
A_1(m  )  &=& A_2(m-1) \\
&=& b_{m-1}  A_1 + (b +1) A_0   + A_3  \\
A_{2}(m) &=& b_{m}A_0 (m-1) + A_3(m-1)\\
&=& b_{m}  A_2 + (b_{m-1} +1) A_1 + (b +1) A_0   + A_3   \\
A_{3}(m) &=& (b_{m}+1) A_0 (m-1) + A_3(m-1)\\
&=&  (b_{m} + 1 )  A_2 + (b_{m-1} +1) A_1 + (b +1) A_0   + A_3    \\
\end{eqnarray*}

This gives us the following:
\begin{eqnarray*}
    A_1 (m)  &=& b_{m-1}  A_1 +  A_0 + A_0(m)    \\
    A_2 (m)  &=&  b_{m}  A_2 + (b_{m-1} +1) A_1 +  A_0  + A_0(m)  \\
    A_3 (m)  &=&   (b_{m} + 1 )  A_2 + (b_{m-1} +1) A_1 +  A_0    + A_0(m)  \\
\end{eqnarray*}

  \subsection{Control of growth of vertices}

  We need to understand how the vertices $A_i(m)$ grow in their $\ell_1$ norm.
From the recursive formula and from that all of the $b_i$ are non-negative, we have
  \begin{lem}\label{vertices grow}
  For all $m$, we have 
  $$||A_0(m) || < || A_1(m)|| < \cdots < ||A_n(m)||$$
  and for all $i$,
  $$||A_i(m) || \leq ||A_i(m+1)||.$$
  \end{lem}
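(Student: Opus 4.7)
The plan is to prove both inequalities simultaneously by induction on $m$, using the recursive formulas for the vertices from the previous lemma. The only substantive observation I need beyond those formulas is that every $A_i(m)$ has non-negative entries (obvious from the base vertices being non-negative, the recursions being non-negative linear combinations, and the $b_j$ being non-negative integers). Once I know entrywise non-negativity, the $\ell_1$ norm becomes additive on the recursions: $\|c_1 v_1 + c_2 v_2\|_1 = c_1\|v_1\|_1 + c_2\|v_2\|_1$ whenever $c_j \geq 0$ and the $v_j$ have non-negative entries. This is the single fact that drives everything.

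For the base case $m = 0$, I would read off from the explicit list of vertices that $\|A_i(0)\|_1 = i+2$ for $i = 0,\ldots, n-2$, $\|A_{n-1}(0)\|_1 = b_0 + n$, and $\|A_n(0)\|_1 = b_0 + n + 1$, which is manifestly a strictly increasing chain. For the inductive step, the identities $A_i(m+1) = A_{i+1}(m)$ for $0 \leq i \leq n-2$ shift the strict chain $\|A_1(m)\|_1 < \cdots < \|A_{n-1}(m)\|_1$ from the inductive hypothesis into the chain $\|A_0(m+1)\|_1 < \cdots < \|A_{n-2}(m+1)\|_1$. Then using additivity applied to $A_{n-1}(m+1) = b_{m+1}A_0(m) + A_n(m)$ and $A_n(m+1) = (b_{m+1}+1)A_0(m) + A_n(m)$, I get
\[
\|A_{n-1}(m+1)\|_1 = b_{m+1}\|A_0(m)\|_1 + \|A_n(m)\|_1 \geq \|A_n(m)\|_1 > \|A_{n-1}(m)\|_1 = \|A_{n-2}(m+1)\|_1,
\]
and $\|A_n(m+1)\|_1 = \|A_{n-1}(m+1)\|_1 + \|A_0(m)\|_1 > \|A_{n-1}(m+1)\|_1$, completing the strict chain at level $m+1$.

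The monotonicity statement $\|A_i(m)\|_1 \leq \|A_i(m+1)\|_1$ then comes essentially for free from the strict chain: for $i \leq n-2$, $\|A_i(m+1)\|_1 = \|A_{i+1}(m)\|_1 > \|A_i(m)\|_1$; for $i = n-1$ the explicit recursion gives $\|A_{n-1}(m+1)\|_1 \geq \|A_n(m)\|_1 > \|A_{n-1}(m)\|_1$; and for $i = n$ it gives $\|A_n(m+1)\|_1 \geq \|A_0(m)\|_1 + \|A_n(m)\|_1 > \|A_n(m)\|_1$. There is no real obstacle in this proof; the only thing one must be a bit careful about is the $\ell_1$ additivity, which is why I would state the non-negativity of the coordinates of $A_i(m)$ as an explicit (trivial) sub-induction before invoking additivity.
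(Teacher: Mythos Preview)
Your approach is exactly what the paper has in mind: its entire proof is the one-line remark that the lemma follows ``from the recursive formula and from that all of the $b_i$ are non-negative,'' and you have correctly fleshed out the induction (including the key observation that entrywise non-negativity makes $\|\cdot\|_1$ additive on the recursions). One small slip in your base case: you copied the paper's explicit list of the $A_i(0)$, which contains a typo---the recursion actually gives $A_{n-1}(0)=b_0A_0+A_n$, hence $\|A_{n-1}(0)\|_1=b_0+n+1$ and $\|A_n(0)\|_1=b_0+n+2$; with the values $b_0+n$ and $b_0+n+1$ you wrote, the chain would fail to be strict when $b_0=0$, but with the correct values (which are what your inductive step is based on anyway) the base case goes through.
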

  
  But we also have that the ratios of the $$||A_k(m)||$$ cannot grow too quickly, under assumptions of control on the size of the various $b_k.$

  \begin{lem}\label{bounds on vertices} Let $m>n.$  Suppose that there is a constant $B$ such that 
$$b_{m-(n-2)}, b_{m-(n-3)}, \ldots, b_m \leq B.$$
Then for all $i<j$ we have 
$$\frac{ ||A_i(m)||}{||A_j(m)||}\geq \frac{1}{(B+1)(n+1)}$$
    
  \end{lem}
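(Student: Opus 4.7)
The plan is to apply Lemma~\ref{recursive formula} directly and pass to $\ell_1$ norms term by term. Let $a_i := \|A_i(m-n)\|$ and let $b := b_{m-(n-1)}$ (the one index from the relevant window that the hypothesis leaves \emph{un}bounded). First I would verify by a simple induction that every vertex $A_i(k)$ has non-negative entries: the base case is the $0/1$ vertex set of $\triangle^H$, and the inductive step uses only non-negative combinations (the recursion $A_{n-1}(k+1)=b_{k+1}A_0(k)+A_n(k)$, $A_n(k+1)=(b_{k+1}+1)A_0(k)+A_n(k)$, and the shift $A_i(k+1)=A_{i+1}(k)$ for $i\leq n-2$, from Subsection~\ref{vertices}). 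Consequently the $\ell_1$ norm is additive on all the sums appearing in Lemma~\ref{recursive formula}, and so that Lemma passes to norms with identical coefficients.

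The heart of the argument is to compare $\|A_0(m)\|$ with $\|A_j(m)\|$ for $j\geq 1$. The key observation is that the only coefficient in the problem not controlled by the hypothesis is $b$, and by Lemma~\ref{recursive formula} it appears in each $A_j(m)$ \emph{only} in front of $A_0$ (as $b$ in $A_0(m)$, and as $b+1$ in every $A_j(m)$ with $j\geq 1$). From $\|A_0(m)\|=b\,a_0+a_n$, together with $a_0\leq a_n$ (Lemma~\ref{vertices grow}), I get the two basic estimates
$$\|A_0(m)\|\;\geq\; a_n\qquad\text{and}\qquad \|A_0(m)\|\;\geq\;(b+1)\,a_0.$$
For $j\geq 1$, Lemma~\ref{recursive formula} writes $\|A_j(m)\|$ as $(b+1)a_0+a_n$ plus a non-negative combination of $a_1,\ldots,a_{n-1}$ whose coefficients are each of the form $b_k$ or $b_k+1$ with $k\in\{m-(n-2),\ldots,m\}$, hence bounded by $B+1$. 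Since $a_k\leq a_n$ for each such $k$, this middle block contributes at most $(B+1)(n-1)a_n$.

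Assembling the pieces yields
$$\|A_j(m)\|\;\leq\;(B+1)(n-1)\,a_n+(b+1)\,a_0+a_n\;\leq\;\bigl((B+1)(n-1)+2\bigr)\|A_0(m)\|\;\leq\;(B+1)(n+1)\,\|A_0(m)\|,$$
where the final inequality is $2\leq 2(B+1)$. Combining with the monotonicity $\|A_i(m)\|\geq\|A_0(m)\|$ from Lemma~\ref{vertices grow} gives $\|A_j(m)\|/\|A_i(m)\|\leq(B+1)(n+1)$ for all $i<j$, which is the desired bound.

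The only genuinely non-obvious point, and the step that makes the proof work, is the observation that the unbounded ``bad'' coefficient $b$ sits in the \emph{same} position (multiplying $a_0$) in $\|A_0(m)\|$ as it does in every $\|A_j(m)\|$, so it cancels in the ratio; the other potentially large quantity $a_n$ is likewise absorbed by the $+a_n$ summand of $\|A_0(m)\|$. Everything else is bookkeeping using the hypothesis and Lemma~\ref{vertices grow}.
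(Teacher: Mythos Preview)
Your proof is correct and follows essentially the same approach as the paper's: both reduce to comparing $\|A_0(m)\|=b\,a_0+a_n$ against $\|A_n(m)\|$ via Lemma~\ref{recursive formula}, bound each of the at most $n-1$ ``middle'' coefficients by $B+1$, and absorb both the unbounded $(b+1)a_0$ term and the $a_n$ term into $\|A_0(m)\|$ using $a_0\leq a_n$. Your writeup is in fact a bit more explicit about why the $\ell_1$ norm is additive here (non-negativity of entries) and about the two key lower bounds $\|A_0(m)\|\geq a_n$ and $\|A_0(m)\|\geq (b+1)a_0$, but the argument is the same.
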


  \begin{proof}
  We have
\begin{eqnarray*} 
\frac{ ||A_i(m)||}{||A_j(m)||}&\geq & \frac{ ||A_0(m)||}{||A_n(m)||}\\
&=& \frac{ ||bA_0 + A_n||}{||(b_m+ 1) A_{n-1}  
+ \ldots   + (b_{m-(n-2)}+1) A_1 + (b+1) A_0 + A_ n || }\\
&&\mbox{using Lemma \ref{recursive formula} and its notation}\\
&\geq & \frac{ ||bA_0 + A_n||}{||(B+ 1) (bA_0 + A_n)  
+ \ldots   + (B+1) (bA_0 + A_n) + bA_0 + A_ n || }\\
&\geq & \frac{1}{(B+1)(n+1)}.
\end{eqnarray*}

  \end{proof}

\subsection{Weak convergence: proof of Theorem \ref{weakConvThm}}

We need a little more  notation. Given two non-zero vectors $U$ and $V$ in $\R^n_{\ge 0}$, define a distance function 
    \[D(U,V):=\left|\frac{U}{||U||}-\frac{V}{||V||} \right|_2.\]
We have 
    \begin{equation}\label{distance shortcut}
        D(U, U+V)=\left|\frac{U}{||U||}-\frac{V}{||U+V||} \right|_2=\frac{||V||}{||U+V||}D(U,V),
    \end{equation}    
which is straightforward, with a lower dimensional case originally proven in \cite{Mes}.

Set 
$$d(k) = \max_{i,j} D(A_i(k), A_j(k)).$$
Then $d(k)$ is the diameter of the simplex 
$$\triangle^S(b_0, \ldots, b_k).$$
We have 
$$d(0)\geq d(1) \geq d(2) \geq \ldots.$$
We will have weak convergence precisely when
$$\lim_{k\rightarrow \infty}d(k) = 0.$$

Let $m>n$   and assume  that there is a constant $B$ such that 
$$b_{m-(n-2)}, b_{m-(n-3)}, \ldots, b_m \leq B.$$
 We will show that 
 $$d(m) \leq \frac{Bn + B + n}{Bn + B + n + 1} d(m-n).$$

As there are infinitely many such $m$ with corresponding bounds $b_{m-(n-2)}, b_{m-(n-3)}, \ldots, b_m \leq B,$
we will have our limit.

One last point, before setting up the needed string of inequalities.  For all $j>i$, using Lemma \ref{recursive formula}, we know that 
both $A_i(m)$ and $A_j(m) - A_i(m)$ are in the simplex 
$\triangle^H(b_0, \ldots, b_{m-n})$, which gives us 
$$D(A_i(m), A_j(m) - A_i(m))\leq d(m-n).$$

For $i<j$, we have 
\begin{eqnarray*}
D(A_i(m), A_j(m) )&=& D(A_i(m), A_i+ (A_j(m)-A_i(m) ))\\
&=& \frac{||A_j(m)-A_i(m)||}{||A_j(m)||}D(A_i(m),A_j(m)-A_i(m))\\
&\leq & \frac{||A_j(m)-A_i(m)||}{||A_j(m)||}d(m-n)\\
&=& \left( 1 - \frac{||A_i(m)||}{||A_j(m)||}\right) d(m-n)\\
&\leq& \left( 1 -  \frac{1}{(B+1)(n+1)}\right) d(m-n)\\
&=& \frac{Bn + B + n}{Bn + B + n + 1} d(m-n).
\end{eqnarray*}

Theorem \ref{weakConvThm} is true.

\section{Bounds on triangle sequence almost everywhere }\label{almost everywhere weak convergence}

\subsection{Key Theorem on convergence almost everywhere}

This section is needed preliminaries for Section \ref{ergodic proof}, where we will finally show that the triangle map is ergodic.

Here we show

\begin{thm}
    \label{b_k<B inf often cor}
    Fix any non-negative number $B$.  For Lebesgue-almost-every $x\in \triangle$, the triangle sequence for $x$ contains an infinite number of strings of $b_i$  of length $n-1$ bounded by the constant $B$.
\end{thm}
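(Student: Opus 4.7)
The natural plan is a conditional Borel--Cantelli argument applied to disjoint ``blocks'' of digits. For each $j \geq 0$ define
$$E_j := \bigl\{\, x \in \triangle : b_{j(n-1)}(x), b_{j(n-1)+1}(x), \ldots, b_{(j+1)(n-1)-1}(x) \leq B\,\bigr\},$$
where $b_k(x)$ denotes the $k$-th term of the triangle sequence of $x$. Since the $E_j$ refer to pairwise disjoint blocks of length $n-1$, the theorem reduces to showing $\lambda(\{E_j \text{ i.o.}\}) = 1$, where $\lambda$ is Lebesgue measure on $\triangle$. Let $\Fc_k$ be the $\sigma$-algebra generated by $b_0, \ldots, b_k$. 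By L\'evy's extension of the second Borel--Cantelli lemma, it suffices to exhibit a constant $\delta = \delta(B,n) > 0$ such that, for every $j$,
$$\lambda\bigl(E_j \,\big|\, \Fc_{j(n-1)-1}\bigr) \geq \delta \qquad \text{a.s.}$$

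To obtain such a uniform lower bound, one must explicitly control the Lebesgue measure of each cylinder. Since $\triangle^H(b_0, \ldots, b_k)$ is the cone with vertices $A_0(k), \ldots, A_n(k)$ given by Lemma \ref{recursive formula}, the measure $\lambda(\triangle(b_0, \ldots, b_k))$ can be written in closed form as a determinant built from these vertex vectors divided by a product of their first coordinates (the Jacobian factor from the projection $P \circ i^{-1}$). In particular, the conditional density
$$\rho(b_0, \ldots, b_k;\, c_1, \ldots, c_{n-1}) := \frac{\lambda(\triangle(b_0, \ldots, b_k,\, c_1, \ldots, c_{n-1}))}{\lambda(\triangle(b_0, \ldots, b_k))}$$
is by the recursion in Lemma \ref{recursive formula} an explicit rational function of $c_1, \ldots, c_{n-1}$ whose coefficients are polynomials in the entries of $A_0(k), \ldots, A_n(k)$. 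The conditional probability of interest is then the finite sum $\sum_{(c_1, \ldots, c_{n-1}) \in \{0, \ldots, B\}^{n-1}} \rho$.

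The proof thus splits into two combinatorial evaluations, corresponding to Subsections \ref{First Combinatorial Identity} and \ref{Second Combinatorial Identity}: the first identity rewrites $\lambda(\triangle(b_0, \ldots, b_k, c_1, \ldots, c_{n-1}))$ in a form that separates the dependence on the past vertices from the dependence on the new digits $c_1, \ldots, c_{n-1}$; the second identity evaluates the iterated sum $\sum_{c_i \leq B} \rho$ as a closed-form ratio in $A_0(k), \ldots, A_n(k)$ that can be compared with the uniform ratio bound of Lemma \ref{bounds on vertices} to produce a past-independent lower bound $\delta(B,n) > 0$.

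The principal obstacle is precisely this uniformity. When the earlier digits $b_i$ are large, the vertex vectors $A_i(k)$ become highly anisotropic and cylinders elongated, so naive upper-and-lower bounds on individual summands of $\rho$ collapse and give $\delta = 0$ in the limit. The role of the two combinatorial identities is to exhibit the exact cancellations that prevent this collapse; once they are in hand, the argument is routine: feed the uniform bound $\lambda(E_j \mid \Fc_{j(n-1)-1}) \geq \delta$ into L\'evy's Borel--Cantelli to conclude that almost surely infinitely many $E_j$ occur, each providing a string of $n-1$ consecutive terms bounded by $B$.
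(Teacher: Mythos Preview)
Your conditional Borel--Cantelli framework is sound, but the specific implementation has a genuine gap: the uniform lower bound $\lambda(E_j\mid\Fc_{j(n-1)-1})\ge\delta$ that you claim does not hold.  When the digit immediately preceding your block is large, the vertex vectors of the conditioning cylinder become highly anisotropic (in particular $\|A_n\|/\|A_0\|$ can be made arbitrarily large), and the conditional probability that the next $n-1$ digits are all $\le B$ collapses.  Already for $n=2$ one computes directly, via the volume formula and a telescoping sum, that
\[
\lambda\bigl(b_m\le B \,\bigm|\, b_0,\dots,b_{m-1}\bigr)=\frac{(B+1)\|A_0\|}{(B+1)\|A_0\|+\|A_2\|},
\]
which tends to $0$ as $b_{m-1}\to\infty$.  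So no past-independent $\delta>0$ exists.  Your appeal to Lemma~\ref{bounds on vertices} to rescue this is circular: that lemma \emph{assumes} the preceding $n-1$ digits are bounded, which is exactly the event whose probability you are trying to bound from below.

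The paper avoids this trap by inserting a single ``buffer'' digit.  It proves the special case $B=0$ (Theorem~\ref{b_k=0 inf often cor}), which trivially implies the statement for every $B\ge0$, and does so via Proposition~\ref{prop on postive area}: within any cylinder $\triangle^S(b_0,\dots,b_{m-n})$, the union over \emph{all} $b\ge1$ of the sub-cylinders $\triangle^S(b_0,\dots,b_{m-n},b,0,\dots,0)$ (with $n-1$ trailing zeros) occupies at least a fixed fraction of the measure.  The summation over the unrestricted buffer digit $b$ is what absorbs the anisotropy of the parent cell; the two combinatorial identities of Subsections~\ref{First Combinatorial Identity} and~\ref{Second Combinatorial Identity} are used to evaluate precisely this infinite sum over $b$, not (as you write) a finite sum over bounded digits $c_i\le B$.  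Once Proposition~\ref{prop on postive area} is in hand, the Borel--Cantelli step is the same as yours, but now applied to blocks of length $n$ (one free digit followed by $n-1$ zeros) rather than $n-1$.
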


We will actually prove a subset of the above also happens almost everywhere: 

\begin{thm}
    \label{b_k=0 inf often cor}
    For Lebesgue-almost-every $x\in \triangle$, the triangle sequence for $x$ contains an infinite number of strings of $n-1$ zeroes.
\end{thm}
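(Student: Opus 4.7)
The plan is to reduce Theorem \ref{b_k=0 inf often cor} to an explicit measure estimate on cylinders and then apply a Borel-Cantelli-type argument.  The starting point is the cylinder volume formula
\[
    \mathrm{Leb}\bigl(\triangle(b_0,\ldots,b_k)\bigr) \;=\; \frac{1}{n!}\cdot\frac{|\det M(k)|}{\prod_{i=0}^{n}\bigl(A_i(k)\bigr)_0},
\]
where $M(k)$ is the $(n{+}1)\times(n{+}1)$ matrix whose columns are the vertices $A_0(k),\ldots,A_n(k)$ of Subsection \ref{vertices}, and $(A_i(k))_0$ denotes the first coordinate of $A_i(k)$.  This identity is the standard volume formula for the projection of a simplicial cone from $\triangle^H$ to $\triangle$, and it turns every measure statement below into a linear-algebraic statement about the vertices, which are governed by the recursion of Lemma \ref{recursive formula}.

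Applying Lemma \ref{recursive formula} with $b_{k+1}=\cdots=b_{k+n-1}=0$, the vertex update becomes purely additive with $\pm 1$ coefficients.  In particular the update matrix is unimodular, so $|\det M(k+n-1)|=|\det M(k)|$: this is exactly the role played by the first combinatorial identity of Subsection \ref{First Combinatorial Identity}.  The ratio of denominators then reduces to an explicit rational function in the first coordinates of the $A_i(k)$, and the second identity of Subsection \ref{Second Combinatorial Identity} is what controls this quotient as the prefix $(b_0,\ldots,b_k)$ varies.

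For each $K\ge 0$, let $B_K\subseteq\triangle$ be the set of points whose triangle sequence contains no run of $n-1$ consecutive zeros at positions $\ge K$.  Writing $B_K$ as the disjoint union of the cylinders it contains and summing their measures via the volume formula together with the two combinatorial identities, my aim is to show $\mathrm{Leb}(B_K)\to 0$ as $K\to\infty$.  Since $\bigcup_K B_K$ is exactly the exceptional set of the theorem --- the points whose triangle sequence has only finitely many strings of $n-1$ zeros --- this gives the conclusion.

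The main obstacle is the measure estimate itself, and specifically the second combinatorial identity.  A naive attempt to bound the conditional ratio
\[
    \frac{\mathrm{Leb}\bigl(\triangle(b_0,\ldots,b_k,\,\underbrace{0,\ldots,0}_{n-1})\bigr)}{\mathrm{Leb}\bigl(\triangle(b_0,\ldots,b_k)\bigr)}
\]
uniformly from below in $(b_0,\ldots,b_k)$ fails: an explicit computation already shows that for $n=2$ this ratio can decay like $1/b_k$ when the most recent digit is large, so a straightforward conditional Borel-Cantelli argument is not available.  The second identity circumvents this by producing cancellation \emph{across} all admissible ``all-nonzero'' continuations after position $K$ simultaneously, so that the aggregate measure of $B_K$ decays at a summable rate even though no individual conditional probability need do so.  This averaging phenomenon is the truly new ingredient relative to the two-dimensional argument of \cite{Mes}; once it is in hand, letting $K\to\infty$ completes the proof.
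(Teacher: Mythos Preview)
Your proposal has a genuine gap: you have misidentified what the two combinatorial identities actually do, and in doing so you have talked yourself out of the very approach the paper uses.

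First, the claim that ``$|\det M(k+n-1)|=|\det M(k)|$ is exactly the role played by the first combinatorial identity'' is simply wrong.  The matrix $M(k)$ is a product of the $T_{b_i}$ from Subsection~\ref{Homogeneous Triangle Map}, each of which has determinant $\pm 1$ regardless of $b_i$; the determinant is therefore always $\pm 1$ and carries no information.  The actual content of Lemma~\ref{alg lemma} is a partial-fractions identity
\[
\sum_{j=0}^{n-2}\frac{(-1)^j\binom{n-2}{j}}{(j+1)x+y}=\frac{(n-2)!\,x^{n-2}}{(x+y)(2x+y)\cdots((n-1)x+y)},
\]
which has nothing to do with determinants.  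Likewise, the second identity (Lemma~\ref{tree lemma}) is a signed multinomial sum used to collapse an iterated telescoping expansion; it is not a cancellation ``across all admissible all-nonzero continuations.''

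Second, and more importantly, your diagnosis that ``a straightforward conditional Borel--Cantelli argument is not available'' is the opposite of what happens.  You are right that the ratio
$\lambda\bigl(\triangle(b_0,\ldots,b_k,0,\ldots,0)\bigr)/\lambda\bigl(\triangle(b_0,\ldots,b_k)\bigr)$
is not uniformly bounded below.  The paper's fix is not to average over long nonzero continuations, but to step back \emph{one} digit: Proposition~\ref{prop on postive area} shows that for every prefix $(b_0,\ldots,b_{m-n})$,
\[
\frac{\lambda\Bigl(\bigcup_{b\ge 1}\triangle^S(b_0,\ldots,b_{m-n},b,\underbrace{0,\ldots,0}_{n-1})\Bigr)}{\lambda\bigl(\triangle^S(b_0,\ldots,b_{m-n})\bigr)}>c_n>0,
\]
with $c_n$ depending only on $n$.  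The two combinatorial identities are used inside the proof of this proposition (via Lemma~\ref{lin alg craziness lemma}) to evaluate the sum over $b$ of the cell volumes $\lambda\bigl(\triangle^S(\ldots,b,0,\ldots,0)\bigr)$ in closed form and then bound it below against $\lambda\bigl(\triangle^S(\ldots)\bigr)$.  Once this uniform conditional lower bound is in hand, the theorem follows by the elementary ``biased coin'' argument: in every cylinder a fixed positive fraction sees a run of $n-1$ zeros within the next $n$ digits, hence the set of points that eventually stop seeing such runs has measure zero.

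Finally, a minor logical point: your sets $B_K$ are increasing in $K$, so ``$\mathrm{Leb}(B_K)\to 0$ as $K\to\infty$'' is equivalent to $\mathrm{Leb}(B_K)=0$ for every $K$.  That is the correct target, but phrasing it as a limit obscures what you actually need to prove.
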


This will give us 

\begin{cor}
\label{triangle seq weekly convergent cor}
    For Lebesgue-almost-every $u \in \Delta_{n-1}$, the triangle sequence is weakly convergent at $u$.
\end{cor}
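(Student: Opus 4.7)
The corollary is essentially a direct synthesis of the two theorems that immediately precede it in the paper: Theorem \ref{weakConvThm}, which supplies a sufficient condition for weak convergence at a single point, and Theorem \ref{b_k=0 inf often cor}, which says that this sufficient condition holds almost everywhere. So the plan is simply to splice these together.

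More concretely, let $E \subseteq \triangle$ denote the set of points $x \in \triangle$ whose triangle sequence $(b_0,b_1,\ldots)$ contains infinitely many disjoint strings of $n-1$ consecutive zeros. By Theorem \ref{b_k=0 inf often cor}, the complement $\triangle \setminus E$ has Lebesgue measure zero. For any $x \in E$, there exist arbitrarily large indices $m$ such that $b_{m-(n-2)} = b_{m-(n-3)} = \cdots = b_m = 0$. This is precisely the hypothesis of Theorem \ref{weakConvThm} (applied with bound $B=0$, or equivalently the $n-1$-consecutive-zeros version), so the triangle sequence weakly converges at $x$. Hence weak convergence holds on the full-measure set $E$.

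There is one small bookkeeping item: weak convergence was defined on $\triangle^S$ via nested cylinders $\triangle^S(b_0,\ldots,b_m)$, whereas Theorem \ref{b_k=0 inf often cor} is stated for Lebesgue-almost-every point of $\triangle$. By Lemma \ref{1-1}, the map $P^S \circ i : \triangle \to \triangle^S$ is a continuous bijection carrying each cylinder $\triangle(b_0,\ldots,b_m)$ onto the corresponding cylinder $\triangle^S(b_0,\ldots,b_m)$ and preserving triangle sequences, so weak convergence at $x \in \triangle$ is equivalent to weak convergence at $P^S \circ i(x) \in \triangle^S$. Consequently the full-measure conclusion transfers without issue.

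There is no real obstacle internal to this corollary; all the genuine work is upstream. The hard step is Theorem \ref{b_k=0 inf often cor} itself (proved via the two combinatorial identities advertised in the introduction), together with the enhancement of Theorem \ref{weakConvThm} from ``infinitely many zero blocks'' to ``infinitely many $B$-bounded blocks,'' which widens the range of deductions but is not needed for the corollary as stated. Once those are granted, the corollary is immediate.
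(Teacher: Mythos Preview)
Your proposal is correct and follows exactly the paper's own approach: the paper's proof simply says the result ``follows immediately from the combination of Theorem~\ref{weakConvThm} and Corollary~\ref{b_k=0 inf often cor}.'' Your added bookkeeping via Lemma~\ref{1-1} and the explicit identification of the full-measure set $E$ are reasonable elaborations but not required.
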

\begin{proof}
    This follows immediately from the combination of Theorem \ref{weakConvThm} and Corollary \ref{b_k=0 inf often cor}. Note that even though with probability 1 a random point will be uniquely specified by its triangle sequence, we strongly conjecture that there are  examples of triangle sequences that converge to a line segment, a triangle, or a simplex of any dimension up to $n-2$.  The key point, though, is that the set of all points falling into one of these special examples has  Lebesgue measure zero.
\end{proof}

Proving Theorem \ref{b_k=0 inf often cor} will take work and is the most technical part of this paper.

\subsection{Key Proposition for weak convergence almost everywhere}
We will throughout assume that $m>n$, where $n$ is the dimension of the triangle map.
  \begin{prop}\label{prop on postive area}
Let $b_0, \ldots, b_{m-n}$ be any sequence of non-negative integers.  Let 
$$R = \cup_{b_i>0} \triangle^S(b_0, \ldots, b_{m-n},b,\underbrace{0,0,\ldots, 0}_{n-1 \text{ zeroes}}).$$

Then 
    \[\frac{\lambda(R)}{\lambda(\triangle^S(b_0, \ldots, b_{m-n})}>\frac{1}{(n-1)(n)(n+1)\cdots(2n-2)n^{n-1}}\]
    where $\lambda $ is Lebesgue measure.
\end{prop}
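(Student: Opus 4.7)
The plan is to convert the measure ratio into an explicit sum over $b$ of rational expressions in the parent vertex norms, and then bound this sum via a telescoping identity. First I would invoke the formula
\[
\lambda\!\left(\triangle^S(b_0,\ldots,b_k)\right) \;=\; C_n \cdot \frac{|\det[A_0(k)\mid\cdots\mid A_n(k)]|}{\prod_{i=0}^n \|A_i(k)\|_1},
\]
standard for simplicial cones projected radially onto the $\ell^1$-slice. Each triangle-map matrix $T_b$ has determinant $\pm 1$, and the initial vertex matrix is unit-upper-triangular, so the determinantal numerator is identically $1$. Writing $a_i := \|A_i(m-n)\|_1$, the ratio reduces to
\[
\frac{\lambda(R)}{\lambda(\triangle^S(b_0,\ldots,b_{m-n}))} \;=\; \sum_{b \geq 1}\; \prod_{i=0}^n \frac{a_i}{\|A_i(m;b)\|_1}.
\]

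Next I would apply Lemma \ref{recursive formula} with $b_{m-(n-1)} = b$ and $b_{m-(n-2)} = \cdots = b_m = 0$. Setting $S_k := \sum_{j=0}^{k-1} a_j$ (with $S_0 = 0$) and $d_k := a_n + S_k$, a direct calculation collapses the new vertex norms into a one-parameter family,
\[
\|A_k(m;b)\|_1 \;=\; b\, a_0 + d_k, \qquad k = 0, 1, \ldots, n.
\]
The $d_k$ are strictly increasing ($d_k - d_{k-1} = a_{k-1}$ for $k \geq 1$, $d_0 = a_n$), and since $a_0 = d_1 - d_0$, $a_k = d_{k+1}-d_k$ for $1\le k < n$, and $a_n = d_0$, the numerator telescopes as $\prod_i a_i = d_0 \prod_{k=1}^{n}(d_k - d_{k-1})$. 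This casts the ratio in a clean rational-function form amenable to partial fractions.

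To bound this sum from below, I would invoke the monotonicity $a_0 \leq a_1 \leq \cdots \leq a_n$ from Lemma \ref{vertices grow} and the crude majorization $d_k \leq (k+1)a_n$, which yields $\prod_{k=0}^n(b\,a_0+d_k) \leq a_n^{n+1}\prod_{k=0}^n (b+k+1)$. Combining this with the standard telescoping identity $\sum_{b\geq 1}\prod_{j=1}^{n+1}(b+j)^{-1} = 1/[n\,(n+1)!]$ produces a first, candidate bound. To sharpen it to the explicit constant $1/[(n-1)n(n+1)\cdots(2n-2)n^{n-1}]$, one must exploit the telescoping numerator $d_0\prod(d_k-d_{k-1})$ against the denominator $\prod(ba_0+d_k)$ through a partial-fraction/cofactor identity---presumably the ``first combinatorial identity'' of Subsection \ref{First Combinatorial Identity}---so that the dependence on the particular values $d_k$ disappears and only a universal rational function of $b$ remains.

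The main obstacle is precisely this uniformity in $(b_0,\ldots,b_{m-n})$: a naive application of $b\,a_0+d_k \leq (b+k+1)a_n$ buries a factor of $\prod_{i<n}a_i/a_n^n$ in the bound, a quantity which is \emph{not} bounded below across arbitrary past sequences (small pasts with skewed $a_i$-ratios can drive it to zero). Extracting a constant independent of the past requires pairing each numerator difference $d_k - d_{k-1}$ with an appropriate denominator factor $(ba_0+d_j)$ so that the surviving summand is a rational function of $b$ whose coefficients depend only on $n$; then the sum over $b \geq 1$ telescopes to the explicit combinatorial constant in the proposition.
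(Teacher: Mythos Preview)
Your setup is correct and in fact cleaner than the paper's presentation: the volume ratio does reduce to
\[
\sum_{b\ge 1}\ \frac{\prod_{i=0}^n a_i}{\prod_{k=0}^n (b\,a_0+d_k)},
\qquad d_k=a_n+\sum_{j<k}a_j,
\]
and you correctly diagnose the obstacle, namely that any bound of the form $b\,a_0+d_k\le (b+k+1)a_n$ leaves behind $\prod_{i<n}a_i/a_n^n$, which is \emph{not} uniformly bounded below over arbitrary pasts.

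The gap is in how you propose to overcome this. You suggest ``pairing each numerator difference $d_k-d_{k-1}$ with an appropriate denominator factor $(ba_0+d_j)$'' and invoking the first combinatorial identity. But Lemma~\ref{alg lemma} is a statement about nodes in \emph{arithmetic progression}, $(j+1)x+y$; your nodes $d_0<d_1<\cdots<d_n$ have spacings $a_0,a_1,\ldots,a_{n-1}$, which are arbitrary. There is no evident way to apply that identity directly to your expression, and a naive partial-fraction expansion of $\prod_k(ba_0+d_k)^{-1}$ produces individually divergent $b$-sums.

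The paper's route is different in exactly this place. Before any partial fractions, it first \emph{majorizes} each factor by an equally-spaced one,
\[
b\,a_0+d_k \;\le\; (b+k)\,a_0 + \|A\|,\qquad A:=A_1+\cdots+A_n,
\]
(which holds because $d_k-\|A\|=a_0+\sum_{1\le j<k}a_j-\sum_{k\le j<n}a_j\le k\,a_0$, all $a_j$ being nonnegative). Now every factor has the form $(b+k)a_0+\|A\|$ with the \emph{same} step $a_0$, and it is this arithmetic-progression structure that the iterated partial-fraction decomposition exploits. Carrying that decomposition out introduces multinomial coefficients over compositions, whose partial sums are controlled by the second combinatorial identity (Lemma~\ref{tree lemma}); only then does Lemma~\ref{alg lemma} collapse the alternating binomial sum into the closed form
\[
\sum_{b\ge 1}\lambda(\cdots)\;>\;\frac{\sqrt n}{(n-1)!\,(n-1)\,\|A_0\|}\cdot\frac{1}{\|A_0+A\|\,\|2A_0+A\|\cdots\|(n-1)A_0+A\|}.
\]
The final ratio then follows from $\|A_i\|\ge\|A_1\|$ and $\|kA_0+A\|\le (k+n-1)\|A_n\|\le (k+n-1)n\|A_1\|$, which together give the stated constant $[(n-1)n(n+1)\cdots(2n-2)\,n^{n-1}]^{-1}$.

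In short: your framework is right, but the proof hinges on two ingredients you have not supplied --- the preliminary ``equal-spacing'' majorization, and the tree/multinomial identity of Subsection~\ref{Second Combinatorial Identity}. Neither is optional; the first combinatorial identity alone, applied to your unequally-spaced $d_k$, does not yield a past-independent bound.
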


The proof of this proposition will take some work, spread out over the next few subsections.

Let us first, though,  see why this proposition will imply Theorem \ref{b_k=0 inf often cor}

\bigskip
{\it Proof of Theorem \ref{b_k=0 inf often cor} assuming Proposition \ref{prop on postive area} }

Every cell $ \triangle^S(b_0, \ldots, b_{m-n})$
has at least $\frac{1}{(n-1)(n)(n+1)(n+2)\cdots(2n-2)n^{n-1}}$ of its total area consisting of points such that of the next $n$ elements of the triangle sequence,  the last $n-1$ of them are sequential zeros. Since this is a positive percentage, the theorem follows. This is for the same reason as the fact that for any positive integer $s$, if we flip a biased coin an infinite number of times, we will get an infinite number of strings of tails of length $s$ with probability 1 no matter the bias of the coin (provided of course the probability of landing on tails is nonzero).

\subsection{Preliminary Lemma on Volume}

We will be using the notation as in Section \ref{vertices}.  
In particular, let $b_0, \ldots, b_{n-4}$ be any sequence of non-negative integers.  Let $A_0, \ldots, A_n$ be the column vectors in $\R^{n+1}$ that define the cone $\triangle^H(b_0, \ldots, b_{n-4})$, as described earlier.  Then 
$$\left\{\frac{A_0}{||A_0 || } , \ldots, \frac{A_n}{||A_n ||}\right\}$$
are the vertices of the simplex $\triangle^S(b_0, \ldots, b_{n-4})$.  We will need to use the following lemma (which is already known, though not quite in this language.

\begin{lem}
\label{cell volume lemma}
     We have  $$\lambda(\triangle^S(b_0, \ldots, b_{m-4}))=\frac{\sqrt{n}}{(n-1)!||A_1||\;||A_2||\cdots||A_n||}.$$ 
    Let $b\in \Z_{\ge 1}$.  Then
    $$\lambda(\triangle^S(b_0, \ldots, b_{m-4},b,\underbrace{0,0,\ldots, 0}_{n-2 \text{ zeroes}}))$$
    is 
    $$\frac{\sqrt{n}}{(n-1)!||bA_1+A_n||\;||(b+1)A_1+A_n||\;||(b+1)A_1+A_2+A_n||\cdots||(b+1)A_1+A_2+\ldots+A_n||}.$$
\end{lem}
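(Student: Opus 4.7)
The plan is to reduce both formulas to the standard volume identity for an $n$-simplex inscribed in the hyperplane $\sum_{j=0}^{n}x_j = 1$ of $\R^{n+1}$, namely
\[
\text{$n$-vol}(\text{conv}(v_0,\ldots,v_n)) = \frac{\sqrt{n+1}}{n!}\,\bigl|\det(v_0,v_1,\ldots,v_n)\bigr|,
\]
where $(v_0,\ldots,v_n)$ denotes the matrix with the $v_i$ as columns. I would establish this identity quickly by viewing the simplex as the base of the $(n+1)$-simplex with apex $0$: the latter has $(n+1)$-volume $|\det|/(n+1)!$, and the cone formula gives $(n+1)\text{-vol} = \frac{1}{n+1}\cdot h \cdot n\text{-vol}$ with height $h = 1/\sqrt{n+1}$. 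Applied to $v_i = A_i(k)/\|A_i(k)\|$ and pulling each scalar out of the determinant by multilinearity, this gives
\[
\lambda(\triangle^S(b_0,\ldots,b_k))
=
\frac{\sqrt{n+1}}{n!}\cdot
\frac{\bigl|\det(A_0(k),\ldots,A_n(k))\bigr|}{\|A_0(k)\|\,\|A_1(k)\|\cdots\|A_n(k)\|}.
\]
(The mismatch with the paper's constant $\sqrt{n}/(n-1)!$, together with the absence of $\|A_0(k)\|$ in the stated denominator, I would reconcile as a normalization artifact of the conventions of Subsection \ref{The sliced non-homogeous triangle map}.)

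The key technical fact is then that $\bigl|\det(A_0(k),\ldots,A_n(k))\bigr|=1$ for every cylinder. The base case is immediate, since $(A_0(0),\ldots,A_n(0))$ reduces by elementary column operations to an upper-triangular matrix with $1$'s on the diagonal. For the inductive step I use the recursion from Subsection \ref{vertices}: at level $m+1$ the columns are $A_1(m),\ldots,A_{n-1}(m),\ b_{m+1}A_0(m)+A_n(m),\ (b_{m+1}+1)A_0(m)+A_n(m)$. Subtracting the penultimate column from the last places $A_0(m)$ in the final slot; the $b_{m+1}A_0(m)$ contribution in the penultimate column then drops out by column duplication with that new last column, and what remains is a cyclic permutation of $(A_0(m),\ldots,A_n(m))$ of sign $(-1)^n$. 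So the absolute value of the determinant is preserved.

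For the second formula I iterate the recursion explicitly through the block $b, 0, 0, \ldots, 0$ of length $n-1$. The initial $b$-step sends $A_{i+1}\mapsto A_i$ for $0\le i\le n-2$ and installs $bA_0+A_n$ and $(b+1)A_0+A_n$ in the top two slots. Each subsequent zero step again shifts the first $n-1$ positions and appends one more of the old $A_j$ to a running sum in the last slot, yielding vertices of the form $A_j+A_{j-1}+\cdots+A_1+(b+1)A_0+A_n$. After all $n-1$ iterations the resulting list of vertex $\ell_1$-norms matches the product in the denominator of the claimed formula, so substituting into the general formula from the first paragraph (and using that the determinant is still $\pm 1$ by the invariant above) produces the stated volume.

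The main obstacle is the combinatorial bookkeeping in the preceding step: keeping precise track of exactly which $A_j$ occupies which column after $n-1$ unrollings of a shift-and-append rule is error-prone. I would anchor the computation against the $n=3$ example already carried out in the preceding subsection, verify the pattern independently for $n=4$, and then formalize the invariant by induction on the number of trailing zeros consumed, at which point the general case should drop out without further calculation.
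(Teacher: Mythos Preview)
Your approach is essentially the paper's: establish the general simplex-volume formula, show the vertex matrix has determinant $\pm 1$, and then for the second part iterate the recursion of Subsection~\ref{vertices} through the block $b,0,\ldots,0$ and plug the resulting vertex norms back into the general formula. The only differences are cosmetic. The paper does not rederive the volume identity but cites Lemma~3.2 of \cite{Mes}; and rather than your inductive column-operation argument for $|\det|=1$, it simply observes that the vertex matrix is a product of the matrices $T_{b_k}$ from Subsection~\ref{Homogeneous Triangle Map}, each of determinant $\pm 1$.

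Your instinct about the constant is correct and worth stating more confidently: the stated formula drops the factor $\|A_0\|$ and shifts the constant, but the paper's own proof of Proposition~\ref{prop on postive area} writes $\lambda(C)=\sqrt{n}\big/\big((n-1)!\,\|A_0\|\,\|A_1\|\cdots\|A_n\|\big)$ with all $n+1$ norms present, so the missing $\|A_0\|$ in the lemma statement is a typo rather than a convention. Since the constant cancels in the ratio $\lambda(R)/\lambda(C)$, this does not affect the downstream argument; your derivation with $\sqrt{n+1}/n!$ and the full product $\prod_{i=0}^{n}\|A_i\|$ is the correct normalization for an $n$-simplex in the hyperplane $\sum x_j=1$ of $\R^{n+1}$.
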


\begin{proof}
    The first part of the lemma is proven in \cite{Mes} in their Lemma 3.2 on their page 292. We simply have to consider the $(n+1) \times (n+1)$ matrix whose columns are $A_0, \ldots, A_n,$ and observe that this matrix is the product of the matrices of the form $T_{b_k}$, as defined in Subsection \ref{Homogeneous Triangle Map}, each of which has determinant $\pm 1.$

    As seen in Lemma \ref{recursive formula}, the vertices of the cell $\triangle^S(b_0, \ldots, b_{m-4},b,\underbrace{0,0,\ldots, 0}_{n-2 \text{ zeroes}})$ are 
    $$A_1^n=bA_1+A_n, A_2^n=A_1^n+A_1, A_3^n=A_2^n+A_2, \ldots, A_n^n=A_{n-1}^n+A_{n-1}.$$ Thus plugging in for these vertices, we have the result.
\end{proof}

\subsection{First Combinatorial Identity}\label{First Combinatorial Identity}

Our  goal is to show that points with $n-1$ zeroes in a row early in their triangle sequence occupy an appreciable proportion of each cell. Our proof of this requires a pair of strictly combinatorial lemmas, which we present in this subsection and in the next. In addition to their relevance to the ergodicity result, they are interesting in their own right.

\begin{lem}
\label{alg lemma}
For any integers $0 \leq j \leq n,$ the following holds:   
$$\sum_{j=0}^{n-2} \frac{{n-2\choose j}(-1)^j}{(j+1)x+y}=\frac{(n-2)!\cdot x^{n-2}}{(x+y)(2x+y)(3x+y)\cdots((n-1)x+y)}.$$
\end{lem}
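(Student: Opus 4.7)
The plan is to prove this by partial fraction decomposition, treating both sides as rational functions in $y$ (with $x$ as a parameter). The right-hand side
\[
F(y) := \frac{(n-2)!\, x^{n-2}}{(x+y)(2x+y)\cdots((n-1)x+y)}
\]
has numerator of degree $0$ in $y$ and denominator of degree $n-1$ in $y$, with simple poles at $y = -kx$ for $k=1,2,\ldots,n-1$. So its partial fraction expansion has the form $\sum_{k=1}^{n-1} \frac{c_k}{kx+y}$ with no polynomial part, and it suffices to show $c_{j+1} = (-1)^j \binom{n-2}{j}$ for $j=0,\ldots,n-2$, after which reindexing gives the left-hand side.

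Next I would compute each residue by the cover-up method. At $y = -kx$,
\[
c_k = \frac{(n-2)!\, x^{n-2}}{\prod_{\ell=1,\,\ell\ne k}^{n-1}(\ell x - kx)} = \frac{(n-2)!}{\prod_{\ell\ne k}(\ell-k)},
\]
and the $x^{n-2}$ cancels against the $n-2$ factors of $x$ pulled out of the denominator. Splitting the remaining product into the $\ell<k$ and $\ell>k$ ranges gives $\prod_{\ell<k}(\ell-k) = (-1)^{k-1}(k-1)!$ and $\prod_{\ell>k}(\ell-k) = (n-1-k)!$, so that
\[
c_k = \frac{(n-2)!}{(-1)^{k-1}(k-1)!\,(n-1-k)!} = (-1)^{k-1}\binom{n-2}{k-1}.
\]
Setting $j = k-1$ gives exactly the coefficients on the left-hand side, completing the identification.

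There is no real obstacle here; the argument is mechanical once one commits to partial fractions in $y$. The only two things to double-check are (i) the degree comparison that rules out a polynomial part in the decomposition, and (ii) the sign bookkeeping in the $\ell<k$ product, which is where the $(-1)^j$ on the left originates. An alternative, slightly slicker phrasing would be to observe that $F(y)-\sum_{j=0}^{n-2}\frac{(-1)^j\binom{n-2}{j}}{(j+1)x+y}$ is a rational function of $y$ whose poles have all been cancelled by the construction of the $c_k$ above, and which tends to $0$ as $y\to\infty$, hence is identically zero; but this is just a repackaging of the same partial fraction argument.
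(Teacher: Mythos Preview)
Your proof is correct and takes a genuinely different route from the paper's. The paper proceeds by induction on $n$: after the substitution $u=x/y$, it proves the more general identity
\[
\sum_{j=0}^{k-2}\frac{(-1)^j\binom{k-2}{j}}{1+(r+j)u}=\frac{(k-2)!\,u^{k-2}}{(1+ru)(1+(r+1)u)\cdots(1+(r+k-2)u)}
\]
for all starting points $r\ge 1$, and passes from $k$ to $k+1$ by subtracting the $r$-instance from the $(r+1)$-instance and invoking Pascal's identity column by column to push the binomials up one row. Your argument instead recognizes the right-hand side as a proper rational function in $y$ with simple poles, and computes the partial-fraction coefficients directly via the cover-up residue $\frac{(n-2)!}{\prod_{\ell\ne k}(\ell-k)}=(-1)^{k-1}\binom{n-2}{k-1}$. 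The paper's approach is more elementary in that it needs only Pascal's rule and telescoping, and the auxiliary parameter $r$ it introduces is reused later in the paper; your approach is shorter, avoids induction entirely, and makes the appearance of the binomial coefficients transparent as residues. The only tacit assumption in your argument is $x\neq 0$ so that the poles are distinct, but this is harmless since the identity is between rational functions.
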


\ans{\textbf{The  $n=4$ example:} 
Before proving this strange lemma, we give an explicit example of what it is claiming. Plugging in $n=4$ on the left hand side, we get \begin{align*}
    &\frac{{2\choose0}(-1)^0}{x+y}+\frac{{2\choose1}(-1)^1}{2x+y}+\frac{{2\choose2}(-1)^2}{3x+y} \\
    =&\frac{1}{x+y}+\frac{-2}{2x+y}+\frac{1}{3x+y} \\
    =&\frac{2x+y-2(x+y)}{(x+y)(2x+y)}+\frac{1}{3x+y} \\
    =&\frac{-y}{(x+y)(2x+y)}+\frac{1}{3x+y} \\
    =&\frac{-y(3x+y)+(x+y)(2x+y)}{(x+y)(2x+y)(3x+y)} \\
    =&\frac{-3xy-y^2+2x^2+2xy+xy+y^2}{(x+y)(2x+y)(3x+y)} \\
    =&\frac{2x^2}{(x+y)(2x+y)(3x+y)},
\end{align*}
which matches the expression on the right hand side of the lemma.}

\begin{proof}
    We prove this by induction, with thanks to L. Pedersen, who suggested the proof strategy. First, factor out the $y$ from the denominators in the sum and perform the substitution $u=x/y$ to yield
    \begin{align}
        &\sum_{j=0}^{n-2} \frac{(-1)^j{{n-2}\choose j}}{(j+1)x+y} \notag \\ 
    =&  \frac{{{n-2}\choose {0}}}{x+y} - \frac{{{n-2}\choose {1}}}{2x+y}+\frac{{{n-2}\choose {2}}}{3x+y}-\ldots +(-1)^{n-2}\frac{{{n-2}\choose {n-2}}}{(n-1)x+y} \notag \\
    =& \frac{1}{y}\left[\frac{{{n-2}\choose {0}}}{1+u} - \frac{{{n-2}\choose {1}}}{1+2u}+\frac{{{n-2}\choose {2}}}{1+3u}-\ldots +(-1)^{n-2}\frac{{{n-2}\choose {n-2}}}{1+(n-1)u}\right] \label{third line algebra lemma}
    \end{align}  

    We now study the bracketed expression. Note that when $n=3$, the expression is $\frac{1}{1+u}-\frac{1}{1+2u}$, which equals $\frac{u}{(1+u)(1+2u)}$. For our inductive base case, we'll need something slightly stronger, which is that this expression holds for any pair of consecutive positive integer multiples of $u$, not just $u$ and $2u$. Let $r$ be the starting value. Our base case is \begin{equation}
        \frac{1}{1+ru}-\frac{1}{1+(r+1)u}=\frac{u}{(1+ru)(1+(r+1)u)}.
    \end{equation}

    Now suppose that for some positive integer $k\ge 3$, we have that for any positive integer $r$, \begin{align}
        \notag &\frac{{{k-2}\choose {0}}}{1+ru} - \frac{{{k-2}\choose {1}}}{1+(r+1)u}+\frac{{{k-2}\choose {2}}}{1+(r+2)u}-\ldots +(-1)^{k-2}\frac{{{k-2}\choose {k-2}}}{1+(r+k-2)u} \\ = & \frac{(k-2)!\cdot u^{k-2}}{(1+ru)(1+(r+1)u)\cdots(1+(r+k-2)u)} \label{inductive step}
    \end{align}

Recall Pascal's identity, namely ${n\choose {k-1}}+{n\choose{k}}={{n+1}\choose{k}}$. Applying this down the columns in the following equation subtraction, we have that 

\begin{table}[h]
\scalebox{0.8}{
\begin{tabular}{p{20cm}}
\begin{equation*} 
\frac{{{k-2}\choose {0}}}{1+ru} - \frac{{{k-2}\choose {1}}}{1+(r+1)u}+\frac{{{k-2}\choose {2}}}{1+(r+2)u}-\ldots +(-1)^{k-2}\frac{{{k-2}\choose {k-2}}}{1+(r+k-2)u}
\end{equation*}\\
\begin{equation*}
    \quad \quad \quad - \quad \quad \quad \left[\frac{{{k-2}\choose {0}}}{1+(r+1)u} - \frac{{{k-2}\choose {1}}}{1+(r+2)u}+\ldots +(-1)^{k-3}\frac{{{k-2} \choose {k-3}}}{1+(r+k-2)u}+(-1)^{k-2}\frac{{{k-2}\choose {k-2}}}{1+(r+k-1)u}\right]
\end{equation*} \\
\hline
\begin{equation*} 
    \quad \quad \frac{{{k-1}\choose {0}}}{1+ru} - \frac{{{k-1}\choose {1}}}{1+(r+1)u}+\frac{{{k-1}\choose {2}}}{1+(r+2)u}- \quad \ \ \ldots \ \  \quad +(-1)^{k-2}\frac{{{k-1}\choose {k-2}}}{1+(r+k-2)u}+(-1)^{k-1}\frac{{{k-2}\choose {k-2}}}{1+(r+k-1)u}   
\end{equation*}
\end{tabular}
}
\end{table}
\FloatBarrier

By \eqref{inductive step}, we know the first line in the subtraction equals $\frac{(k-2)!\cdot u^{k-2}}{(1+ru)(1+(r+1)u)\cdots(1+(r+k-2)u)}$ and the second equals $\frac{(k-2)!\cdot u^{k-2}}{(1+(r+1)u)(1+(r+2)u)\cdots(1+(r+k-1)u)}$. Subtracting these yields 
\begin{align}
    \notag &(k-2)!\cdot u^{k-2}\frac{(k-1)u}{(1+ru)(1+(r+1)u)\cdots(1+(r+k-1)u)} \\
    =&\frac{(k-1)!\cdot u^{k-1}}{(1+ru)(1+(r+1)u)\cdots(1+(r+k-1)u)} \label{induction conclusion}.
\end{align}

Thus if \eqref{inductive step} holds for $k$, it holds for $k+1$, so since we have the base case $k=3$ for all $r$, we know that \eqref{inductive step} holds for any $r$ and all $k \ge 3$.

Taking $r=1$ and $k=n$ in \eqref{inductive step}, we can now substitute back into \eqref{third line algebra lemma} to yield that our original summation equals \begin{align*}
    \frac{1}{y}\left[\frac{(n-2)!\cdot u^{n-2}}{(1+u)(1+2u)\cdots(1+(n-1)u)} \right]
\end{align*}

Now substituting back in for $u=x/y$, the $k-1$ factors of $1/y$ in the bracketed denominator cancel with the $k-2$ factors of $1/y$ in the numerator and the extra in front, leaving us with 
\[\sum_{j=0}^{n-2} \frac{{{n-2}\choose j}(-1)^j}{(j+1)x+y}=\frac{(n-2)!\cdot x^{n-2}}{(x+y)(2x+y)(3x+y)\cdots((n-1)x+y)}\] 
as desired.
      
\end{proof}

\subsection{ Second Combinatorial Identity}\label{Second Combinatorial Identity}

The second of the combinatorial lemmas is more difficult to state, so we introduce it with the help of a diagram. The relevant coefficients will arise naturally in our ergodicity proof, but we can more easily envision deriving them as follows.

Recall that the multinomial $n \choose{k_1, k_2, \ldots, k_m}$ is defined as the way, given $n$ items and $m$ boxes total, to choose $k_i$ items to put into each box $i$. We are interested particularly in the case where every item is placed into a box, meaning $\sum_{i=1}^m k_i=n$. Note that ${n \choose{k_1, k_2, \ldots, k_m}}=\frac{n!}{k_1!k_2!\cdots k_m!}$.

\begin{defn}
    A composition of a positive integer $n$ is a way of summing positive integers to yield $n$, where, unlike a partition, the order of the parts is taken into account. Thus $4=2+1+1$, $4=1+2+1$, and $4=1+1+2$ are all distinct compositions of $4$, although they all count as the same partition, namely $\{2,1\}\times[1,2]$. \cite{Andrews}
\end{defn}

There are $2^{n-1}$ compositions of $n$. We define their standard order to be lexicographic order with higher numbers coming first. Thus the standard order for compositions of 3 is $\{3\}, \{2,1\}, \{1,2\}, \{1,1,1\}$.

Given the composition $\{k_1, k_2, \ldots, k_m\}$, we naturally have a multinomial that corresponds, namely $${k_1+k_2+\ldots+k_m \choose k_1, k_2, \ldots, k_m}.$$
We can arrange all such multinomials into a binary tree as follows: let row $n$ contain all multinomials corresponding to compositions of $n$. Beginning with $1\choose 1$ at the top, every node $n\choose{k_1, k_2, \ldots, k_m}$ has left child

$${n+1\choose k_1+1, k_2, \ldots, k_m}$$
and right child
$${n+1\choose 1, k_1, k_2, \ldots, k_m }.$$

\begin{figure}
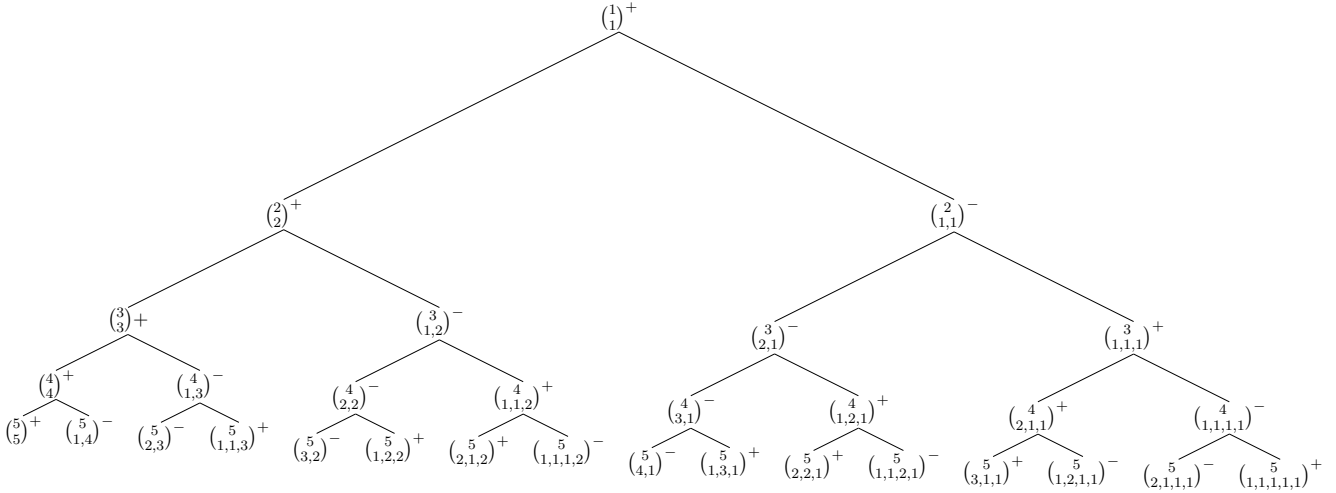

    \centering
    \scalebox{0.68}{
\Tree [.{$\binom{1}{1}^+$} 
        [.{$\binom{2}{2}^+$} 
            [.{$\binom{3}{3}+$} 
                [.{$\binom{4}{4}^+$} 
                    {$\binom{5}{5}^+$}
                    {$\binom{5}{1,4}^-$}
                ]
                [.{$\binom{4}{1,3}^-$} 
                    {$\binom{5}{2,3}^-$}
                    {$\binom{5}{1,1,3}^+$}
                ]
            ]
            [.{$\binom{3}{1,2}^-$} 
                [.{$\binom{4}{2,2}^-$} 
                    {$\binom{5}{3,2}^-$}
                    {$\binom{5}{1,2,2}^+$}
                ]
                [.{$\binom{4}{1,1,2}^+$} 
                    {$\binom{5}{2,1,2}^+$}
                    {$\binom{5}{1,1,1,2}^-$}
                ]
            ]
        ] 
        [.{$\binom{2}{1,1}^-$} 
            [.{$\binom{3}{2,1}^-$} 
                [.{$\binom{4}{3,1}^-$} 
                    {$\binom{5}{4,1}^-$}
                    {$\binom{5}{1,3,1}^+$}
                ]
                [.{$\binom{4}{1,2,1}^+$} 
                    {$\binom{5}{2,2,1}^+$}
                    {$\binom{5}{1,1,2,1}^-$}
                ]
            ]
            [.{$\binom{3}{1,1,1}^+$} 
                [.{$\binom{4}{2,1,1}^+$} 
                    {$\binom{5}{3,1,1}^+$}
                    {$\binom{5}{1,2,1,1}^-$}
                ]
                [.{$\binom{4}{1,1,1,1}^-$} 
                    {$\binom{5}{2,1,1,1}^-$}
                    {$\binom{5}{1,1,1,1,1}^+$}
                ]
            ]
        ] 
    ]
    }
    \caption{First Five Rows of Multinomial Tree}
    \label{Multinomial Tree Figure}
\end{figure}

We will now assign signs to the tree entries as follows: assign to $1\choose 1$ a $+$ and then as we move down the tree, sign each left child the same as its parent and each right child the opposite. Note that since left children have the same number of composition parts as their parents and right children have one more, this rule is the same as signing $n\choose{k_1, k_2, \ldots, k_m}$ by $(-1)^{m-1}$. See Figure \ref{Multinomial Tree Figure} for the first five rows of the tree with the signs corresponding to each entry labeled. Now the following strange property emerges, which we will first state in terms of the tree, then more formally with reference only to the standard order. For any $0\le k \le n$, the sum of the first $2^{k}$ signed entries in row $n+1$ equals $(-1)^{k}{n \choose k}$. 

As an example, take $k=3$ and $n=4$, which means we are looking at the sum of the first 8 signed entries in row 5. These are $1-5-10+20-10+30+30-60=-4$, which is ${4\choose 3}(-1)^3$, as predicted.

Let $C_{n,i}$ be the $i$-th composition in standard order of the positive integer $n$, and $|C_{n,i}|$ be the number of parts in $C_{n,i}$. Thus, for example, $C_{5, 7}=\{2,1,2\}$ and $|C_{5, 7}|=3$. Finally, let $n \choose C_{n,i}$ be the value of the multinomial corresponding to the composition $C_{n,i}$, so ${5 \choose C_{5,7}}={5\choose {2,1,2}}=\frac{5!}{2!1!2!}=30$. 

For clarity, Figure \ref{Multinomial Tree Figure New Notation} shows the same tree from Figure \ref{Multinomial Tree Figure}, just rewritten with the new notation.

\begin{figure}
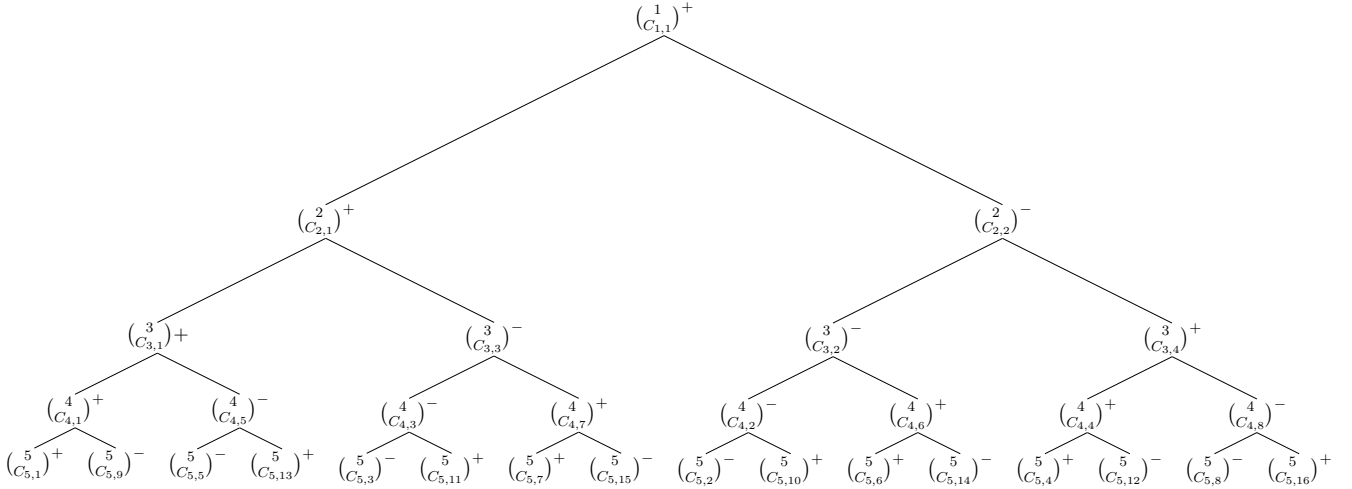

    \centering
    \scalebox{0.68}{
\Tree [.{$\binom{1}{C_{1,1}}^+$} 
        [.{$\binom{2}{C_{2,1}}^+$} 
            [.{$\binom{3}{C_{3,1}}+$} 
                [.{$\binom{4}{C_{4,1}}^+$} 
                    {$\binom{5}{C_{5,1}}^+$}
                    {$\binom{5}{C_{5,9}}^-$}
                ]
                [.{$\binom{4}{C_{4,5}}^-$} 
                    {$\binom{5}{C_{5,5}}^-$}
                    {$\binom{5}{C_{5,13}}^+$}
                ]
            ]
            [.{$\binom{3}{C_{3,3}}^-$} 
                [.{$\binom{4}{C_{4,3}}^-$} 
                    {$\binom{5}{C_{5,3}}^-$}
                    {$\binom{5}{C_{5,11}}^+$}
                ]
                [.{$\binom{4}{C_{4,7}}^+$} 
                    {$\binom{5}{C_{5,7}}^+$}
                    {$\binom{5}{C_{5,15}}^-$}
                ]
            ]
        ] 
        [.{$\binom{2}{C_{2,2}}^-$} 
            [.{$\binom{3}{C_{3,2}}^-$} 
                [.{$\binom{4}{C_{4,2}}^-$} 
                    {$\binom{5}{C_{5,2}}^-$}
                    {$\binom{5}{C_{5,10}}^+$}
                ]
                [.{$\binom{4}{C_{4,6}}^+$} 
                    {$\binom{5}{C_{5,6}}^+$}
                    {$\binom{5}{C_{5,14}}^-$}
                ]
            ]
            [.{$\binom{3}{C_{3,4}}^+$} 
                [.{$\binom{4}{C_{4,4}}^+$} 
                    {$\binom{5}{C_{5,4}}^+$}
                    {$\binom{5}{C_{5,12}}^-$}
                ]
                [.{$\binom{4}{C_{4,8}}^-$} 
                    {$\binom{5}{C_{5,8}}^-$}
                    {$\binom{5}{C_{5,16}}^+$}
                ]
            ]
        ] 
    ]
    }
    \caption{First Five Rows of Multinomial Tree with New Notation}
    \label{Multinomial Tree Figure New Notation}
\end{figure}

We can now state the lemma without reference to the tree.

\begin{lem}
\label{tree lemma}
For $n, k \in \Z_{\ge 0}$ with $k \le n$,
\[\underset{i \equiv 1\ (\text{mod } 2^{n-k})}{\mathlarger{\sum}} {n+1 \choose C_{n+1,i}}(-1)^{|C_{n+1,i}|-1}={n\choose k}(-1)^{k}.\]
\end{lem}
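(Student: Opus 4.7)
The plan is to recode the arithmetic condition $i\equiv 1\pmod{2^{n-k}}$ as a structural condition on the composition $C_{n+1,i}$, and then to collapse the resulting sum using two classical identities.

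First, I would invoke the standard bijection between compositions of $n+1$ and binary strings of length $n$, obtained by recording whether each of the $n$ gaps between the $n+1$ unit summands contains a bar. A direct check shows that the standard order (lexicographic, larger parts first) on compositions coincides with the natural increasing order on the MSB-first binary strings, so $C_{n+1,i}$ corresponds to the length-$n$ binary expansion of $i-1$. Consequently, $i\equiv 1\pmod{2^{n-k}}$ is equivalent to the last $n-k$ bits of $i-1$ vanishing, equivalently to the last $n-k$ gaps holding no bars, equivalently to the last part of $C_{n+1,i}$ having size at least $n+1-k$. Hence the sum in the lemma equals
\[\sum_{\substack{C\text{ a composition of }n+1\\ \text{last part of }C\,\ge\,n+1-k}}\binom{n+1}{C}(-1)^{|C|-1}.\]

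Next, I would parameterize these compositions by $s:=(n+1)-(\text{last part of }C)\in\{0,1,\ldots,k\}$. The case $s=0$ gives only $\{n+1\}$, contributing $1$; for $s\ge 1$, every admissible $C$ splits uniquely as $(C',\,n+1-s)$ with $C'$ a composition of $s$, so $|C|-1=|C'|$ and $\binom{n+1}{C',\,n+1-s}=\binom{n+1}{s}\binom{s}{C'}$. This rewrites the full sum as
\[1+\sum_{s=1}^{k}\binom{n+1}{s}\sum_{C'\text{ a composition of }s}(-1)^{|C'|}\binom{s}{C'}.\]

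Finally, I would evaluate the inner sum by grouping compositions of $s$ by their length $j$: then $\sum_{|C'|=j}\binom{s}{C'}$ is the number $j!\,S(s,j)$ of surjections $[s]\to[j]$, and the exponential generating function
\[\sum_{s\ge 0}\frac{x^s}{s!}\sum_{C'}(-1)^{|C'|}\binom{s}{C'}=\sum_{j\ge 0}(-1)^j(e^x-1)^j=e^{-x}\]
gives $\sum_{C'}(-1)^{|C'|}\binom{s}{C'}=(-1)^s$ for every $s\ge 0$. Substituting back collapses the expression to $\sum_{s=0}^{k}(-1)^s\binom{n+1}{s}=(-1)^k\binom{n}{k}$ via the standard alternating partial-sum identity (a one-line induction on $k$ using Pascal's rule), matching the right-hand side. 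The main obstacle I anticipate is step one, namely the careful verification that the standard order on compositions lines up with the MSB-first binary ordering so that the purely arithmetic constraint $i\equiv 1\pmod{2^{n-k}}$ translates into the structural statement about the last part of $C_{n+1,i}$; once that dictionary is in hand, the remainder is routine combinatorial bookkeeping.
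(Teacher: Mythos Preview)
Your proof is correct and takes a genuinely different route from the paper's. The paper argues by induction on the tree: it defines $S_n$ as the full row sum, shows $S_1=1$, and then observes that row $r+1$ decomposes into consecutive blocks of sizes $1,1,2,4,\ldots,2^{r-1}$, each block being a sign-flipped copy of an earlier row with an extra part appended; the block sums are thus $-S_j\binom{r+1}{j}$, and repeated application of Pascal's rule gives the partial sums $(-1)^j\binom{r}{j}$. Your argument instead bypasses the induction entirely: you translate $i\equiv 1\pmod{2^{n-k}}$ directly into the structural condition that the last part of $C_{n+1,i}$ is at least $n+1-k$, split off that last part, and evaluate the inner sum $\sum_{C'}(-1)^{|C'|}\binom{s}{C'}=(-1)^s$ via the surjection generating function. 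Both proofs finish with the same alternating partial-sum identity $\sum_{s=0}^k(-1)^s\binom{n+1}{s}=(-1)^k\binom{n}{k}$, and indeed the paper's block decomposition by last-part size is morally the same grouping you use; what you gain is that the identity $\sum_{C'}(-1)^{|C'|}\binom{s}{C'}=(-1)^s$ is established once and for all rather than carried along as an inductive hypothesis, making the argument cleaner and entirely tree-free. The one place to be careful, as you note, is the order-preserving dictionary between the standard order on compositions and MSB-first binary strings, but your verification of this is straightforward.
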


\begin{proof}
    We prove the lemma by induction, returning to the tree for intuition. Let $S_n$ be the sum of the multinomials corresponding to compositions of $n$ with their signs as listed in the tree. Equivalently, 
    \[S_n:=\sum_{i=1}^{2^{n-1}} {n \choose C_{n,i}}(-1)^{|C_{n,i}|-1}.\]

    As a base case, note that $S_1 = {1 \choose 1}=1$. Now suppose that for some $r \ge 0$, the lemma holds for row $r+1$ of the tree as well as all rows above it, meaning for any $0 \le k \le n \le r$, we have 
    \[\underset{i \equiv 1\ (\text{mod } 2^{n-k})}{\mathlarger{\sum}} {n+1 \choose C_{n+1,i}}(-1)^{|C_{n+1,i}|-1}={n\choose k}(-1)^{k}.\]

    The key thing to note is that every time we move right down the tree, we fix the last composition part size for all descendants of that node. Thus the entire right half of the tree consists of multinomials corresponding to compositions ending in 1, the second quarter of the tree from the left consists of multinomials corresponding to compositions ending in 2, and in general the second $\frac{1}{2^j}$-th of the tree from the left consists of multinomials corresponding to compositions ending in $j$. Thus in row $r+1$, where we have $2^{r}$ entries in the tree, we see that reading them from left to right, we encounter first 1 multinomial corresponding to a composition ending in $r+1$ (the odd one out in a sense, since it isn't the second unit in any binary division), then 1 multinomial corresponding to a composition ending in $r$, then 2 multinomials corresponding to a composition ending in $r-1$, and so on, with $2^{j}$ compositions corresponding to multinomials ending $r-j$. These compositions correspond to the ways to place $j+1$ items into boxes, since we have $r+1$ items total but are committing to having $r-j$ of them in our last box. Since there are only $2^j$ compositions of $j+1$, their corresponding multinomials must all appear in row $r+1$, each with a final part of size $r+1-(j+1)=r-j$ tacked on. In fact, our tree rules are such that those multinomials appear in the same order in row $r+1$ that they did originally in row $j+1$, just each with the extra part added to make the components sum to $r+1$. Thus reading the compositions from each multinomial across the tree at row $r+1$, we get first the entry ${r+1}$, then what is essentially a copy of row 1, just with an $r$ tacked on, then a copy of row 2 with an $r-1$ tacked on to each entry, then a copy of row 3 with an $r-2$ tacked on to each entry, and so on. 
    
    Since we are adding an entry to each composition, all the signs swap from how the appear in row $j+1$ to how they appear in row $r+1$. The effects of replacing the sum from row $j+1$ with the copy of row $j+1$ that appears in row $r+1$ are as follows: first, the multinomial now has us picking from $r+1$ items instead of $j+1$ items, so the $(j+1)!$ in the numerator of the value in row $j+1$ is replaced by an $(r+1)!$, for a net effect of multiplying by $\frac{(r+1)!}{(j+1)!}$. Second, the tacked on $r-j$ contributes an $(r-j)!$ to the denominator of the version of the multinomial that appears in row $r+1$. And third, as noted, the signs all swap from row $j+1$ to the copy of row $j+1$ embedded in row $r+1$. Thus the overall effect is that reading from left to right along row $r+1$, we have that the entries from number $2^{j-1}+1$ to number $2^{j}$ sum to $-S_j\frac{(r+1)!}{(j+1)!}\frac{1}{(r-j)!}$, or equivalently, to $-S_j\binom{r+1}{j+1}$. Since $j < r+1$, we have by our inductive assumption that $S_j=(-1)^{j-1}$. We now have that the sums of chunks of increasing size reading along row $r+1$ are as follows:
    \begin{itemize}
        \item The first entry, which is $\binom{r+1}{r+1}=1$
        \item The second entry, which is $-S_1\binom{r+1}{1}$
        \item The third and fourth entries, which sum to $-S_2\binom{r+1}{2}$
        \item Entries 5-8, which sum to $-S_3\binom{r+1}{3}$
        \item And so on, with entries $2^{j-1}+1$ to $2^{j}$ summing to $-S_j\binom{r+1}{j}$
    \end{itemize}

    Now we plug in for $S_n=(-1)^{n-1}$ and add up the items above. Writing the initial 1 as $\binom{r}{0}$, we have that by Pascal's identity, the sum of the first two entries in row $r+1$ is $\binom{r}{0}-\binom{r+1}{1}=-\binom{r}{1}$. Then the sum of the first four entries is $-\binom{r}{1}+\binom{r+1}{2}=\binom{r}{2}$. Continuing, we see that the sum of the first $2^j$ entries in row $r+1$ is $\binom{r}{j}(-1)^j$. Taking $j=r$, we get that the sum of the entire row, $S_{r+1}$, is equal to $\binom{r}{r}(-1)^{r}=(-1)^r$, allowing us to use this in the simplification for future rows, as desired. This completes the proof of the lemma.
\end{proof}

\subsection{A lower bound }

To prove the key Proposition \ref{prop on postive area} for weak convergence, we will need 
\begin{lem}
\label{lin alg craziness lemma} Using the notation of Proposition \ref{prop on postive area}, we have 
     $$\sum_{b=1}^\infty \lambda(\cup_{b>0} \triangle^S(b_0, \ldots, b_{m-n},b,\underbrace{0,0,\ldots, 0}_{n-1 \text{ zeroes}}))$$
     is strictly greater than 
   $$ \frac{\sqrt n}{(n-1)!(n-1)||A_0||}\left(\frac{1}{||A_0+A||\;||2A_0+A||\cdots||(n-1)A_0+A||}\right),$$
     where $A:=A_1+A_2+\ldots+A_n.$
\end{lem}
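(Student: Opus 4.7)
The plan is to combine the explicit vertex formulas for each subcell (from Lemma \ref{recursive formula}) with the volume formula of Lemma \ref{cell volume lemma}, exploit the additivity of the $\ell_1$ norm on nonnegative vectors to reduce the inequality to a purely scalar problem, and then recognize the resulting series as a telescoping sum whose closed form is directly comparable to the target.

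First I would apply Lemma \ref{recursive formula} with $b_{m-(n-1)} = b$ and $b_{m-(n-2)} = \cdots = b_m = 0$ to list the vertices of $\triangle^S(b_0, \ldots, b_{m-n}, b, 0, \ldots, 0)$ as $V_0 = b A_0 + A_n$ and $V_k = (b+1)A_0 + A_1 + \cdots + A_{k-1} + A_n$ for $k = 1, \ldots, n$. Writing $x = \|A_0\|$ and $s_k = \|A_n\| + \sum_{i=1}^{k-1}\|A_i\|$, so that $s_n = \|A\|$, the fact that every entry of each $V_k$ is nonnegative makes the $\ell_1$ norm additive and gives $\|V_k\| = (b+1)x + s_k$. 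Lemma \ref{cell volume lemma} then expresses the volume of each cell as $\frac{\sqrt{n}}{(n-1)!\,\prod_{k=1}^n \|V_k\|}$.

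The key geometric input is Lemma \ref{vertices grow}, which yields $\|A_i\| > \|A_0\| = x$ strictly for every $i \geq 1$; hence $s_n - s_k = \sum_{i=k}^{n-1}\|A_i\| > (n-k)x$ for each $k < n$, i.e.,
\[
(b+1)x + s_k \;<\; (b + k + 1 - n)x + s_n \qquad (1 \leq k < n),
\]
with equality at $k = n$. Multiplying these inequalities over $k = 1, \ldots, n$, taking reciprocals, and summing over $b \geq 1$ (with the substitution $c = b + 2 - n$) gives
\[
\sum_{b=1}^\infty \frac{1}{\prod_{k=1}^n \|V_k\|} \;>\; \sum_{c=3-n}^\infty \frac{1}{\prod_{l=0}^{n-1}\bigl((c+l)x + s_n\bigr)}.
\]
The right-hand sum is a classical telescoping series: a direct manipulation (in essence a specialization of the First Combinatorial Identity of Subsection \ref{First Combinatorial Identity}) yields
\[
\frac{1}{\prod_{l=0}^{n-1}\bigl((c+l)x + y\bigr)} = \frac{1}{(n-1)x}\left[\frac{1}{\prod_{l=0}^{n-2}\bigl((c+l)x+y\bigr)} - \frac{1}{\prod_{l=0}^{n-2}\bigl((c+1+l)x+y\bigr)}\right],
\]
so that summing from $c = 3-n$ to $\infty$ collapses the series to $\frac{1}{(n-1)x \prod_{l=0}^{n-2}((3-n+l)x + s_n)}$.

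Finally, since $3 - n + l \leq 1 + l$ for each $l \in \{0, \ldots, n-2\}$ (with strict inequality for $n \geq 3$), the denominator product above is at most $\prod_{j=1}^{n-1}(jx + s_n) = \prod_{j=1}^{n-1}\|jA_0 + A\|$, and multiplying through by the prefactor $\sqrt{n}/(n-1)!$ recovers the lemma. The main obstacle is justifying the sharper bound $s_k \leq s_n - (n-k)x$ rather than the naive $s_k \leq s_n$: a direct check with $n = 3$ and all $s_k$ collapsed to $s_n$ shows the naive bound cannot yield the claimed lower bound, so one must genuinely use the strict spread of the $\|A_i\|$'s above $\|A_0\|$ provided by Lemma \ref{vertices grow}.
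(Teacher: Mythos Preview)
Your argument is correct and takes a genuinely different, much shorter route than the paper.  Both you and the paper begin by bounding each vertex norm from above by a term of the form $\|(b+j)A_0+A\|$ so that the denominator becomes a product of $n$ consecutive terms of an arithmetic progression in $b$.  The paper uses only nonnegativity of the $A_i$ to get indices $b,b+1,\dots,b+n-1$, and then sums over $b$ by an iterated two–term partial-fraction expansion that generates $2^{n-2}$ pieces; the Second Combinatorial Identity (Lemma~\ref{tree lemma}) is invoked to recognise the resulting coefficients as binomials, and the First Combinatorial Identity (Lemma~\ref{alg lemma}) then collapses the alternating binomial sum to the target.  You instead use the strict ordering $\|A_i\|>\|A_0\|$ from Lemma~\ref{vertices grow} to get the sharper bound with indices $b+2-n,\dots,b+1$, and then sum via the single elementary telescope
\[
\frac{1}{\prod_{l=0}^{n-1}\bigl((c+l)x+y\bigr)}=\frac{1}{(n-1)x}\left[\frac{1}{\prod_{l=0}^{n-2}\bigl((c+l)x+y\bigr)}-\frac{1}{\prod_{l=1}^{n-1}\bigl((c+l)x+y\bigr)}\right],
\]
discarding the extra slack at the end.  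Your route bypasses both combinatorial lemmas entirely; in fact the same one-step telescope applied to the \emph{paper's} weaker bound already yields the target on the nose, so the elaborate machinery of Subsections~\ref{First Combinatorial Identity}--\ref{Second Combinatorial Identity} is not actually needed for this lemma.  One small caveat: check your indexing of the volume formula against Lemma~\ref{cell volume lemma} and the $n=5$ example in the paper---the product there appears to run over $V_0,\dots,V_{n-1}$ (including $\|bA_0+A_n\|$) rather than your $V_1,\dots,V_n$, which shifts all your indices down by one; the argument goes through unchanged after that adjustment, since the positivity check $(2-n)x+\|A\|>0$ still follows from $\|A\|>n\|A_0\|$.
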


Before we prove this lemma, we give an example for the $n=5$ case that illustrates the strategy. Then we have
     \begin{align*}
        &\frac{1}{||bA_0+A_5||\;||(b+1)A_0+A_5||\;||(b+1)A_0+A_1+A_5||\cdots||(b+1)A_0+A_1+\ldots+A_5||} \\
        >& \frac{1}{||bA_0+A||\;||(b+1)A_0+A||\;||(b+2)A_0+A||\cdots||(b+4)A_0+A||} \\
        =& \frac{1}{||A_0||}\left(\frac{1}{||bA_0+A||}-\frac{1}{||(b+1)A_0+A||}\right)\frac{1}{||(b+2)A_0+A||\;||(b+3)A_0+A||\;||(b+4)A_0+A||} \\
    \end{align*} 
    \begin{align*}   
    \begin{split}
        =\frac{1}{||A_0||}\left[\frac{1}{2||A_0||}\left(\frac{1}{||bA_0+A||}-\frac{1}{||(b+2)A_0+A||}\right) -\frac{1}{||A_0||} \left(\frac{1}{||(b+1)A_0+A||} \right.\right. \\ \left.\left.-\frac{1}{||(b+2)A_0+A||}\right)\right] \cdot \frac{1}{||(b+3)A_0+A||\;||(b+4)A_0+A||} 
    \end{split}
    \end{align*}
    \begin{align*} 
    \begin{split}
        =\frac{1}{||A_0||}\left[\frac{1}{2||A_0||}\left(\frac{1}{3||A_0||}\left(\frac{1}{||bA_0+A||}-\frac{1}{||(b+3)A_0+A||}\right)-\frac{1}{||A_0||}\left(\frac{1}{||(b+2)A_0+A||} \right.\right.\right. \\
         \left.\left.\left. -\frac{1}{||(b+3)A_0+A||}\right)\right)-\frac{1}{||A_0||}\left(\frac{1}{2||A_0||}\left(\frac{1}{||(b+1)A_0+A||}-\frac{1}{||(b+3)A_0+A||}\right) \right.\right. \\
         \left.\left. -\frac{1}{||A_0||}\left(\frac{1}{||(b+2)A_0+A||}-\frac{1}{||(b+3)A_0+A||}\right)\right)\right]\cdot \frac{1}{||(b+4)A_0+A||}
    \end{split}
    \end{align*}

    \begin{align*}
    \begin{split}
        = \frac{1}{||A_0||}\left[\frac{1}{2||A_0||}\left(\frac{1}{3||A_0||}\left[\frac{1}{4||A_0||}\left(\frac{1}{||bA_0+A||}-\ast \right)-\frac{1}{||A_0||}\left(\frac{1}{||(b+3)A_0+A||}-\ast \right)\right] \right.\right.\\ \left.\left. -\frac{1}{||A_0||}\left[\frac{1}{2||A_0||}\left(\frac{1}{||(b+2)A_0+A||}-\ast\right)-\frac{1}{||A_0||}\left(\frac{1}{||(b+3)A_0+A||}-\ast \right)\right]\right) \right. \\ \left. -\frac{1}{||A_0||}\left(\frac{1}{2||A_0||}\left[\frac{1}{3||A_0||}\left(\frac{1}{||(b+1)A_0+A||}-\ast\right)-\frac{1}{||A_0||}\left(\frac{1}{||(b+3)A_0+A||}-\ast\right)\right] \right.\right. \\  \left.\left. -\frac{1}{||A_0||}\left[\frac{1}{2||A_0||}\left(\frac{1}{||(b+2)A_0+A||}-\ast\right)-\frac{1}{||A_0||}\left(\frac{1}{||(b+3)A_0+A||}-\ast\right)\right]\right)\right]
    \end{split}
    \end{align*}
    \begin{align*}
    & \text{where } \ast:=\frac{1}{||(b+4)A_0+A||}
    \end{align*}

Thus when we take the sum over all $b$ from $1$ to $\infty$, the tails cancel in each difference, leaving us with only a few leading terms in each set of parentheses. Factoring the $\frac{1}{||A_0||^4}$ common to all the terms, we have that the sum equals 

\begin{align*}
\begin{split}
\frac{1}{||A_0||^4}\Bigg[\frac{1}{24}\left(\frac{1}{||A_0+A||}+\frac{1}{||2A_0+A||}+\frac{1}{||3A_0+A||}+\frac{1}{||4A_0+A||}\right)-\frac{1}{6} \left(\frac{1}{||4A_0+A||}\right) \\ -\frac{1}{4}\left(\frac{1}{||3A_0+A||}+\frac{1}{||4A_0+A||}\right)+\frac{1}{2}\left(\frac{1}{||4A_0+A||}\right) \\ -\frac{1}{6}\left(\frac{1}{||2A_0+A||}+ \frac{1}{||3A_0+A||}+ \frac{1}{||4A_0+A||}\right) +\frac{1}{2}\frac{1}{||4A_0+A||} \\ +\frac{1}{2}\left(\frac{1}{||3A_0+A||}+\frac{1}{||4A_0+A||}\right)-1\left(\frac{1}{||4A_0+A||}\right)\Bigg]
\end{split}
\end{align*}
\begin{align*}
\begin{split}
=\frac{1}{||A_0||^4}\Bigg[\frac{1}{||A_0+A||}\left(\frac{1}{24}\right) +\frac{1}{||2A_0+A||}\left(\frac{1}{24}-\frac{1}{6}\right)+\frac{1}{||3A_0+A||}\left(\frac{1}{24}-\frac{1}{4}-\frac{1}{6}+\frac{1}{2}\right) \\ +\frac{1}{||4A_0+A||}\left(\frac{1}{24}-\frac{1}{6}-\frac{1}{4}+\frac{1}{2}-\frac{1}{6}+\frac{1}{2}+\frac{1}{2}-1\right)\Bigg] 
\end{split}
\end{align*}

\begin{align*}
\begin{split}
=\frac{1}{24||A_0||^4}\Bigg[\frac{1}{||A_0+A||}\left(1\right)+\frac{1}{||2A_0+A||}\left(1-4\right)+\frac{1}{||3A_0+A||}\left(1-6-4+12\right)  \\  +\frac{1}{||4A_0+A||}\left(1-4-6+12-4+12+12-24\right)\Bigg]
\end{split}
\end{align*}
\begin{align*}
\begin{split}
=\frac{1}{24||A_0||^4}\left(\frac{1}{||A_0+A||}-\frac{3}{||2A_0+A||}+\frac{3}{||3A_0+A||}-\frac{1}{||4A_0+A||}\right),
\end{split}
\end{align*}
at which point the application of Lemma \ref{alg lemma} establishes the desired result. We now turn to the general case.

\noindent
\emph{Proof of Lemma \ref{lin alg craziness lemma}.\ \ }
    From Lemma \ref{cell volume lemma}, we know the sum of interest is $\frac{\sqrt n}{(n-1)!}$ times the sum taken over all integer $b$ from $b=1$ to $\infty$ of 
    \[\frac{1}{||bA_0+A_n||\;||(b+1)A_0+A_n||\;||(b+1)A_0+A_1+A_n||\cdots||(b+1)A_0+A_1+\ldots+A_n||}.\] Equivalently, let $D_0:=A_n+bA_0$ and $D_k:=A_n+bA_0+A_1+A_2+\ldots+A_{k-1}+A_k$ for 
    $k \in \{1,2,\ldots, n-1\}$. Then we are interested in 
    \begin{equation} \label{big sum in all b values lemma}
        \frac{\sqrt n}{(n-1)!}\sum_{b=1}^\infty \frac{1}{\prod_{k=0}^{n-1}||D_k||}
    \end{equation}

    Ignoring the summation and the constant in front for now, note that

    \begin{align*}
        &\frac{1}{||bA_0+A_n||\;||(b+1)A_0+A_n||\;||(b+1)A_0+A_1+A_n||\cdots||(b+1)A_0+A_1+\ldots+A_n||} \\
        >\ & \frac{1}{||bA_0+A_1+\ldots +A_n||\;||(b+1)A_0+A||\;||(b+2)A_0+A||\cdots||(b+n-1)A_0+A||},
    \end{align*}
since $b\ge 0$ and each $||A_i||\ge 0$, so we are only increasing the denominator by replacing the first line with the second.

    Using this inequality and a repeated partial fractions decomposition to produce telescoping differences, we claim that the sum in Expression \ref{big sum in all b values lemma} exceeds 
    \begin{align*}
        \begin{split}
            \frac{\sqrt n}{(n-1)!}\frac{1}{(n-1)!||A_0||^{n-1}}& \left(  \frac{{{n-2}\choose{0}}}{||A_0+A||}-\frac{{{n-2}\choose{1}}}{||2A_0+A||} \right. \\ &\left. +\frac{{{n-2}\choose{2}}}{||3A_0+A||}- \ldots + (-1)^{n-2} \frac{{{n-2}\choose{n-2}}}{||(n-1)A_0+A||}\right).
        \end{split}
    \end{align*}
    
The case for higher $n$ starts out the same way as $n=5$ but requires more steps of the partial fraction decomposition. After the final step, the number of terms we get when we cancel the tails in each set of parentheses equals the coefficient on the $||A_0||$ in the innermost denominator. This follows from the fact that 
\[\frac{1}{||rA_0+A||\;||(r+s)A_0+A||}=\frac{1}{||sA_0+A||}\left(\frac{1}{||rA_0+A||}-\frac{1}{||(r+s)A_0+A||}\right).\]

\noindent
When we take the sum over all $r$ from 1 to $\infty$, we get that the terms with coefficient $r+s$ or higher on the $A_0$ cancel, leaving exactly $s$ terms, namely $\frac{1}{||rA_0+A||}$, $\frac{1}{||(r+1)A_0+A||}, \ldots,$ and $\frac{1}{||(r+s-1)A_0+A||}$.

Clearly any time the term $\frac{1}{||rA_0+A||}$ appears for some integer $r < n-1$, we will also get the terms $\frac{1}{||(r+1)A_0+A||}$, $\ldots$, $\frac{1}{||(n-1)A_0+A||}$. This is because the innermost set of parentheses always has $\frac{1}{||b+(n-1)A_0+A||}$ subtracted, so $\frac{1}{||(n-1)A_0+A||}$ appears in every difference, possibly along with a sequence of consecutive coefficients on $A_1$ terms leading up to it.

In fact, the pattern is such that we get a second term exactly half the time, a third term a quarter of the time, and so on, with induction showing that the coefficients on the $\frac{1}{||(n-1)A_0+A||}$ terms are the multinomial coefficients for compositions appearing in their standard order (OEIS Sequence A124774) \cite{Slo}. The signs on each term correspond to how many levels of negatives appear in front, which in turn correspond to how many parts a given composition has, with an odd number getting a positive sign and an even number getting a negative sign. 

Once we have the coefficients for the $\frac{1}{||(n-1)A_0+A||}$ term in order, ascertaining the coefficients for the other terms is easy, since every other coefficient also appears with $\frac{1}{||(n-2)A_0+A||}$, every fourth with $\frac{1}{||(n-3)A_0+A||}$, every eighth with $\frac{1}{||(n-4)A_0+A||}$, etc, all the way down to $\frac{1}{||A_0+A||}$, which appears with only the first coefficient, which is always ${{n}\choose{n}}=1$.

Now the use of Lemma \ref{tree lemma} is apparent, since the coefficients are exactly the sums we have already examined in that proof.

Thus in the general case, still taking $A:=A_1+A_2+\ldots+A_n$, we have by Lemma \ref{tree lemma} that the sum over all $b$ from 1 to $\infty$ exceeds 
\begin{align*}
    \begin{split}
        \frac{\sqrt n}{(n-1)!}\frac{1}{(n-1)!||A_0||^{n-1}}\left(\frac{{{n-2}\choose{0}}}{||A_0+A||}-\frac{{{n-2}\choose{1}}}{||2A_0+A||}  +\frac{{{n-2}\choose{2}}}{||3A_0+A||} \right. \\ \left. -  \ldots + (-1)^{n-2} \frac{{{n-2}\choose{n-2}}}{||(n-1)A_0+A||}\right).
    \end{split}
\end{align*}

By Lemma \ref{alg lemma}, taking $x=||A_0||$ and $y=||A||$, we can simplify the above to \begin{align*}
    &\frac{\sqrt n}{(n-1)!}\frac{1}{(n-1)!||A_0||^{n-1}}\left(\frac{(n-2)!\cdot ||A_0||^{n-2}}{||A_0+A||\;||2A_0+A||\cdots||(n-1)A_0+A||}.\right) \\
    =& \frac{\sqrt n}{(n-1)!(n-1)||A_0||}\left(\frac{1}{||A_0+A||\;||2A_0+A||\cdots||(n-1)A_0+A||}\right).
\end{align*}

What actually matters going forward is just that this quantity, though very small, is clearly positive for any given value of $n$.
\qed

\subsection{Proof of Key Proposition \ref{prop on postive area} }

\begin{proof}
    As before, let $A:= A_1+A_2+\ldots+A_n.$ By Lemmas \ref{cell volume lemma} and \ref{lin alg craziness lemma}, we have that \begin{align*} 
    \frac{\lambda(R)}{\lambda(C)}&>\frac{\frac{\sqrt n}{(n-1)!\;(n-1)\;||A_0||}\left(\frac{1}{||A_0+A||\;||2A_0+A||\cdots||(n-1)A_0+A||}\right)}{\frac{\sqrt{n}}{(n-1)!\;||A_0||\;||A_1||\cdots||A_n||}} \\
    &=\frac{||A_1||\;||A_2||\cdots||A_n||}{(n-1)\left(||A_0+A||\;||2A_0+A||\cdots||(n-1)A_0+A||\right)}\\
    &>\frac{||A_1||^{n-1}}{(n-1)\left(||nA_n||\;||(n+1)A_n||\cdots||(2n-2)A_n||\right)} \\
    &>\frac{||A_1||^{n-1}}{(n-1)\left(||n^2A_1||\;||(n+1)nA_1||\cdots||(2n-2)nA_1||\right)} \\
    &= \frac{1}{(n-1)\left(n^2)[(n+1)n][(n+2)n]\cdots[(2n-2)n]\right)} \\
    &=\frac{1}{(n-1)(n)(n+1)(n+2)\cdots(2n-2)n^{n-1}},
    \end{align*}
    
\noindent
    where we have applied Lemmas \ref{vertices grow} and \ref{bounds on vertices} in the third and fourth lines. Since for any starting $n$, this value is a small positive constant, we know that $R$ occupies an appreciable portion of the cell $C$.
\end{proof}

\section{Algebraticity via Periodicity}\label{algebratiticity}
As mentioned in the introduction, one of the prime motivating questions for the development of multi-dimensional continued fraction algorithms is to find the correct generalization of the fact that a number is quadratic irrational if and only if its traditional continued fraction algorithm is eventually periodic.  This is the Hermite problem.

Thus the natural question  to ask is if the periodicity of the triangle map will determine something about the algebraic properties of the point $(\alpha_1, \ldots, \alpha_n)$.

\begin{thm} Let $$(\alpha_1, \ldots, \alpha_n) \in \triangle \subset \R^n$$
have an eventually periodic triangle sequence. Then $\alpha_1, \ldots, \alpha_n$ are all in the same algebraic number field whose degree is less than or equal to $n$.
    \end{thm}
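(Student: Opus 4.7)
The plan is to translate the problem into linear algebra via the homogeneous model, where the triangle map becomes left multiplication by the integer matrix $T_b \in \mathrm{GL}_{n+1}(\Z)$ with $\det T_b = \pm 1$. Write the eventually periodic triangle sequence as a preperiod $(b_0,\ldots,b_{k-1})$ followed by a period $(c_0,\ldots,c_{p-1})$, and set $N = T_{b_{k-1}}\cdots T_{b_0}$ and $M = T_{c_{p-1}}\cdots T_{c_0}$. Lift $(\alpha_1,\ldots,\alpha_n)$ to $\mathbf{x} = (1,\alpha_1,\ldots,\alpha_n)^\top \in \triangle^H$. Then $N\mathbf{x}$ is a positive scalar multiple of $\mathbf{z}:=(1,\beta_1,\ldots,\beta_n)^\top$, where $(\beta_1,\ldots,\beta_n) = T^{(k)}(\alpha_1,\ldots,\alpha_n)$ has purely periodic triangle sequence of period $p$.

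The key step is to extract an eigenvector relation from periodicity. Because the sliced map returns $\mathbf{z}/\|\mathbf{z}\|$ to itself after $p$ iterations, we obtain $M\mathbf{z} = \mu\, \mathbf{z}$ for some real $\mu > 0$. So $\mu$ is a root of the characteristic polynomial of the integer matrix $M$, hence algebraic of degree at most $n+1$ over $\Q$. Since each $T_b$ sends the open cone $\triangle^H$ into itself, so does $M$, and a Perron-type argument (leaning on the weak-convergence analysis of Section \ref{weak convergence section} applied along the periodic orbit: the cells $\triangle^S(c_0,\ldots,c_{mp-1})$ collapse to $\mathbf{z}$ exactly when $\mu$ is strictly dominant on the cone) identifies $\mu$ as the unique eigenvalue of $M$ admitting a positive eigenvector in $\triangle^H$, with one-dimensional eigenspace. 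Solving $(M - \mu I)\mathbf{v} = 0$ over $\Q(\mu)$ and normalizing the leading coordinate of $\mathbf{v}$ to $1$ then recovers $\mathbf{z}$ uniquely, so $\beta_1,\ldots,\beta_n \in \Q(\mu)$.

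To pull back to the $\alpha_i$, use that $N^{-1}$ has integer entries. Then $\mathbf{x}$ is the unique scalar multiple of $N^{-1}\mathbf{z}$ with first coordinate equal to $1$, which writes each $\alpha_i$ as a $\Q$-rational function of the $\beta_j$'s; hence $\alpha_i \in \Q(\beta_1,\ldots,\beta_n) \subseteq \Q(\mu)$. Reading off the first row of $M\mathbf{z} = \mu\mathbf{z}$ gives the converse inclusion $\mu \in \Q(\beta_1,\ldots,\beta_n)$, so $\Q(\alpha_1,\ldots,\alpha_n) = \Q(\mu)$ is a single number field, whose degree equals the degree of the minimal polynomial of $\mu$.

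The step I expect to be the main obstacle is pinning down that degree by the number $n$ claimed in the statement. The direct eigenvalue argument only bounds it by $n+1$, the size of $M$. Sharpening to $n$ requires producing a proper $\Q$-rational factor of $\det(\lambda I - M)$ not containing $\mu$, or equivalently an $M$-invariant $\Q$-subspace of $\Q^{n+1}$ meeting the Perron line only at the origin. The natural handle is the rigid companion-like shape of each $T_b$, in which rows $0,\ldots,n-1$ perform a cyclic shift and only the bottom row carries the parameter $b$; together with $\det M = \pm 1$ this structure should force a reciprocal-style symmetry of the characteristic polynomial and hence a rational factor of positive degree, uniformly in $p$ and the $c_i$. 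Verifying that such a factor always exists and always isolates $\mu$ in a factor of degree at most $n$ is the delicate point.
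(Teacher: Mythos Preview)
Your approach is exactly the paper's: eventual periodicity makes the triangle sequence bounded, so the weak-convergence theorem of Section~\ref{weak convergence section} forces the nested cells to collapse to a single point, and then the homogeneous lift $(1,\beta_1,\ldots,\beta_n)$ becomes an eigenvector of a product $M\in GL_{n+1}(\Z)$. The paper states this in two sentences and defers the linear-algebra details to \cite{Gar}; you have written them out. One small reordering: weak convergence is what justifies the step from ``$\beta$ and $T^{(p)}(\beta)$ have the same triangle sequence'' to ``$T^{(p)}(\beta)=\beta$'', so it is needed \emph{before} the eigenvector relation, not only inside the Perron parenthetical.

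Your last paragraph is chasing a phantom. The eigenvector argument yields $[\Q(\mu):\Q]\le n+1$, and that is the correct bound: for $n=1$ (the Gauss map) eventually periodic continued fractions produce quadratic irrationals, and the introduction itself says that for the original triangle map a pair $(\alpha_1,\alpha_2)$ lands in a cubic field. The ``$n$'' in the theorem statement is a slip for ``$n+1$'' (the line ``the $n=3$ case in \cite{Gar, Ass}'' just below the theorem is the same indexing slip). There is no hidden rational factor of $\det(\lambda I-M)$ to extract; already for a single $T_b$ with $n=1$ the characteristic polynomial $\lambda^2+(b+1)\lambda-1$ is generically irreducible over $\Q$. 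Your argument through $\Q(\alpha_1,\ldots,\alpha_n)\subseteq\Q(\mu)$ with $[\Q(\mu):\Q]\le n+1$ is the whole proof.
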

    This theorem was the main goal in the $n=3$ case in \cite{Gar, Ass}.
\begin{proof}  If the triangle sequence $(b_0, b_1, b_2, \ldots)$ is eventually periodic, then certainly the sequence is bounded.  Hence the cells $\triangle(b_0, b_1, \ldots , b_m)$ will converge to a single point.  Then the arguments in \cite{Gar} for the $n=2$ case immediately apply, giving us the result.

\end{proof}

As a sketch for why this theorem is true, consider the purely periodic case.  The weak convergence, coupled with the matrices that arise in the homogeneous triangle map, lead to the vector 
$$(1, \alpha_1, \ldots, \alpha_n) \in \R^{n+1}$$
being an eigenvector of a matrix in $SL(n+1, \Z)$, which is enough to get the result.

The meat of the Hermite question is the converse.  If $\alpha_1, \ldots, \alpha_n$ are all in the same algebraic number field whose degree is less than or equal to $n$, must it be the case that the corresponding triangle sequence is eventually periodic.  We have no idea, though we suspect that this is not true.

In general, even for the cubic case, this is the difficult direction to try to prove.  There has recently been quite a bit of progress, though, in the cubic case by Karpenkov \cite{Karpenkov 2, Karpenkov 3}.

\section{Ergodicity of the Triangle Map}\label{ergodic proof}

\subsection{Basics of Ergodic Theory}

We briefly introduce the ideas from ergodic theory that will be necessary to this paper. For more complete background, see Billingsley \cite{Bil},  Dajani and Kraaikamp \cite{Daj}, Kesseb\"{o}hmer, Munday and Stratmann \cite{Kes}, and Silva \cite{Silva}.

Given a space $X$ endowed with measure $\mu$, we say a property holds \textbf{almost everywhere} if it fails only on a subset of measure 0.

A transformation $T:X\rightarrow X$ is said to be a \textbf{non-singular} transformation on the system $(X, \mathcal B, \mu)$ if for any $B \in \mathcal B$, $\mu(B)=0$ if and only if $\mu(T^{-1}(B))=0$. Note that the Farey, Gauss, Slow Triangle, and Triangle maps are all non-singular.

We say a function $T:X\rightarrow X$ \textbf{is measure-preserving}, or preserves the measure $\mu$, if for all measurable $A \subseteq X$, $\mu(A)=\mu(T^{-1}(A))$.

We say a function $T: X \rightarrow X$ is \textbf{ergodic} if whenever $T^{-1}(A)=A,$ where $A$ is a measurable subset of $X$, then either $\mu(A)=0$ or $\mu(X\setminus A)=0$. 

Given two measures $\mu$ and $\nu$ on the same space, we say $\nu$ is \textbf{absolutely continuous with respect to $\mu$} if $\mu(A)=0 \implies \nu(A)=0$, where $A \subseteq X$ . We denote this $\nu \le \le \mu$. If $\nu \le \le \mu$ and $\mu \le\le\nu$, then we say the two measures are \textbf{equivalent} and write $\mu \sim \nu$. We say equivalent measures are part of the same \emph{measure class} \cite{Kes}.

If two sets $X$ and $Y$ are equal up to a set of measure 0 using a given measure $\mu$, i.e., if $\mu(X \setminus Y)=0$ and $\mu(Y \setminus X)=0$, then we write $X = Y \pmod \mu$.

It is classical that the Gauss map on the unit interval is ergodic.  Besides the above reference, see Hensley \cite{Hensley} and Rockett and Sz\"{u}sz \cite{Rockett-Sz\"{u}sz}

\subsection{The jump transformation $g$}

{\it For the rest of this section, all the maps and domains will be in the sliced version.  Thus when we write $T$ and $\triangle$, we mean $T^S$ and $\triangle^S$, respectively.}

This subsection is a straightforward generalization of the first three paragraphs of Section 4.1 in \cite{Mes}.

 In keeping with \cite{Mes}, we introduce the jump transformation $g$. This will be useful because it allows us to jump to the points in the triangle sequence where our guaranteed cell shrinkage (i.e., our strings of zeros) occurs.  And as a reminder,  the original triangle map $T$ is ergodic if and only if the sliced triangle map $T^S$ is ergodic. 

First define 
\begin{align*}
    \begin{split}
        \mathcal B_m = \{\triangle(b_1, \ldots, b_m): \text{ the first appearance of a string of $n-1$} \\ \text{zeros in the sequence of $b_j$'s ends at $b_m$};\ i=1,2,\ldots, n\}
    \end{split}
\end{align*} and 
\[B_m = \bigcup_{E \in \mathcal B_m} E.\] 
Thus $\mathcal B_m$ is the set of all subcells of  $\triangle$ consisting of points whose triangle sequences have the first string of $n-2$ consecutive zeros being $b_{m-(n-2)}=0$, $b_{m-(n-3)}=0,$ $\ldots, b_m=0$, and $B_m$ is the cell consisting of their union. For these definitions to make sense, we need $m \ge n-2$. By Corollary \ref{b_k=0 inf often cor}, we have that 
$$\bigcup_m B_m=\triangle \pmod \lambda.$$

Note that mod $\lambda$, the sets $E$ such that $E \in \bigcup_m B_m$ form a partition $\mathcal{P}^{(1)}$ of $\triangle$. We now define 
\begin{equation*} \label{jump transformation def}
    g: \triangle \rightarrow \triangle, \quad g(u)=f^k(u)\ \ \text{ if } u \in B_k
\end{equation*}

We now look at finer and finer partitions of $\triangle,$ defined inductively as follows.  Once we have a  partition $\mathcal{P}^{(k)}$ defined, we define $\mathcal{P}^{(k+1)}$ as the partition consisting of all intersections of sets $H \in \mathcal{P}^{(k)}$ and sets $g^{-1}(G)=\bigcup_m(f^{-m}(G) \cap B_m), G \in \mathcal{P}^{(k)}.$ 

In more plain language, $\mathcal{P}^{(1)}$ is the set of all 
$$\triangle(b_0, \ldots, b_{m-n},b_{m-(n-1)},\underbrace{0,0,\ldots, 0}_{n-1 \text{ zeroes}}).$$
where all of the $b_i$ are positive, $\mathcal{P}^{(2)}$ is the set of all 
$$\triangle(b_0, \ldots, b_{m},\underbrace{0,0,\ldots, 0}_{n-1 \text{ zeroes}}),c_0, \ldots, c_{0}, \ldots, c_{k},\underbrace{0,0,\ldots, 0}_{n-1 \text{ zeroes}})),$$
where all of the $b_i$ and $c_j$ are positive, $\mathcal{P}^{(2)}$ is the set of all corresponding cones with three ``patches'' of $n-2$ zeros in a row, etc.

The key for us is that for all $\triangle(b_1, \ldots, b_m)\in \mathcal B_m, $ we have that not only is 
$$g(\triangle(b_1, \ldots, b_m)) = \triangle,$$
but that the map $g$ is one-to-one and onto.
This will mean that  for any cone $C\in \mathcal{P}^{(k)}$, 
$$g(C)= \mathcal{P}^{(k-1)}, g^{(2)}(C)= \mathcal{P}^{(k-2)}, \ldots, g^{(k-1)}(C)= \mathcal{P}^{(1)}, g^{(k)}(C)= \triangle.$$

We have that 
$$\bigcup_{C\in \mathcal{P}^k} C=\triangle \pmod \lambda.$$

As $k$ increases, we have by Theorem \ref{weakConvThm} that the diameter of all the relevant cells tends to 0. Thus $\mathcal C^{(k)} \uparrow \mathcal B$ where $\mathcal B$ is the Borel algebra and the up arrow means $\mathcal C^{(k)}$ is an increasing sequence of sets whose limit as $k\rightarrow \infty$ is $\mathcal B$.

We will be needing the following critical fact.  If $A$ is a subset with positive Lebesgue measure in $\triangle,$ then there is $k$ and a cone
$C\in \mathcal{P}^k$ so that 
$$\lambda (C\cap A) = \lambda(C),$$
meaning that $C$ is contained in $A$ almost everywhere.

To show ergodicity of $T$, all we need do is to show that the jump transformation $g$ is ergodic, by:
\begin{lem}

If $g$ is ergodic then $T$ is ergodic. 

\end{lem}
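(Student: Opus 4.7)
The plan is to reduce $T$-ergodicity to $g$-ergodicity by showing that every $T$-invariant measurable set is automatically $g$-invariant modulo Lebesgue measure. Since $g$ is built out of iterates of $T$, $T$-invariance should be strictly stronger than $g$-invariance, and then the conclusion will follow at once from the hypothesis.

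Concretely, I would fix a Borel set $A \subseteq \triangle$ with $T^{-1}(A) = A$, and aim to show that either $\lambda(A) = 0$ or $\lambda(\triangle \setminus A) = 0$. Iterating the invariance gives $T^{-m}(A) = A$ for every $m \geq 0$. Using the piecewise definition $g|_{B_m} = T^m|_{B_m}$, I would then compute
\[
g^{-1}(A) \;=\; \bigcup_{m} \bigl(T^{-m}(A) \cap B_m\bigr) \;=\; \bigcup_{m} \bigl(A \cap B_m\bigr) \;=\; A \cap \bigcup_{m} B_m.
\]
By Corollary \ref{b_k=0 inf often cor}, $\bigcup_{m} B_m = \triangle \pmod{\lambda}$, so this chain of equalities yields $g^{-1}(A) = A \pmod{\lambda}$. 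Thus $A$ is $g$-invariant modulo $\lambda$, and the assumed ergodicity of $g$ forces $\lambda(A) \in \{0, \lambda(\triangle)\}$, finishing the proof.

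There is essentially no serious obstacle here: the only small technical point is that $A$ is only $g$-invariant up to a Lebesgue-null set rather than on the nose, but this is standard and harmless, since one can replace $A$ by the measurable-equivalent set $\bigcap_{k\geq 0}\bigcup_{j\geq k} g^{-j}(A)$ (or any similar cleanup) to obtain a genuinely $g$-invariant set of the same measure before invoking ergodicity. The real content of the lemma is the one-line observation that $g^{-1}(A)=A \pmod{\lambda}$ whenever $T^{-1}(A)=A$, which is immediate from the fact that $g$ is a first-return-type acceleration of $T$ along the partition $\{B_m\}$.
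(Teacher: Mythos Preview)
Your proof is correct and follows essentially the same argument as the paper: both use the identity $g^{-1}(A)=\bigcup_m\bigl(T^{-m}(A)\cap B_m\bigr)$ to show that any $T$-invariant set is $g$-invariant (mod $\lambda$), and then invoke the ergodicity of $g$. Your version is in fact slightly more careful than the paper's, since you make explicit the role of $\bigcup_m B_m=\triangle\pmod{\lambda}$ and address the mod-null-set cleanup.
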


\begin{proof}

We will show that if $g$ is ergodic then $T$ is ergodic. 
    
This follows from the fact that $g^{-1}(\Omega)=\bigcup_m((T)^{-m}(\Omega) \cap B_m)$. Thus any measurable set that is its own preimage under $f$ would also be its own preimage under $g$, hence if $T$ were not ergodic, $g$ could not be either. By contrapositive, then, if $g$ is ergodic, $T$ must be as well.
\end{proof}

\subsection{The jump transformation  is ergodic}
This is a straightforward generalization of Lemma 4.1 in \cite{Mes}.

We want to show that $g:\triangle\rightarrow \triangle$
 is ergodic with respect to Lebesgue measure $\lambda.$

 Start with a measurable set $\Omega$ of $\triangle$ with 
 $$g^{-1}(\Omega) = \Omega, a.e.$$
 We want to show that 
 $$\lambda(\Omega) =0 \; \mbox{or} \; \Omega= \triangle, a.e. $$

 We will assume that $\lambda(\Omega) \neq 0,$ as otherwise we would be done.
 Suppose  $ \Omega\neq \triangle, a.e.$ Then we must have that the  complement has positive measure:
 $$\lambda(\triangle -\Omega) >0.$$
 All of our previous work in showing that the elements of the partitions 
$\mathcal{P}^{(k)}$  approach single points as $k\rightarrow \infty$ will mean that there must be a $k$ and an element of $C\in  \mathcal{P}^{(k)}$ so that $C$ is contained in $\triangle -\Omega, a.e.$ Thus we must have 
that there is a $C\in  \mathcal{P}^{(k)}$  for some $k$ with 
$$\lambda(\Omega \cap C) =0.$$

The next step is to show that for every $C\in  \mathcal{P}^{(k)}$  for any $k$, we must have 
$$\lambda(\Omega \cap C) >0,$$
giving us our needed contradiction. 

Start with some $C\in  \mathcal{P}^{(k)}$.  As $\Omega$ is an invariant set, we have 
$$\Omega \cap C= g^{(-1)}\Omega \cap C = \ldots =g^{(-k)}\Omega \cap C . $$
We know then that 
$$g^{(k)}(\Omega \cap C) = g^{(k)}\left( g^{(-k)}\Omega\right)\cap g^{(k)}(C) = \Omega. $$

Thus 
$$\lambda(\Omega \cap C) = \int_{\Omega} \mbox{Jac}(g^{(k)}) \mathrm{d} \lambda.$$
Luckily the Jacobian $\mbox{Jac}(g^{(k)})$ is not hard to calculate, as is discussed, again, in \cite{Mes}. Let 
$A=(A_0, \ldots, A_n) $ be the matrix describing the cone $C$. Then
\begin{eqnarray*}
\lambda(\Omega \cap C) &=& \int_{\Omega} \mbox{Jac}(g^{(k)}) \mathrm{d} \lambda\\
&=& \underset{\Omega }{\int} \frac{d\lambda}{(||A_0||x_0+||A_1||x_1+\ldots + ||A_n||x_n)^{n+1}} \\ 
&\geq& \frac{\lambda(\Omega)}{||A_{\mbox{max}}||^{n+1}}\\
&>&0.
    \end{eqnarray*}
where $||A_{\mbox{max}}||:=\max_{1 \le i \le n} ||A_i||$.

We have our contradiction, meaning that the map $g$ is indeed ergodic.

\section{Invariant Measures}\label{fast invariant measure}
\subsection{Invariant measures}

We now know that the triangle map
 $T:\triangle \rightarrow \triangle$, given by  

$$T(x_1, \ldots, x_n) = \left(\frac{x_2}{x_1}, \ldots , \frac{x_n}{x_1}, \frac{1-x_1-b x_n}{x_1}\right)$$
for any $(x_1, \ldots , x_n) \in \triangle_b$
is ergodic with respect to Lebesgue measure.  But this is not an invariant measure, as for almost any set $A$ of $\triangle$, we have that 
$$\lambda(A) \neq \lambda(T^{-1}(A)).$$
The natural question is to find an invariant measure that is absolutely continuous with respect to Lebesgue measure, a question both asked and answered by Gauss in the $n=1$ case, and known for $n=3$ \cite{Gar3}.
Our goal for this section is 
\begin{thm}\label{Invariant measure} The invariant measure for the triangle map $T$ in dimension $n$ is 
$$\frac{\mathrm{d} \lambda}{x_1x_2\cdots x_{n-1}(1+x_n)}.$$
\end{thm}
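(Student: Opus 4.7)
The plan is to verify invariance directly by showing the density $h(x) := 1/(x_1 \cdots x_{n-1}(1+x_n))$ is a fixed point of the Perron--Frobenius (transfer) operator associated to $T$. Since each branch $T|_{\triangle_b}$ is a $C^1$ bijection onto $\triangle$, a change of variables shows that $h\,d\lambda$ is $T$-invariant precisely when
$$h(x) = \sum_{b\ge 0} \frac{h(y^{(b)}(x))}{|\det DT(y^{(b)}(x))|},$$
where $y^{(b)}(x)$ is the unique preimage of $x$ lying in $\triangle_b$. So the proof reduces to three tasks: writing down the branches, computing the Jacobian, and performing the sum.

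For the branches, solving $T(y)=x$ on $\triangle_b$ forces $y_i = y_1 x_{i-1}$ for $i \ge 2$, and then the last component reduces to $y_1(1+bx_{n-1}+x_n) = 1$. Writing $D_b := 1+bx_{n-1}+x_n$, I obtain $y^{(b)}(x) = (1/D_b,\ x_1/D_b,\ \ldots,\ x_{n-1}/D_b)$. For the Jacobian, I would establish $|\det DT(y)| = 1/y_1^{n+1}$, either by induction on $n$ or by a cofactor expansion of $DT(y)$ along the one column carrying the $b$-dependent entries; as sanity checks, $n=1$ recovers the Gauss Jacobian $1/y^2$, and the fact that $T$ lifts to an integer unimodular matrix in the homogeneous picture of Subsection \ref{Homogeneous Triangle Map} explains why no factor other than $y_1^{n+1}$ can appear.

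To finish, substitute. Using $y_n^{(b)} = x_{n-1}/D_b$ and the identity $1 + y_n^{(b)} = D_{b+1}/D_b$, where $D_{b+1} = D_b + x_{n-1}$, one computes $h(y^{(b)}) = D_b^{n}/(x_1 \cdots x_{n-2}\,D_{b+1})$, so each summand in the transfer operator equals $1/(x_1 \cdots x_{n-2}\,D_b D_{b+1})$. Because $D_{b+1} - D_b = x_{n-1}$, partial fractions give
$$\frac{1}{D_b D_{b+1}} = \frac{1}{x_{n-1}}\left(\frac{1}{D_b} - \frac{1}{D_{b+1}}\right),$$
and summing over $b \ge 0$ telescopes to $1/(x_{n-1} D_0) = 1/(x_{n-1}(1+x_n))$. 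Multiplying in the residual factor $1/(x_1 \cdots x_{n-2})$ recovers exactly $h(x)$, proving $\mathcal{L}h = h$. The principal obstacle is the clean Jacobian formula $|\det DT(y)| = 1/y_1^{n+1}$; once that is in hand the rest is bookkeeping driven by the single structural identity $D_{b+1} - D_b = x_{n-1}$, which is precisely what makes the infinite sum collapse.
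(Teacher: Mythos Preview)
Your proposal is correct and follows essentially the same approach as the paper: both verify that $h$ is a fixed point of the transfer operator by computing the inverse branches $y^{(b)} = (1/D_b, x_1/D_b, \ldots, x_{n-1}/D_b)$ with $D_b = 1 + b x_{n-1} + x_n$, using the Jacobian weight $1/D_b^{\,n+1}$, and then collapsing the resulting sum $\sum_b 1/(D_b D_{b+1})$ via the same partial-fractions telescoping based on $D_{b+1} - D_b = x_{n-1}$. The only cosmetic difference is that the paper writes out $1 + k x_{n-1} + x_n$ in full and points to the $n=2$ case in \cite{Gar3} for the Jacobian, whereas you package things with the symbol $D_b$ and sketch the Jacobian argument directly.
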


We will prove this using the rhetoric of the transfer operator. Recall that for a for a dynamical system with the map $T: X \rightarrow X$, the \textbf{transfer operator}, denoted $\mathcal L$ or $\mathcal L_T$,  sends measurable functions on $X$ to measurable functions on $X$ such that for all measurable subsets $A$ of $X$, we have $$\int_{T^{-1}(A)}f(x)\mathop{dx}=\int_A\mathcal L_Tf(x)\ dx.$$ 
The transfer operator is a common tool for verifying candidate invariant measures, because an invariant measure must be an eigenfunction with largest eigenvalue 1 for the transfer operator corresponding to a given map.  (For more on background of transfer operators, see Hensley \cite{Hensley},  Iosifescu and Kraaikamp \cite{I-K}, Kesseb\"ohmer, Munday and Stratmann \cite{Kes} and Schweiger \cite{Sch}, and for more general background, see Baladi \cite{Baladi00}.)

\begin{defn} The transfer operator $\mathcal L$ for any differentiable map $T$ acting on a measurable real-valued fucntion $f$  is
\[\mathcal L (f)(p) = \sum_{q:T(q)=p} \frac{1}{|\text{Jac}(T(q))|}f(q). \  \text{\cite{Gar3}}\]
\end{defn}

This section is a straightforward generalization of the $n=2$ case given in \cite{Gar3}.  Hence we simply must calculate the transfer operator for the $n$-dimensional  Triangle map and then verify that  our candidate function is an invariant measure. Since it does not require creative new ideas, we omit some of the background from dynamical systems.

We can directly calculate that 
\[T^{-1}(x_1, \ldots, x_n)=\left( \frac{1}{1+kx_{n-1}+x_n},\frac{x_1}{1+kx_{n-1}+x_n},\ldots, \frac{x_{n-1}}{1+kx_{n-1}+x_n}\right) \]
(The method is to look at the homogeneous version of the triangle map, giving us that $T^H$ can be described as an $(n+1) \times (n+1)$ matrix.  Then the inverse of $T^H$ is simply the inverse of this matrix.  To get $T^{-1}$, we simply then ``dehomogenize,'' which in this case means divide by the first coordinate, giving us the above $T^{-1}.$)

Then we get that 
\begin{align*}
    \mathcal Lf(x_1, &\ldots, x_n) \\
    =& \sum_{k=0}^\infty \frac{1}{(1+kx_{n-1}+x_n)^{n+1}}f\left(\frac{1}{1+kx_{n-1}+x_n}, \frac{x_1}{1+kx_{n-1}+x_n}, \ldots \frac{x_{n-1}}{1+kx_{n-1}+x_n}\right).
\end{align*}
(Again, while the calculations are straightforward, though a bit messy, but they are completely analogous to the $n=2$ case that is  worked out in \cite{Gar3}.)
Now for the proof of Theorem \ref{Invariant measure} .

\begin{proof}
Let $f(x_1,\ldots, x_n)=\frac{1}{x_1x_2\cdots x_{n-1}(1+x_n)}$.

Then we have 
\begin{align*}
\mathcal L f(x_1, \ldots ,x_n)&=\sum_{k=0}^\infty \frac{1}{(1+kx_{n-1}+x_n)^{n+1}}\left[\frac{(1+kx_{n-1}+x_n)^{n-1}}{x_1x_2\cdots x_{n-2}}\cdot\frac{1}{1+\frac{x_{n-1}}{1+kx_{n-1}+x_n}}\right] \\
&=\sum_{k=0}^\infty \frac{1}{(1+kx_{n-1}+x_n)^{n+1}}\left[\frac{(1+kx_{n-1}+x_n)^{n-1}}{x_1x_2\cdots x_{n-2}}\cdot\frac{1+kx_{n-1}+x_n}{1+kx_{n-1}+x_n+x_{n-1}}\right] \\
&=\frac{1}{x_1x_2\cdots x_{n-2}}\sum_{k=0}^\infty \frac{1}{1+kx_{n-1}+x_n}\left[\frac{1}{1+(k+1)x_{n-1}+x_n}\right] \\
&=\frac{1}{x_1x_2\cdots x_{n-2}}\sum_{k=0}^\infty \frac{1}{x_{n-1}}\left(\frac{1}{1+kx_{n-1}+x_n}-\frac{1}{1+(k+1)x_{n-1}+x_n}\right) \\
&=\frac{1}{x_1x_2\ldots x_{n-1}}\left(\frac{1}{1+x_n}-\frac{1}{1+x_{n-1}+x_n}+\frac{1}{1+x_{n-1}+x_n}- \ldots \right) \\
&=\frac{1}{x_1x_2\cdots x_{n-1}(1+x_n)} \\
&=f(x_1,x_2,\ldots, x_n). \\
\end{align*}
Since $\mathcal Lf(x_1,x_2,\ldots, x_n)=f(x_1, x_2, \ldots, x_n)$, we conclude that $f(x_1,\ldots,x_n)=\frac{1}{x_1x_2\cdots x_{n-1}(1+x_n)}$ is an eigenfunction for the transfer operator, hence this is the invariant measure for the $n$-dimensional triangle map on the simplex $\triangle$ with the $n+1$ vertices $(0,0,\ldots, 0, 0),$ $(1,0, \ldots,0),$ $(1,1,0, \ldots, 0),$ $(1,1,1,0, \ldots, 0), \ldots,$ and $(1,1, \ldots, 1, 1)$.
\end{proof}

\subsection{Normalizing Constant}

For the simplex $\triangle$ in $\R^n$, we have that 
$$\int_{\triangle} \frac{\mathrm{d} \lambda}{x_1x_2\cdots x_{n-1}(1+x_n)}\neq 1.$$
This subsection finds for each $n$, 
$$C_n = \int_{\triangle} \frac{\mathrm{d} \lambda}{x_1x_2\cdots x_{n-1}(1+x_n)}.$$

In the $n=1$ case (the Gauss map), we have the classical
\[C_1=\int_0^1 \frac{1}{1+x}dx =\log(2).\]

\begin{thm}
\label{zeta thm}
    For $n\ge 2$, the normalizing constant for the invariant measure of the Triangle map in $\R^{n+1}$ is 
    $$C_n=\frac{2^{n-1}-1}{2^{n-1}}\zeta(n),$$
    where $\zeta(n)$ is the Riemann zeta function.
\end{thm}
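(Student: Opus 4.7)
The plan is to compute $C_n$ directly as an iterated integral over the simplex $\triangle = \{1 \geq x_1 \geq \cdots \geq x_n \geq 0\}$, peeling off one variable at a time starting from the innermost $x_n$. First I would observe that
\[
\int_0^{x_{n-1}} \frac{dx_n}{1+x_n} = \log(1+x_{n-1}),
\]
and then expand this via the Taylor series $\log(1+t) = \sum_{k=1}^\infty (-1)^{k+1} t^k / k$, valid for $0 \leq t \leq 1$. The key observation is that each remaining integration is of the shape $\int_0^{x_{j-1}} x_j^{-1}(\cdot)\, dx_j$, and the factor $1/x_j$ from the invariant density exactly cancels one power of $x_j$ coming from the series, raising the exponent on $k$ in the denominator by one each time.

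I would then prove by downward induction on $j$ that after integrating out $x_n, x_{n-1}, \ldots, x_{j+1}$, the remaining expression equals
\[
\int_0^1 dx_1 \int_0^{x_1} dx_2 \cdots \int_0^{x_{j-1}} dx_j \;\frac{1}{x_1 x_2 \cdots x_j}\sum_{k=1}^\infty \frac{(-1)^{k+1} x_j^k}{k^{n-j}}.
\]
The inductive step is the elementary identity
\[
\int_0^{x_{j-1}} \frac{1}{x_j}\cdot \frac{x_j^k}{k^{n-j}}\,dx_j = \frac{x_{j-1}^k}{k^{n-j+1}},
\]
applied termwise. Taking $j=1$ and performing the final integration yields
\[
C_n = \sum_{k=1}^\infty \frac{(-1)^{k+1}}{k^n} = \eta(n),
\]
the Dirichlet eta value at $n$.

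To finish, I would invoke the classical identity $\eta(n) = (1-2^{1-n})\zeta(n)$, proved by splitting the zeta sum into even and odd index contributions: $\zeta(n) - \eta(n) = 2\sum_{k\ \text{even}} k^{-n} = 2^{1-n}\zeta(n)$. This gives
\[
C_n = (1-2^{1-n})\zeta(n) = \frac{2^{n-1}-1}{2^{n-1}}\zeta(n),
\]
as claimed.

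There is no serious obstacle here; the computation generalizes cleanly from the $n=2$ case worked out in \cite{Gar3}. The only technical point is justifying the interchange of summation and integration at each step, which follows from Tonelli's theorem once one observes that the alternating series can be grouped into consecutive positive pairs, or alternatively from uniform convergence on any compact subinterval $[0, 1-\epsilon]$ combined with a short argument to handle the endpoint. I would note briefly that the final integration $\int_0^1 x_1^{k-1} dx_1 = 1/k$ is harmless since the $1/x_1$ singularity is cancelled by the leading $x_1^k$ from the series.
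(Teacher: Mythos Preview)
Your proposal is correct and is essentially the same argument as the paper's: both expand the innermost integrand as a power series, integrate term by term (each integration in $x_j$ against the factor $1/x_j$ bumps the exponent on $k$ by one), arrive at the Dirichlet eta series $\sum_{k\ge 1}(-1)^{k+1}/k^n$, and finish with the identity $\eta(n)=(1-2^{1-n})\zeta(n)$. The only cosmetic difference is that you first compute $\int_0^{x_{n-1}}\frac{dx_n}{1+x_n}=\log(1+x_{n-1})$ and then expand, whereas the paper expands $1/(1+x_n)$ before integrating; you are also slightly more explicit about justifying the sum--integral interchange.
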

As an aside, we were mildly surprised to see the zeta function appear in these calculations.

Before we prove this, we  recall that if we define 
  \[\zeta_{even}(n)=\sum_{k=1}^\infty \frac{1}{(2k)^n},\] 
  then 
    \[\zeta_{even}(n)=\frac{1}{2^n}\zeta(n).\]
(This is a simple calculation.)

\begin{proof}
We need to calculate
\[C_n=\int_{\triangle} \frac{1}{x_1x_2\cdots x_{n-1}(1+x_n)} dx_nd x_{n-1}\ldots dx_1.\]

Parametrizing $\triangle$, this is equivalent to
\begin{equation}\label{C}
    C=\int_0^1\int_{0}^{x_1}\int_0^{x_2}\cdots \int_0^{x_{n-1}} \frac{1}{x_1x_2\cdots x_{n-1}(1+x_n)} dx_ndx_{n-1}\ldots dx_1.
\end{equation}

Using the Taylor series expansion $\frac{1}{1+x_n}=1-x_n+x_n^2-x_n^3+x_n^4-\ldots$, we have \begin{equation} \label{C as big integral}
    C=\int_0^1\int_{0}^{x_1}\int_0^{x_2}\cdots \int_0^{x_{n-1}} \frac{1}{x_1x_2\cdots x_{n-1}}\left(1-x_n+x_n^2-x_n^3+x_n^4-\ldots\right) dx_ndx_{n-1}\ldots dx_1.
\end{equation}

We proceed by induction. 
For our inductive hypothesis, we will take a close relative of the statement of the theorem, namely that 
\[\int_{0}^{x_1}\int_0^{x_2}\cdots \int_0^{x_{n-1}} \frac{1}{x_1x_2\cdots x_{n-1}(1+x_n)} dx_ndx_{n-1}\ldots dx_2=1-\frac{x_1}{2^{n-1}}+\frac{x_1^2}{3^{n-1}}-\frac{x_1^3}{4^{n-1}}+\ldots.\]

For the base case, let $n=2$. Then the integral from \eqref{C as big integral} is just 
\begin{align*}
    &\int_0^{x_1}\frac{1}{x_1}(1-x_2+x_2^2-x_2^3+x_2^4-\ldots) dx_2 \\
    =&\frac{1}{x_1}\left[x_2-\frac{x_2^2}{2}+\frac{x_2^3}{3}-\frac{x_2^4}{4}+\ldots\right]_0^{x_1} \\
    =& \frac{1}{x_1}\left(x_1-\frac{x_1^2}{2}+\frac{x_1^3}{3}-\frac{x_1^4}{4}+\ldots\right) \\
    =& 1-\frac{x_1}{2}+\frac{x_1^2}{3}-\frac{x_1^3}{4}+\ldots \\
\end{align*}

Note that this matches the statement of the inductive hypothesis for $n=2$, concluding our base case. Now suppose the statement holds for some integer $n=k\ge 2$. That means \begin{equation}
\label{IH}
    \int_0^{x_1}\int_0^{x_2}\cdots \int_0^{x_{k-1}} \frac{1}{x_1x_2\cdots x_{k-1}(1+x_k)} dx_kdx_{k-1}\ldots dx_2=1-\frac{x_1}{2^{k-1}}+\frac{x_1^2}{3^{k-1}}-\frac{x_1^3}{4^{k-1}}+\ldots.
\end{equation}

Taking the integral of both sides with respect to $x_1$ over the interval from $0$ to a new variable $x_0$ and multiplying both integrands by $\frac{1}{x_0}$ yields 
\begin{align*}
    &\int_0^{x_0}\int_0^{x_1}\int_0^{x_2}\cdots \int_0^{x_{k-1}} \frac{1}{x_0x_1\cdots x_{k-1}(1+x_k)} dx_kdx_{k-1}\ldots dx_2dx_1\\=&\frac{1}{x_0}\int_0^{x_0} 1-\frac{x_1}{2^{k-1}}+\frac{x_1^2}{3^{k-1}}-\frac{x_1^3}{4^{n-1}}+\ldots dx_1 \\=&\frac{1}{x_0}\left[x_1-\frac{x_1^2}{2^k} +\frac{x_1^3}{3^k}-\frac{x_1^4}{4^k}+\ldots\right]^{x_0}_{x_1=0} \\=&1-\frac{x_0}{2^k} +\frac{x_0^2}{3^k}-\frac{x_0^3}{4^k}+\cdots
\end{align*}

This matches our inductive hypothesis for $n=k+1$ if we relabel each $x_i$ with $x_{i+1}$. Thus the inductive hypothesis holds for all integers $\ge 2$.

Now taking the integral of both sides of Equation \eqref{IH} as $x_1$ ranges over the unit interval, we get that for any $n\ge 2$,
\begin{align*}
&\int_0^1\int_0^{x_1}\int_0^{x_2}\cdots \int_0^{x_{n-1}} \frac{1}{x_1x_2\cdots x_{n-1}(1+x_n)} dx_ndx_{n-1}\cdots dx_2dx_1 \\  
&=\int_0^1 1-\frac{x_1}{2^{n-1}}+\frac{x_1^2}{3^{n-1}}-\frac{x_1^3}{4^{n-1}}+\cdots dx_1 \\
&=\left[x_1-\frac{x_1^2}{2^n}+\frac{x_1^3}{3^n}-\frac{x_1^4}{4^n} +\cdots \right]^1_{x_1=0} \\
&= 1-\frac{1}{2^n}+\frac{1}{3^n}-\frac{1}{4^n}+\cdots \\
&=\zeta(n)-2\zeta_{even}(n) \\
&=\zeta(n)-2\frac{1}{2^n}\zeta(n) \\
&=\frac{2^{n-1}-1}{2^{n-1}}\zeta(n).
\end{align*}

By this and Equation \eqref{C}, we conclude that 
\[C_n=\frac{2^{n-1}-1}{2^{n-1}}\zeta(n),\]
completing the proof.

\end{proof}

\section{The slow triangle map}\label{slow version}
Our interest in the triangle map stems from three streams: finding algebraic numbers (the Hermite Problems section \ref{algebratiticity}), dynamical properties, and its links to partition numbers \cite{{BBDGI}, BG}.   It is the third stream that requires the use of the slow triangle map, which is why we have this section in this paper. 

\subsection{Definition of the slow triangle map}

We again start with
  $$\triangle = \{ (x_1, \ldots, x_n) \in \R^n:1\geq x_1 \cdots \geq x_n\geq 0\}.$$
We now partition this simplex into two sub-simplices:
  
 \begin{eqnarray*}\triangle_0 &=&   \{ {\bf x}\in \triangle: 1<x_1 + x_n   \} \\
 \triangle_1&=&  \{ {\bf x}\in \triangle: x_1+x_n<1 \} 
 \end{eqnarray*}
 
 \begin{defn}\label{triangle map} The slow triangle map
 $$t:\triangle \rightarrow \triangle$$ 
 is defined as 
\begin{eqnarray*}
t({\bf x}) &=& \left\{ \begin{array}{cc}  t_0({\bf x})  & \mbox{if} \; {\bf x} \in \triangle_0 \\
t_1({\bf x})  & \mbox{if} \; {\bf x} \in \triangle_1\end{array}  \right.\\
&=& \left\{ \begin{array}{cc}  \left( \frac{x_2}{x_1}, \ldots, \frac{x_n}{x_1}, \frac{1-x_1}{x_1} \right) & \mbox{if} \; {\bf x} \in \triangle_0 \\
\left( \frac{x_2}{1-x_n}, \ldots, \frac{x_n}{1-x_n}  \right)  & \mbox{if} \; {\bf x} \in \triangle_1\end{array}  \right.\
    \end{eqnarray*}

 \end{defn}
  
 When $n=1$, this is the classical Farey map.

 \begin{defn} The  slow triangle sequence  for any  ${\bf x}\in \triangle$ is a sequence 
 $$(i_0,i_1, \ldots)$$
 of  integers $i_k$, each of which is zero or one,  such that 
 $${\bf x }\in \triangle_{i_0}, t({\bf x}) \in \triangle_{i_1}, t^{(2)}({\bf x}) \in \triangle_{i_2}, \ldots .$$
 \end{defn}

 There is an easy way to pass from the slow triangle sequence for any ${\bf x}\in \triangle$ to the (fast) triangle sequence given in Definition \ref{triangle sequence}.
 For any ${\bf x}\in \triangle_b$, we have 
 $$T({ \bf x}) = t_0\circ t_1^{(b)} ({\bf x}).$$
 Thus to go from a slow triangle sequence  $(i_0,i_1, \ldots)$
to the (fast) triangle sequence we simply need to concatenate the $1$'s and add one to the number of $1$'s  Thus a slow triangle sequence 
$$(1,1,0,1,1,1,0,0,0,1,1,0, \ldots )$$
is the same as the fast triangle sequence
$$(3, 4, 1,1,3,\ldots).$$

\subsection{The invariant measure for the slow case}

As in Section \ref{fast invariant measure}, there is an invariant for the map $t$. As the arguments are similar to that earlier section, we will only sketch the results.

\begin{prop} The measure
$$\mathrm{d} \nu = \frac{1}{x_1x_2\cdots x_n}\mathrm{d} \lambda.$$
is an invariant measure for the slow triangle map $t$. Hence for all measurable sets $A$ in $\triangle$, we have 
$$\nu(A) = \int_A \mathrm{d} \nu = \int_{t^{(-1)} A} \mathrm{d} \nu = \nu(t^{(-1)} (A)).$$

\end{prop}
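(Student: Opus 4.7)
The plan is to mirror the transfer-operator proof of Theorem 9.1 (the fast case), this time verifying that $f(x_1,\ldots,x_n) = 1/(x_1x_2\cdots x_n)$ is a fixed point of the transfer operator $\mathcal{L}_t$ associated with the slow map $t$. Invariance of $d\nu = f\,d\lambda$ then follows immediately from the defining property of the transfer operator.

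The first step is to write down the two inverse branches of $t$. Lifting to the homogeneous cone $\triangle^H$, each of $t_0, t_1$ is realized as an invertible integer linear map (the shift move $(x_0,\ldots,x_n)\mapsto(x_1,\ldots,x_n,x_0-x_1)$ and the subtraction move $(x_0,\ldots,x_n)\mapsto(x_0-x_n,x_1,\ldots,x_n)$, whose composition $t_0^H\circ(t_1^H)^b$ recovers the fast branch $T_b$), so the inverse of each is just a matrix inverse, which one then ``dehomogenizes'' exactly as in Section 8. This gives
$$t_0^{-1}(y_1,\ldots,y_n)=\left(\frac{1}{1+y_n},\frac{y_1}{1+y_n},\ldots,\frac{y_{n-1}}{1+y_n}\right),\quad t_1^{-1}(y_1,\ldots,y_n)=\left(\frac{y_1}{1+y_n},\ldots,\frac{y_n}{1+y_n}\right).$$
A short cofactor expansion of the nearly-triangular derivative matrices shows that in each case the Jacobian determinant has absolute value $(1+y_n)^{-(n+1)}$.

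The second step is the verification. Every interior $y\in\triangle$ has exactly one preimage in $\triangle_0$ and one in $\triangle_1$, so $\mathcal{L}_t f(y)$ is a sum of just two terms. Substituting $f=1/\prod x_i$ and using that the coordinates of $t_0^{-1}(y)$ multiply to $y_1\cdots y_{n-1}/(1+y_n)^n$ while those of $t_1^{-1}(y)$ multiply to $y_1\cdots y_n/(1+y_n)^n$, one obtains
$$\mathcal{L}_t f(y)=\frac{1}{y_1\cdots y_{n-1}(1+y_n)}+\frac{1}{y_1\cdots y_n(1+y_n)}=\frac{y_n+1}{y_1\cdots y_n(1+y_n)}=\frac{1}{y_1\cdots y_n}=f(y),$$
which is the desired identity $\mathcal{L}_t f=f$.

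The only real point of care is keeping track of the two Jacobian determinants and checking that the $(n-1)$-fold and $n$-fold products in the two branches combine cleanly to give $y_n+1$ in the numerator. Unlike the fast case treated in Section 8, there is no infinite sum to telescope and no combinatorial identities are needed, because the slow map has only two branches; this is consistent with the paper's remark that only a sketch of the result is required.
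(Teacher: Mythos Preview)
Your proposal is correct and follows essentially the same argument as the paper: compute the two inverse branches $t_0^{-1},t_1^{-1}$, observe that each has Jacobian $(1+y_n)^{-(n+1)}$, and verify directly that $f(y)=1/(y_1\cdots y_n)$ satisfies $\mathcal{L}_t f=f$ via the two-term sum $\frac{1}{y_1\cdots y_{n-1}(1+y_n)}+\frac{1}{y_1\cdots y_n(1+y_n)}=\frac{1}{y_1\cdots y_n}$. Your added remarks on the homogeneous lift and on why no telescoping is needed are accurate elaborations of points the paper leaves implicit.
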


We need to show that the function $1/(x_1\cdots x_n)$ is an eigenfunction with eigenvalue one of the transfer operator 
\[\mathcal L f(x_1, \ldots, x_n) = |\text{det} J_0|f(T_0^{-1}(x_1,\ldots, x_n))+|\text{det}J_1|f(T_1^{-1}(x_1,\ldots, x_n)).\]

By direct calculation, we have 
\begin{eqnarray*}
\mathcal L f({\bf x}) &=& \frac{1}{(1+x_n)^{n+1}}f(T_0^{-1}({\bf x}))+\frac{1}{(1+x_n)^{n+1}}f(T_1^{-1}({\bf x})).\\
&=& \frac{1}{(1+x_n)^{n+1}}  f\left( \frac{1}{1+x_n},  \frac{x_1}{1+x_n}, \ldots , \frac{x_{n-1}}{1+x_n}\right)   \\
&& + \frac{1}{(1+x_n)^{n+1}}  f\left( \frac{x_1}{1+x_n},  \frac{x_2}{1+x_n}, \ldots , \frac{x_{n}}{1+x_n}\right)
\end{eqnarray*}

\begin{proof}
Testing $f(x_1, \ldots, x_n)=\frac{1}{x_1x_2\cdots x_n}$, we get 

\begin{align*}
    \mathcal L &f(x_1, \ldots, x_n) \\
    & = \frac{1}{(1+x_n)^{n+1}}\left[f \left( \frac{1}{1+x_n}, \frac{x_1}{1+x_n}, \ldots, \frac{x_{n-1}}{1+x_n} \right) + f \left( \frac{x_1}{1+x_n}, \frac{x_2}{1+x_n}, \ldots, \frac{x_n}{1+x_n} \right) \right]  \\
    & = \frac{1}{(1+x_n)^{n+1}}\left[ \frac{(1+x_n)^{n}}{x_1x_2\cdots x_{n-1}}
    + \frac{(1+x_n)^{n}}{x_1x_2\cdots x_n} \right] \\
    &=\frac{1}{(1+x_n)} \left[\frac{1}{x_1x_2\cdots x_n}\left(x_n+1\right) \right] \\
    &= \frac{1}{x_1x_2\cdots x_n} \\
    &= f(x_1, \ldots, x_n)
\end{align*}

\end{proof}

We cannot find the normalizing constant for this measure, as it is infinite on $\triangle_1$.  This is reflected, for example, in the $n=1$ case (the Farey map), which has an indifferent fixed point at the origin. Similarly, the origin is an indifferent fixed point for the slow triangle map in any dimension, and that the invariant measure is infinite on all of the $\triangle_1$. This should lead to many interesting technical problems and issues.

\subsection{ The induced system}
 Our goal is to show that the slow triangle map $t:\triangle \rightarrow  \triangle$ is ergodic for any dimension.  No doubt we could prove this directly, mimicking all the work that we did earlier.  But instead we will show how the ergodicity of the triangle map $T$ will imply the ergodicity of the map $t$.  We will overwhelminlgy be following the argument given in Chapter 2 of Kesseb\"{o}mer, Munday and Stratmann \cite{Kes}, where they show that the ergodicity of the Gauss map (the $n=1$ triangle map) implies the ergodicity of the Farey map (the $n=1$ slow triangle map).

By Definition 2.2.12 \cite{Kes}, the set 
$\triangle_0 =  \{ {\bf x}\in \triangle: 1<x_1 + x_n   \}$ will be a {\it sweep-out} set if 
$\nu(\triangle_0)<\infty$ and 
$$\nu(\triangle - \cup_{k=0}^{\infty} T^{(-k)} (\triangle_0)) =0.$$

\begin{lem} The set $\triangle_0$ is a sweep-out set.
  
\end{lem}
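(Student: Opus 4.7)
The plan is to verify both defining properties of a sweep-out set: finiteness $\nu(\triangle_0)<\infty$ and the orbit condition $\nu\bigl(\triangle\setminus\bigcup_{k\ge 0}t^{-k}(\triangle_0)\bigr)=0$ (reading the $T$ in the statement as the induced-from map $t$, which is what the standard sweep-out notion refers to).

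For finiteness, my plan is to exploit the identity of sets $\triangle_0=\triangle_{b=0}$ (both are $\{x_1+x_n>1\}$), which forces $t_0$ on $\triangle_0$ to coincide with the fast map $T$ on its $b=0$ cell; in particular $t_0\colon\triangle_0\to\triangle$ is a bijection. I would then change variables by $\mathbf{u}=t_0(\mathbf{x})$, which has inverse $(u_1,\ldots,u_n)\mapsto\bigl(1/(1+u_n),\,u_1/(1+u_n),\ldots,u_{n-1}/(1+u_n)\bigr)$ and Jacobian $(1+u_n)^{-(n+1)}$. A short computation shows that this transforms the slow density $1/(x_1\cdots x_n)$ into exactly the fast density $1/\bigl(u_1\cdots u_{n-1}(1+u_n)\bigr)$. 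Hence $\nu(\triangle_0)=C_n$, which is $\log 2$ if $n=1$ and $(2^{n-1}-1)\zeta(n)/2^{n-1}$ if $n\ge 2$ by Theorem~\ref{zeta thm}; in every dimension this is finite.

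For the orbit condition, the key step is to establish the inclusion $\triangle\setminus\bigcup_{k\ge 0}t^{-k}(\triangle_0)\subseteq\{x_n=0\}$, which is a face of $\triangle$ and so has Lebesgue, hence $\nu$, measure zero. My plan is to linearize $t_1^{(k)}$ using the homogeneous description from Subsection~\ref{Homogeneous Triangle Map}: $t_1$ acts as the shift $(x_0,x_1,\ldots,x_n)\mapsto(x_0-x_n,x_1,\ldots,x_n)$, so iterating on $(1,x_1,\ldots,x_n)$ and dehomogenizing yields $t_1^{(k)}(\mathbf{x})=\bigl(x_1/(1-kx_n),\ldots,x_n/(1-kx_n)\bigr)$ as long as $1-kx_n>0$. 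The iterate then satisfies $(x_1+x_n)/(1-kx_n)>1$ as soon as $k>(1-x_1)/x_n-1$, so if $x_n>0$ we land in $\triangle_0$ at $k^{\ast}=\lfloor(1-x_1)/x_n\rfloor$. This is simply the relation $T=t_0\circ t_1^{(b)}$ read in reverse.

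The principal obstacle is the finiteness piece: what saves it is the clean change-of-variables identity converting the slow density on $\triangle_0$ into the fast density on $\triangle$, collapsing $\nu(\triangle_0)$ to the normalizing constant computed in Theorem~\ref{zeta thm}. This is genuinely the substantive issue, since $\nu$ is infinite on $\triangle_1$ (the indifferent fixed point at the origin). The orbit step is then essentially just the explicit linearization of iterated $t_1$ together with the observation that the exceptional face $\{x_n=0\}$ is Lebesgue-null.
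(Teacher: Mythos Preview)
Your proposal is correct. Both parts are sound: the change of variables $\mathbf{u}=t_0(\mathbf{x})$ really does carry the slow density on $\triangle_0$ to the fast density on $\triangle$, so $\nu(\triangle_0)=C_n$; and your forward-iterate computation $t_1^{(k)}(\mathbf{x})=\bigl(x_1/(1-kx_n),\ldots,x_n/(1-kx_n)\bigr)$ correctly shows that any point with $x_n>0$ eventually enters $\triangle_0$ under $t$.

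The paper's proof differs in both pieces. For finiteness the paper simply asserts ``it is a calculation'' that the integral is finite, without details; your change-of-variables argument is more informative, since it identifies $\nu(\triangle_0)$ exactly with the normalizing constant $C_n$ of Theorem~\ref{zeta thm} and anticipates the measure-theoretic isomorphism $\psi=t_0$ used later in the section. For the orbit condition the paper takes the dual viewpoint: rather than iterating $t_1$ forward on a generic point, it iterates $t_1^{-1}(x_1,\ldots,x_n)=\bigl(x_1/(1+x_n),\ldots,x_n/(1+x_n)\bigr)$ backward on the vertices of $\triangle_0$, tracking how $t_1^{-k}(\triangle_0)$ sweeps down toward the origin as $k\to\infty$. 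Your forward computation and the paper's vertex-tracking are two sides of the same linearized picture; yours arguably makes the inclusion $\triangle\setminus\bigcup_k t^{-k}(\triangle_0)\subseteq\{x_n=0\}$ more immediately visible, while the paper's makes the geometry of the partition $\{t_1^{-k}(\triangle_0)\}_{k\ge 0}$ of $\triangle$ more explicit.
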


\begin{proof} It is a calculation that 
$$\nu(\triangle_0) = \int_{\triangle_0} \frac{1}{x_1x_2\cdots x_n}\mathrm{d}x_1 \cdots \mathrm{d}x_n$$
is finite.

Now, is straightforward to show that both $t_0:\triangle \rightarrow \triangle $  and $t_1:\triangle \rightarrow \triangle $  are  onto maps.  By direct calculation, we have that 
$$t_1^{(-1)} (x_1, \ldots, x_n ) = \left( \frac{x_1}{1+x_n}, \ldots, \frac{x_n}{1+x_n} \right).$$
We know that the vertices of $\triangle_0$ are 
\begin{eqnarray*}
    v_1&=& (1, 0, \ldots, 0) \\
    v_2 &=& (1, 1, 0, \ldots, 0) \\
    &\vdots& \\
    v_{n-1} &=& (1, 1, \ldots, 1, 0)
\end{eqnarray*}
    and 
\begin{eqnarray*}
    w_1&=& (1/2, 1/2, \ldots, 1/2) \\
    w_2 &=& (1, 1, \ldots, 1)
\end{eqnarray*}

We have that $t_1^{(-1)}$ acting on any of the vertices $v_i$ leaves $v_i$ alone.  But 
\begin{eqnarray*}
    t_1^{(-1)}(w_1)&=& (1/3, 1/3, \ldots, 1/3) \\
   t_1^{(-1)}(w_2) &=& (1/2, 1/2, \ldots, 1/2)
\end{eqnarray*}
In general, $t_1^{(-k)}(v_i) = v_i$ and 
\begin{eqnarray*}
    t_1^{(-k)}(w_1)&=& (1/(k+2), 1/(k+2), \ldots, 1/(k+2) \\
   t_1^{(-k)}(w_2) &=& (1/(k+1), 1/(k+1), \ldots, 1/(k+1))
\end{eqnarray*}
This shows that $\triangle_0$ is a sweep-out set.
    
\end{proof}

Definition of 2.4.25 \cite{Kes} in the context of the slow triangle map  leads to
\begin{defn} The induced map of the slow triangle map $t$  on $\triangle_0$  

$$t_{\triangle_0}: \triangle_0 \rightarrow \triangle_0 $$ is  
\[t_{\triangle_0}(x):=t^{\varphi(x)}(x).\]
where in turn 
$$\varphi(x)=\inf\{k\ge 1: n^k(x) \in \triangle_0\}.$$
\end{defn}

With the eventual goal of ergodicity for $t$, we have 
\begin{prop} If $t$ is both conservative and non-singular, then $t:\triangle \rightarrow \triangle$ is ergodic if and only if the induced map $t_{\triangle_0}:\triangle_0 \rightarrow \triangle_0$ is ergodic.
\end{prop}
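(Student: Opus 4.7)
The plan is to prove the classical induced-map correspondence: if $t$ is conservative and non-singular and $\triangle_0$ is a sweep-out set, then $t$-invariant subsets of $\triangle$ are in bijection (mod $\nu$) with $t_{\triangle_0}$-invariant subsets of $\triangle_0$, whence the ergodicity equivalence follows. The preliminary step is to note that the sweep-out property $\nu\bigl(\triangle\setminus\bigcup_k t^{-k}(\triangle_0)\bigr)=0$ together with conservativity of $t$ forces $\varphi(x)<\infty$ for $\nu$-a.e.\ $x\in\triangle_0$ and, moreover, makes the $t$-orbit of $\nu$-a.e.\ $x\in\triangle$ visit $\triangle_0$ infinitely often; this is what makes $t_{\triangle_0}$ well-defined on a set of full $\nu$-measure.

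For the direction that ergodicity of $t$ implies ergodicity of $t_{\triangle_0}$, I would start with a measurable $A\subseteq\triangle_0$ with $t_{\triangle_0}^{-1}(A)=A$ mod $\nu$ and build its $t$-saturation
\[
\widehat A \;=\; \bigcup_{k\geq 0}\bigl\{x\in\triangle : t^j(x)\notin\triangle_0 \text{ for } 0\leq j<k,\; t^k(x)\in A\bigr\},
\]
i.e.\ the set of points whose first visit to $\triangle_0$ lands in $A$. A short case analysis (split on whether $x\in\triangle_0$ or not) combined with non-singularity and the $t_{\triangle_0}$-invariance of $A$ shows $t^{-1}(\widehat A)=\widehat A$ mod $\nu$, so ergodicity of $t$ forces $\widehat A$ to be null or co-null; intersecting with $\triangle_0$ recovers $A$ (mod $\nu$) and gives the dichotomy for $A$.

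For the converse, given $B\subseteq\triangle$ with $t^{-1}(B)=B$ mod $\nu$, the set $B\cap\triangle_0$ is $t_{\triangle_0}$-invariant because $t_{\triangle_0}=t^{\varphi}$ and $B$ is $t$-invariant. By the hypothesized ergodicity of $t_{\triangle_0}$, either $\nu(B\cap\triangle_0)=0$ or $\nu(\triangle_0\setminus B)=0$. In the first case, the sweep-out identity and $t$-invariance of $B$ give
\[
B \;=\; B\cap\bigcup_{k\geq 0} t^{-k}(\triangle_0) \;=\; \bigcup_{k\geq 0} t^{-k}(B\cap\triangle_0) \quad\text{mod } \nu,
\]
and each summand is null by non-singularity, forcing $\nu(B)=0$; the second case follows by applying the same argument to $\triangle\setminus B$.

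The main obstacle is the careful bookkeeping of exceptional $\nu$-null sets in the construction of $\widehat A$, in particular verifying both $t^{-1}(\widehat A)=\widehat A$ and $\widehat A\cap\triangle_0=A$ mod $\nu$. Once those two identities are in hand the rest is formal. This is essentially the content of Theorem 2.4.27 and Proposition 2.4.26 of \cite{Kes}; we may quote those results once we check the standing hypotheses: non-singularity of $t$ is immediate from the explicit piecewise-smooth form of $t_0$ and $t_1$ (each branch is a diffeomorphism onto its image), and conservativity follows from the finiteness of $\nu$ on the sweep-out set $\triangle_0$ together with Maharam's recurrence theorem. Ergodicity of the slow triangle map $t$ then follows directly, since we have already established in Section \ref{ergodic proof} that the fast triangle map $T$ — which coincides with $t_0\circ t_1^{(b)}$ on the subcell $\triangle_b$, and hence, after the natural identification, with the induced map $t_{\triangle_0}$ — is ergodic.
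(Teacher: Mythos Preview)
Your proposal is correct. The paper's own ``proof'' is a single sentence citing Proposition~2.4.28 of \cite{Kes}, with no further argument; you instead give a self-contained sketch of exactly that classical induced-transformation correspondence (saturating an invariant subset of $\triangle_0$ to a $t$-invariant subset of $\triangle$, and conversely restricting), and then also cite \cite{Kes}. So the mathematical content is the same, but you supply the details the paper omits. One remark: your final paragraph goes beyond the proposition as stated---verifying non-singularity and conservativity, and then deducing ergodicity of $t$ from ergodicity of $T$. The paper does all of this too, but in the surrounding text rather than inside the proof of this proposition; in particular, the identification of $t_{\triangle_0}$ with $T$ is handled there via an explicit isomorphism $\psi=t_0:\triangle_0\to\triangle$ (their Proposition on measure-theoretic isomorphism), which is somewhat cleaner than your parenthetical ``natural identification.''
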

This is simply Proposition 2.4.28 in \cite{Kes}.

Now, the triangle map $t$ is non-singular (Definition 2.2.4 \cite{Kes}) if for all measurable sets $A$  with measure zero we have that the measure of $t^{(-1)}(A)$ also has measure zero. But this follows from the existence of the invariant measure $\nu$. 

The map $t$ is conservative follows from $t_{\triangle_0}$ being conservative, which follows immediately from Proposition 2.2.23 \cite{Kes}, and from Proposition 2.4.27 \cite{Kes}.

\subsection{Ergodicity of induced map}

Thus we have to show that $t_{\triangle_0}:\triangle_0 \rightarrow \triangle_0$ is ergodic.

We need to link the induced system to the original triangle map $T$, which we know to be ergodic.

\begin{prop} The dynamical system
$$t_{\triangle_0}:\triangle_0 \rightarrow \triangle_0$$
with invariant measure $\nu|_{\triangle_0}$
 is measure-theoretically isomorphic to the dynamical system 
 $$T:\triangle \rightarrow \triangle$$
 with invariant measure $\mu.$

\end{prop}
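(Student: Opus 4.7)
The plan is to construct an explicit measure-theoretic isomorphism $\phi:\triangle_0\to\triangle$ by taking $\phi:=t_0|_{\triangle_0}$, then verifying that $\phi$ is an a.e.\ bijection, that it conjugates $t_{\triangle_0}$ with $T$, and that it pushes $\nu|_{\triangle_0}$ forward to $\mu$.

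First I would check that $\phi$ is a bijection modulo null sets by writing down its inverse explicitly. Given $(y_1,\ldots,y_n)\in\triangle$, the unique preimage in $\triangle_0$ is
\[\phi^{-1}(y_1,\ldots,y_n)=\left(\tfrac{1}{1+y_n},\tfrac{y_1}{1+y_n},\tfrac{y_2}{1+y_n},\ldots,\tfrac{y_{n-1}}{1+y_n}\right).\]
One verifies $\phi\circ\phi^{-1}=\mathrm{id}$ and $\phi^{-1}\circ\phi=\mathrm{id}$ by direct substitution, and membership in $\triangle_0$ reduces to $x_1+x_n=(1+y_{n-1})/(1+y_n)>1$, which holds in the interior of $\triangle$.

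Second I would establish the conjugation identity $T\circ\phi=\phi\circ t_{\triangle_0}$. The key observation is that applying $t_1$ to a point in the fast cell $\triangle_c$ with $c\ge 1$ produces a point in $\triangle_{c-1}$, as can be checked directly from the formula for $t_1$ (or, even more cleanly, from the homogeneous matrix picture, where $t_1^H$ acts as $(x_0,x_1,\ldots,x_n)\mapsto(x_0-x_n,x_1,\ldots,x_n)$). Thus, given $x\in\triangle_0$, if $b\ge 0$ is the fast index with $t_0(x)\in\triangle_b$, then $t_1^{(j)}(t_0(x))\in\triangle_{b-j}$ for $0\le j\le b$, so the first-return time of $x$ to $\triangle_0$ is $\varphi(x)=b+1$ and the induced map satisfies $t_{\triangle_0}(x)=t_1^{(b)}(t_0(x))$. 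Applying the already-established factorization $T=t_0\circ t_1^{(b)}$ on $\triangle_b$ to the point $t_0(x)$ gives
\[T(\phi(x))=T(t_0(x))=t_0(t_1^{(b)}(t_0(x)))=t_0(t_{\triangle_0}(x))=\phi(t_{\triangle_0}(x)).\]

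Third I would verify $\phi_*(\nu|_{\triangle_0})=\mu$ by a Jacobian computation. Expanding $\det J_{\phi^{-1}}$ along its first row (whose only nonzero entry is $\partial x_1/\partial y_n=-1/(1+y_n)^2$) leaves an $(n-1)\times(n-1)$ diagonal minor with entries $1/(1+y_n)$, yielding $|\det J_{\phi^{-1}}(y)|=1/(1+y_n)^{n+1}$. Since $x_1 x_2\cdots x_n=y_1\cdots y_{n-1}/(1+y_n)^n$ at $x=\phi^{-1}(y)$, the change-of-variables formula gives
\[\frac{1}{x_1\cdots x_n}\cdot|\det J_{\phi^{-1}}(y)|=\frac{(1+y_n)^n}{y_1\cdots y_{n-1}}\cdot\frac{1}{(1+y_n)^{n+1}}=\frac{1}{y_1\cdots y_{n-1}(1+y_n)},\]
which is precisely the density of $\mu$ identified in Theorem~\ref{Invariant measure}. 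The main obstacle in this plan is the bookkeeping of step two: one needs to track carefully how the slow-map orbit threads through the fast cells $\triangle_b$ in order to pin down the return-time formula $t_{\triangle_0}(x)=t_1^{(b)}(t_0(x))$. Once that identity is in hand, the remaining conjugation and measure-transport computations are routine.
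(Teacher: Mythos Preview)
Your proposal is correct and follows the same approach as the paper: both take the isomorphism to be $t_0$ restricted to $\triangle_0$. The paper's proof simply declares $\psi=t_0$ and defers the verification to Proposition 2.4.31 of \cite{Kes} (the $n=1$ case), whereas you have written out the three verifications---bijectivity, conjugation, and measure transport---explicitly and correctly.
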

This will immediately give us that the slow triangle map is ergodic.

As to measure theoretically isomophic, this means the following.  We  have to construct a map 
$$\psi:\triangle_0 \rightarrow \triangle$$
so that the diagram

$$\begin{array}{ccc}
\triangle_0 & \xrightarrow{t_{\triangle_0}} & \triangle_0 \\
\psi \downarrow & & \downarrow \psi \\
\triangle & \xrightarrow{T} & \triangle 
\end{array}$$
is commutative and so that, for all measurable sets $B$ in $\triangle,$ we have 
$$\nu|_{\triangle_0}\circ \psi^{-1}(B) = \mu(B).$$

\begin{proof} Simply define 
$$\psi = t_0.$$
Then the argument is the same as that given in the proof of Proposition 2.4.31 in \cite{Kes}, which is for the $n=1$ case. 
\end{proof}
---------------------------------------------
----------------------------------------------
\section{On our generalization of the triangle map}\label{generalization}

The triangle map in Section \ref{Non-homgeneous Triangle Map} agrees with the Gauss map for $n=1$ and with the map in \cite{Gar} for $n=2$ (and also up to conjugacy with the map in \cite{Mes}.  It is not the generalization in higher dimensions that was proposed in \cite{Gar} and briefly discussed in \cite{Mes}.  But that earlier generalization was not the ``right'' one, for at least two reasons.

First, the triangle map in this paper has a natural interplay with a slow version, as seen in Section \ref{slow version}.  As a good multi-dimensional continued fraction algorithm should be a generalized continued fraction algorithm, there should be such a natural interplay between fast and slow versions.

Second, the slow version in Section \ref{slow version} is precisely the version that seems to be particularly compatible with the study of partition numbers \cite{BBDGI, BG}. Much of our motivation for trying to understand the dynamics of this triangle in higher dimensions is in an attempt to link its dynamics with partition theory, work that is ongoing.

\section{Conclusion and Questions}\label{Conclusions}
We have shown that the triangle map (both the fast and the slow version) is ergodic in every dimension.  Key is in showing that we have weak convergence almost everywhere, as shown in Section \ref{almost everywhere weak convergence}. 

There are of course many questions left.  For example, once we have ergodicity, what other stronger  dynamical properties does the triangle map have, such as mixing.  

In Section \ref{fast invariant measure} we found the  transfer operator for the triangle map in each dimension.  There is a rich tradition and many wonderful papers studying the spectrum the transfer operator for the Gauss map \cite{Hensley, I-K, Kes, Sch, Baladi00}.  In \cite{Gar3}, the spectrum of the transfer operator for the $n=2$ triangle map was studied.  More needs to be done, as little is known about the spectrum of the higher dimensional triangle maps.

As mentioned in the introduction, a large part of our motivation for this paper is the link between the triangle map and integer partitions.  It appears that the dynamics of the triangle map gives us information about integer partitons, and symmetries in integer partitions tell us information about the triangle map.  This is being explored in \cite{Fox-Garrity}.

We have shown that the cells $C(b_1, b_2, \ldots)$ converge to points almost everywhere.  But we need the phrase ``almost everywhere.''  While in the classical case of the Gauss map ($n=1$), every cell does converge to an isolated point, this is not the case for $n=2$.  In \cite{Ass}, an explicit description of the growth rate of the $b_1, b_2, \ldots$ is given for when the cells converge not to a point but to a line segment.  What happens in higher dimensions?  We strongly conjecture that in the dimension $n$ case, the cells could converge to any $k$-simplex, for $0\leq \leq n-1.$ Further, what are the conditions needed on the triangle sequence  $b_1, b_2, \ldots$ to  have the corresponding cell converge to a $k$-simplex?

\newpage

\end{document}